 \newtheorem{thm}{Theorem}[section]
 \newtheorem{lem}[thm]{Lemma}
 \newtheorem{prop}[thm]{Proposition}
 \theoremstyle{definition}
 \newtheorem{defn}[thm]{Definition}
 \theoremstyle{remark}
 \newtheorem{rem}[thm]{Remark}
 \numberwithin{equation}{section}
\begin{document}

%-------------------------------------------------------------------------
% editorial commands: to be inserted by the editorial office
%
%\firstpage{1} \volume{228} \Copyrightyear{2004} \DOI{003-0001}
%
%
%\seriesextra{Just an add-on}
%\seriesextraline{This is the Concrete Title of this Book\br H.E. R and S.T.C. W, Eds.}
%
% for journals:
%
%\firstpage{1}
%\issuenumber{1}
%\Volumeandyear{1 (2004)}
%\Copyrightyear{2004}
%\DOI{003-xxxx-y}
%\Signet
%\commby{inhouse}
%\submitted{March 14, 2003}
%\received{March 16, 2000}
%\revised{June 1, 2000}
%\accepted{July 22, 2000}
%
%
%
%---------------------------------------------------------------------------
%Insert here the title, affiliations and abstract:
%

\title{A multi-scale limit of a  randomly forced rotating  $3$-D compressible fluid}

%----------Author 1
\author{Prince Romeo Mensah}

\address{%
School of mathematical and computer science\\
Department of Mathematics\\
Heriot--Watt University\\
EH14 4AS\\
Edinburgh, U.K.}

\email{pm27@hw.ac.uk}

\thanks{The author acknowledges the financial support of the Department of Mathematics, Heriot--Watt University, through the James--Watt scholarship. }
%----------Author 2
%\author{A Second Author}
%\address{The address of\br
%the second author\br
%sitting somewhere\br
%in the world}
%\email{dont@know.who.knows}
%----------classification, keywords, date
\subjclass{Primary 35R60, 35Q35; Secondary  76M45, 76N99}

\keywords{Stochastic compressible fluid, Navier--Stokes--Coriolis, Martingale solution, Mach number, Rossby number, Froude number}

\date{\today}
%----------additions
%\dedicatory{To my boss}
%%% ----------------------------------------------------------------------

\begin{abstract}
We study  a singular limit of a scaled compressible Navier--Stokes--Coriolis system driven by both a deterministic and stochastic forcing terms in three dimensions. If the Mach number is comparable to the Froude number with both proportional to say $\varepsilon\ll 1$, whereas the Rossby number scales like $\varepsilon^m$ for $m>1$ large, then we show that any family of weak martingale solution to the $3$-D randomly  forced rotating compressible equation (under the influence of a deterministic centrifugal force) converges in probability, as $\varepsilon\rightarrow0$, to the $2$-D incompressible  Navier--Stokes system with a corresponding random forcing term. 
\end{abstract}

%%% ----------------------------------------------------------------------
\maketitle
%%% ----------------------------------------------------------------------
%\tableofcontents

\section{Introduction}
Our aim is to study the following singular limit problem for rotating fluids
\begin{equation}
\begin{aligned}
\label{comprSPDE0}
\mathrm{d}\varrho + \mathrm{div}(\varrho \mathbf{u})\mathrm{d}t &= 0,
 \\
\mathrm{d}(\varrho \mathbf{u}) + \Big[\mathrm{div}(\varrho  &\mathbf{u}\otimes \mathbf{u})  +\frac{1}{\mathrm{Ro}}\varrho(\mathbf{e}_3 \times\mathbf{u})   + \frac{1}{\mathrm{Ma}^2}\nabla p(\varrho) \Big]\mathrm{d}t
\\
& = 
\mathrm{div} \,\mathbb{S}(\nabla\mathbf{u}) \,\mathrm{d}t 
+
\frac{1}{\mathrm{Fr}^2}\varrho\nabla G
+
 \Phi(\varrho,\varrho \mathbf{u})\mathrm{d}W
\end{aligned}
\end{equation}
where the density $\varrho$ and velocity vector field $\mathbf{u}$ takes its values from the space $\mathcal{O}=\mathbb{R}^2\times(0,1)$. The term  $\frac{1}{\mathrm{Ro}}\varrho(\mathbf{e}_3 \times\mathbf{u})$ in \eqref{comprSPDE0} above accounts for rotation  in the fluid due to Coriolis forces  and the factor $\frac{1}{\mathrm{Ro}}$ - which is the reciprocal of the Rossby number - measures the intensity or the speed of this rotation.  $\mathbf{e}_3=(0,0,1)$ is the unit vector in the vertical $x_3$-direction. The centrifugal force term is essentially of the form $\nabla G\approx \nabla(\vert x_1\vert^2+\vert x_2\vert^2)$ with $(x_1,x_2)\in \mathbb{R}^2$ and with $\frac{1}{\mathrm{Fr}^2}$ - the squared reciprocal of the  Froude number - quantifying the level of stratification in the fluid. Here, $p(\varrho)=\varrho^\gamma$ with $\gamma>\frac{3}{2}$ is the isentropic pressure, $\mathrm{Ma}$ is the Mach number 
 and the viscous stress tensor is
\begin{align}
 \mathbb{S}(\nabla\mathbf{u}) :=  \nu\left(\nabla \mathbf{u} + \nabla^T\mathbf{u} - \frac{2}{3}\mathrm{div}\,\mathbf{u}\mathbb{I} \right) 
 %+ \lambda\,\mathrm{div}\,\mathbf{u}\mathbb{I}
\end{align}  
with viscosity coefficient satisfying $\nu>0$
%, $ \lambda \geq 0$
. 
As would be made clear in Section \ref{subsec:energyineq}, we intend to use a version of Korn's inequality and as such, it is of technical importance to omit the bulk part of the viscous stress tensor. 

A prototype for the stochastic forcing term will be 
\begin{equation}
\label{diffCoeciffients}\Phi(\varrho,\varrho \mathbf{u})\,\mathrm{d}W \approx \varrho \,\mathrm{d}W^1 +\varrho \mathbf{u} \,\mathrm{d}W^2
\end{equation}
for a pair of identically distributed independent Wiener processes $W^1$ and $W^2$.
We give the precise assumptions on the noise term in Section \ref{sec:noiseAssump}.

If we set the Rossby number $\mathrm{Ro}=\varepsilon$, the Froude number $\mathrm{Fr}=\varepsilon$ and the Mach number $\mathrm{Ma}=\varepsilon^m$ for some $m\gg 1$, then given a sequence $(\varrho_\varepsilon, \mathbf{u}_\varepsilon)$  of \textit{weak martingale solutions} to \eqref{comprSPDE0} (see Definition \ref{def:martSolution} for the precise definition), we show that its limit   $\mathbf{U}=[\mathbf{U}_h(x_1,x_2),0]$, solves the 2-dimensional Navier--Stokes system
\begin{equation}
\begin{aligned}
\label{2dIncom}
\mathrm{d}\mathbf{U}_h +\left[ \mathrm{div}_h(\mathbf{U}_h\otimes\mathbf{U}_h)  + \nabla_h \pi- \nu\Delta_h\mathbf{U}_h\right] \,\mathrm{d}t = \mathcal{P} \Phi(1,\mathbf{U}_h) \, \mathrm{d}W, 
\\
\mathrm{div}_h\mathbf{U}_h=0.
\end{aligned}
\end{equation}
Here, $\pi$ is an associated pressure term, $\mathcal{P} $ represents Helmholtz decomposition onto solenoidal  vector fields and the subscript $h$ which stands for `horizontal', represents the first two component of a $3$-D vector. The precise statement of this result is given in Theorem \ref{thm:mainRo}.

The convergence leading to \eqref{2dIncom} is obtained in probability which is stronger than the convergence in law obtained in the purely incompressible limit result studied in  \cite{breit2015incompressible} on the torus and \cite[Theorem 2]{mensah2016existence} in $\mathbb{R}^3$. The additional Coriolis term, in a sense, can therefore be seen to have some regularizing effect on the system. This is a consequence of the uniqueness result which is available for the 2-dimensional system \eqref{2dIncom}. We state this uniqueness result in Theorem \ref{thm:2duniqueness}.

In the deterministic setting, the analysis of incompressible rotating fluids have been studied by several authors. For an extensive review and introduction on the topic, the reader might want to see \cite{chemin2006mathematical}. However, important contributions include the works of Babin et al \cite{babin1999global, babin20013d} where they study certain class of solutions to the  system using amongst other techniques,  the Littlewood--Paley dyadic decomposition and further tools from algebraic geometry.

More recent work include \cite{ngo2016dispersive}, where the authors study a deterministic homogeneous compressible inviscid system on the whole space $\mathbb{R}^3$. They analyse the system under fast rotations with isotropic scale corresponding to when $\mathrm{Ro}=\mathrm{Ma}=\varepsilon$. This involved decomposing the system into a linear part  and non-linear part. Using Strichartz-type estimates, they establish the convergence to zero for the linear part. The non-linear part is then analysed using bootstrapping methods and some harmonic analysis tools including paradifferential calculus.

In \cite{feireisl2012a}, the authors study the deterministic counterpart of this limit problem with $\mathrm{Ro}=\mathrm{Ma}=\varepsilon\rightarrow0$ and with no centrifugal force effect. Using the so called RAGE theorem, they established the convergence to zero of the acoustic energy. The subsequent limit system is then given as a \textit{stream function} for the incompressible $2$-D Navier--Stokes system.

For a more general scalings of the form $\mathrm{Ro}=\varepsilon$, $\mathrm{Ma}=\varepsilon^m$ where $m\geq 1$ and $\mathrm{Fr}=\varepsilon$, which is more in line with what we study in this paper, the authors in \cite{feireisl2012multi} then study the limit problem under the influence of centrifugal force. If $m=1$, they obtain a $2$-D linear system with radially symmetric solutions whereas the multi-scale limit problem corresponding to $m\gg1$ converges to the $2$-D Navier--Stokes system. In this later case, the choice of $m$ subsequently eliminates the effect due to the centrifugal force.

There is very little results for  stochastic problems involving rotation. In \cite{flandoli2012stochastic}, they study averaging results for the $3$-D stochastic incompressible Navier--Stokes equation under fast rotation on a periodic domain. Here, an additive white noise is considered and the limit variables solves the so-called $3$-D stochastic resonant averaged equation.

As far as we can tell, there are no available results for compressible rotating fluids with stochastic forcing. Indeed, apart from the low-Mach number result in
\cite{breit2015incompressible, mensah2016existence}, the other result pertaining to such singular limits, we believe, is contained in \cite{breit2015compressible}. In \cite{breit2015compressible}, the combined effect of the low Mach number regime and the high Reynolds number $\mathrm{Re}$ is studied on a torus. From a $3$-D stochastic compressible Navier--Stokes equation (without rotation), they obtain in the limit, a $3$-D stochastic incompressible Euler equation.

We now give a brief outline of the paper. First of all, unless otherwise stated, the assumptions that we make in Section \ref{sec:prelim} will apply throughout the paper. 
We also define in that section, the various concept of solutions in Section \ref{subsec:solution}, as well as state the main result in Section \ref{subsec:main}.

We will then devote the entirety of Section \ref{sec:proofofmain} and Section \ref{sec:mainThmContin} to the proof  of our main Theorem \ref{thm:mainRo}. Our compactness arguments in Section \ref{subsec:compactness} will start by first establishing uniform estimates in Section \ref{subsec:energyineq}. Obtaining such uniform bounds will rely on the relative energy inequality introduced in \cite{breit2015compressible} in the context of stochastic compressible fluids.
We then show in Lemma \ref{lem:tighness}, tightness of the joint law on the path space defined on uniformly bounded sequences obtained in Section \ref{subsec:energyineq}. Finally, we conclude the section by using the Jakubowski--Skorokhod theorem \cite{jakubowski1998short} to establish almost sure convergent subsequence in the topology of the path space mentioned above.

Section \ref{subsec:limits} through to Section \ref{sec:convectiveTerm} will involve the justification of the limit system by treating the most important terms separately. A crucial part of  the analysis involves the corresponding acoustic wave equation which we study  in Section \ref{subsec:acoustic}. In this regard, the proof of the crucial result, Lemma \ref{lem:uniformGrad}, will rely on Fourier analysis, semigroup theory and regularization to obtain the mild form of the acoustic system. We then use Strichartz-type estimates to obtain uniform bounds for the (rescaled) gradient part of momentum. By scaling back, we eventually show that this part of the momentum vanishes in the limit. We then follow this by showing in Section \ref{sec:verticalSolenoidal} that the vertical average of the solenoidal part of momentum converges in the limit, to the full velocity.

In Section \ref{sec:Coriolis} we show that by considering the vertical averages, one can conclude that the Coriolis term is a gradient vector field and thus weakly solenoidal. Also, as mentioned in the previous paragraph, we study in Section \ref{sec:verticalSolenoidal},  just the vertical average of the solenoidal part of momentum. This leads us to justify in Section \ref{sec:oscillatoryMomentum} that any residual or oscillatory term obtained after the taking of vertical averages does not contribute to the limit system.  We will then devote Section \ref{sec:convectiveTerm}  to the proof of Lemma \ref{lem:convectiveConvergence} which identifies the limit in the  non-linear convective term.

Finally, we complete the proof of Theorem \ref{thm:mainRo} in Section \ref{subsec:pathwise} by using the unique (pathwise) solvability of the limit problem \eqref{2dIncom}.  The main tool is based on the recent result by Breit et al \cite[Theorem 2.10.3]{breit2017stoch} that  extends the original Gy\"ongy--Krylov's characterization  of  convergence in probability on Polish spaces \cite{gyongy1996existence} to  quasi-Polish spaces (this includes Banach spaces with weak topology). Having established convergence in law in Section \ref{sec:proofofmain} and with 2-D uniqueness Theorem \ref{thm:2duniqueness} in hand, we gain convergence in probability to the limit problem.

\section{Preliminaries}
\label{sec:prelim}
\subsection{Notations and definitions}
Let us start with a few notations. We set  $Q_T=(0,T)\times \mathcal{O}$ for fixed $T>0$ and consider the following microscopic state variables. For $x=(x_1,x_2,x_3)\in \mathcal{O}$, we let $x_h=(x_1,x_2)\in\mathbb{R}^2$ represent its first two or `horizontal' component  and with the third or `vertical' component  $x_3\in(0,1)$. We now define on $\mathcal{O}$, the macroscopic state variables $\varrho=\varrho(t,x)$ and $\mathbf{u}=\mathbf{u}(t,x)$ which are respectively, a non-negative scaler  and a three dimensional Euclidean vector valued functions representing the density and velocity fields. The vector valued function $(\varrho\mathbf{u})=(\varrho\mathbf{u})(t,x)$ represents the momentum.
%
%
%
%Let us start with a few notations. We set  $Q_T=(0,T)\times \mathcal{O}$
%and let $x_h=(x_1,x_2)\in\mathbb{R}^2$ represent the first two or `horizontal' component of the $3$-D vector $x$ and with the third or `vertical' component given by $x_3\in(0,1)$. We then consider matrices $\mathbf{v}\in\mathbb{R}^{3\times3}$ defined on $\mathcal{O}$ with spatial coordinates of the form $\mathbf{v}:= \mathbf{v}(x_h,x_3)=[\mathbf{v}_h,v_3]=[\mathbf{v}_h(x_h,x_3),v_3(x_h,x_3)] \in\mathbb{R}^{3\times 3}$ where $\mathbf{v}_h\in\mathbb{R}^{2\times 3}$ is again, the horizontal component of $\mathbf{v}$. However, we shall always consider these as vector-valued functions $\mathbf{v}$ on $\mathcal{O}$ with $\mathbf{v}_h\in\mathbb{R}^2$. That is, we reduce the independent variables to scalars. 

We shall reserve the following short-hand notation $\Vert \cdot \Vert_{L^p_{x}}$ for the globally defined norms  on the whole space $L^p(\mathbb{R}^3)$. In this case, the integral over the whole space $\mathbb{R}^3$ is to be understood as the extension by zeroes outside of $\mathcal{O}$ whenever the function is only defined on $\mathcal{O}$.   An extension of this notation will be $\Vert \cdot \Vert_{L^p_{t,x}}$ or $\Vert \cdot \Vert_{L^q_\omega L^p_{t,x}}$ which will refer to the norms on $L^p((0,T)\times\mathbb{R}^3)$ and $L^q(\Omega ;L^p(0,T)\times\mathbb{R}^3))$ respectively, as well as similar variants. However, integrals over proper subsets of $\mathbb{R}^3$ will be made explicit. For example, we shall write $\Vert \cdot \Vert_{L^p(K)}$ for the usual Lebesgue norm whenever $K\subset\mathbb{R}^3$. Finally, we write $a \lesssim b$ and $a \lesssim_p b$ if there exists respectively,  generic constants $c>0$ and $c(p)>0$ such that $a \leq c b$ and $a \leq c(p) b$ holds respectively.

\subsection{Assumptions on the stochastic force}
\label{sec:noiseAssump}
Throughout this paper,  we assume that $(\Omega,\mathscr{F},(\mathscr{F}_t)_{t\geq0},\mathbb{P})$ is a stochastic basis with a complete right-continuous filtration $(\mathscr{F}_t)_{t\geq0}$.
$W$ is a $(\mathscr{F}_t)$-cylindrical Wiener process, that is, there exist a sequence of mutually independent $1$-D Brownian motions $(\beta_k)_{k\in\mathbb{N}}$ and an orthonormal basis $(e_k)_{k\in\mathbb{N}}$ of a separable Hilbert space $\mathfrak{U}$ such that
\begin{align*}
W(t) =  \sum_{k\in\mathbb{N}}\beta_k(t)e_k, \quad t\in[0,T].
\end{align*}
Now set $\mathbf{m}:=\varrho\mathbf{u}$ and assume that there exists a compact set $\mathcal{K} \subset\mathbb{R}^2$ for which we set $K:=\mathcal{K}\times[0,1]\subset \overline{\mathcal{O}}$. We then assume the existence of some $C^1$-functions $\mathbf{g}_k: \mathcal{O}\times \mathbb{R}_+ \times \mathbb{R}^3  \rightarrow \mathbb{R}$ whose decompositions are made up of functions  $\underline{\mathbf{g}}_k: \mathcal{O}\times \mathbb{R}_+  \rightarrow \mathbb{R}$  and  $\alpha_k:=\alpha_k(x) :\mathcal{O} \rightarrow \mathbb{R}$ such that 
\begin{align}
 \mathbf{g}_k(x, \varrho, \mathbf{m})=  \underline{\mathbf{g}}_k(x, \varrho)
+
\alpha_k(x)\, \mathbf{m},  \quad k\in \mathbb{N}.
\end{align}
These coefficients are assumed to satisfy the uniform  bounds
\begin{equation}
\begin{aligned}
\label{stochCoeffBound}
&\sum_{k\in\mathbb{N}}  \left\vert \alpha_k   \right\vert^2   < \infty,
\quad
\sum_{k\in\mathbb{N}}  \left\vert \underline{\mathbf{g}}_k(x, \varrho)   \right\vert^2   \lesssim \varrho^2 ,
\quad
\sum_{k\in\mathbb{N}}  \left\vert\nabla_{\varrho} \,\underline{\mathbf{g}}_k(x, \varrho) \right\vert^2   \lesssim 1.
\end{aligned}
\end{equation}
Then if we define the map $\Phi(\varrho,  \mathbf{m}):\mathfrak{U}\rightarrow    L^1(K)$ by
\begin{align}
\Phi(\varrho,  \mathbf{m}) e_k   = \mathbf{g}_k(\cdot, \varrho(\cdot), \mathbf{m} (\cdot))=  \underline{\mathbf{g}}_k(\cdot, \varrho(\cdot))
+
\alpha_k(\cdot)\, \mathbf{m} (\cdot),
\end{align}
where
\begin{align}
\label{noiseSupport}
\mathrm{spt}(\mathbf{g}_k)\Subset K, \quad \text{for any } k\in\mathbb{N},
\end{align}
we can use   the embedding $L^1(K)\hookrightarrow  W^{-l,2}(K)$ where $l>\frac{3}{2}$, and \eqref{stochCoeffBound}--\eqref{noiseSupport} to show that
\begin{equation}
\begin{aligned}
\label{noiseEst}
&\Vert\Phi(\varrho, \mathbf{m})  \Vert_{L_2(\mathfrak{U};W^{-l,2}(\mathcal{O}))}^2
=\sum_{k\in\mathbb{N}}\Vert \mathbf{g}_k(x, \varrho, \mathbf{m})   \Vert_{W^{-l,2}(\mathcal{O})}^2 
\\&
= \sum_{k\in\mathbb{N}}\Vert \mathbf{g}_k(x, \varrho, \mathbf{m})  \Vert_{W^{-l,2}({K})}^2 
\lesssim \sum_{k\in\mathbb{N}}\Vert \mathbf{g}_k(x, \varrho, \mathbf{m})   \Vert_{L^1(K)}^2 
\\&
\lesssim (\varrho)_{{K}}\,   \int_{{K}} \sum_{k\in\mathbb{N}} \varrho^{-1}\vert \mathbf{g}_k(x, \varrho, \mathbf{m}) \vert^2\, \mathrm{d}x  
\\&
\lesssim (\varrho)_{{K}}\,   \int_{{K}} \sum_{k\in\mathbb{N}} \big(\varrho^{-1}\vert \underline{\mathbf{g}}_k(x, \varrho)\vert^2
+
\vert
\alpha_k\vert^2\,\varrho\,\vert \mathbf{u}   \vert^2\big)\, \mathrm{d}x 
\\&
\lesssim (\varrho)_{{K}}\,  \int_{{K}} \big(1  + \varrho^\gamma + \varrho  \vert \mathbf{u} \vert^2 \big)\mathrm{d}x.
\end{aligned}
\end{equation}
where $(\varrho)_{{K}}$ represents the average density over the compact set ${K}$ and where we have used $\varrho\leq 1+\varrho^\gamma$ in the last step. The left-hand side of \eqref{noiseEst} is therefore uniformly bounded provided  $\varrho  \in  L^\gamma_{\mathrm{loc}}(\mathcal{O})$ and  $\sqrt{\varrho}\mathbf{u} \in L^2_{\mathrm{loc}}(\mathcal{O})$. If so, then the stochastic integral $\int_0^\cdot\Phi(\varrho,\mathbf{m})\mathrm{d}W$ is a well-defined $(\mathscr{F}_t)$-martingale taking value in $W^{-l,2}(\mathcal{O})$.

As already mentioned in the introduction, we expect in the limit, a process that solves the 2-D Navier--Stokes system, Eq. \eqref{2dIncom}. As a result,
we assume in analogy to \eqref{noiseSupport}--\eqref{noiseEst}, a diffusion coefficient of the kind
\begin{align*}
 \Psi( \mathbf{U}_h)e_k  = \mathbf{g}_{k,h}\big(\cdot,  \mathbf{U}_h(\cdot) \big) =   \underline{\mathbf{g}}_{k,h}(\cdot)
+
\alpha_k(\cdot)\, \mathbf{U}_h(\cdot)  
\end{align*} 
with $\Psi(\mathbf{U}_h) = \mathcal{P}\Phi(1,\mathbf{U}_h)$ and coefficients  satisfying the bound
\begin{equation}
\begin{aligned}
\sum_{k\in\mathbb{N}}  \left\vert \alpha_k   \right\vert^2  
+
 \sum_{k\in\mathbb{N}} \left\vert \underline{\mathbf{g}}_{k,h}(x_h, 1)   \right\vert^2   \lesssim 1.
\end{aligned}
\end{equation}
%Here, 
%\begin{align}
%\mathbf{f}_{k,h}(x_h,\mathbf{U}_h)= \int_0^1 \mathbf{f}_k(x,\mathbf{U})\,\mathrm{d}x_3
%\end{align}
%and s
Subsequently, the estimate for the noise term  in Eq. \eqref{2dIncom} becomes:
\begin{equation}
\begin{aligned}
\label{noiseEst2D}
\Vert\Psi( \mathbf{U}_h)  \Vert&_{L_2(\mathfrak{U};W^{-l,2}(\mathbb{R}^2))}^2
\lesssim \sum_{k\in\mathbb{N}}\Vert \underline{\mathbf{g}}_{k,h}(x_h)
+
\alpha_k(x_h)\, \mathbf{U}_h   \Vert_{L^1(\mathcal{K})}^2 
%\\
% &\leq c\,\sum_{k\in\mathbb{N}}\Vert \mathbf{g}_k(x_h,1,  \mathbf{U}_h)  \Vert_{L^1(\mathcal{K})}^2 
\\
&\lesssim \int_{\mathcal{K}} \sum_{k\in\mathbb{N}}\big(\vert \underline{\mathbf{g}}_{k,h}(x_h)\vert^2
+
\vert\alpha_k\, \mathbf{U}_h   \vert^2 \big) \mathrm{d}x
\\
&\lesssim \int_{\mathcal{K}} (1 +  \vert \mathbf{U}_h \vert^2)\mathrm{d}x
\end{aligned}
\end{equation}
where $\mathcal{K}$ is the same compact set hidden in \eqref{noiseSupport} above and where we have used the continuity of the operator $\mathcal{P}$. 

Lastly, we define the auxiliary space $\mathfrak{U}_0 \supset  \mathfrak{U}$ via
\begin{align*}
\mathfrak{U}_0  =\left\{\mathbf{u}= \sum_{k\in\mathbb{N}}c_ke_k\,;\quad  \sum_{k\in\mathbb{N}} \frac{c^2_k}{k^2}<\infty \right\}
\end{align*}
and endow it with the norm
\begin{align*}
\Vert  \mathbf{u}  \Vert^2_{\mathfrak{U}_0}  =  \sum_{k\in\mathbb{N}} \frac{c^2_k}{k^2}, \quad   \mathbf{u}=\sum_{k\in\mathbb{N}}c_ke_k.
\end{align*}
Then it can be shown that $W$ has $\mathbb{P}$-a.s. $C([0,T];\mathfrak{U}_0)$ sample paths with the Hilbert-Schmidt embedding $\mathfrak{U}\hookrightarrow \mathfrak{U}_0$. See \cite{da2014stochastic}.

\subsection{Boundary and far reach conditions}
Since we are working on an semi bounded/unbounded spatial domain, we supplement our system  \eqref{comprSPDE0} with the far reach condition
\begin{align}
\label{boundaryCond}
\varrho\rightarrow\overline{\varrho}_{\varepsilon},\quad \mathbf{u}\rightarrow0 \quad\text{as} \quad \vert x_h\vert\rightarrow\infty, \quad \mathbb{P}\text{-a.s.}
\end{align}
for some time independent function $\overline{\varrho}_{\varepsilon}=\overline{\varrho}_{\varepsilon}(x)>0$ as well as the complete slip boundary condition for the velocity field
\begin{equation}
\begin{aligned}
\label{boundaryCond1}
\mathbf{u}\cdot\mathbf{n}\big\vert_{\partial\mathcal{O}}=\pm \,u_3\big\vert_{\partial\mathcal{O}}=0,
\\ \left(\left[\mathbb{S}(\nabla \mathbf{u})\cdot\mathbf{n}\right]\times\mathbf{n}\right)\big\vert_{\partial\mathcal{O}}
=  \left(S_{23},-S_{13},0\right)\big\vert_{\partial\mathcal{O}}
=0
\end{aligned}
\end{equation}
where $\mathbf{n}=(0,0,\pm1)$ is the outer normal vector to the boundary; so as  to entirely eliminate the influence of boundary effects.
\subsection{The relative energy functional}
We now introduce the relative energy functional which compares `solutions' of \eqref{comprSPDE0} with some smooth functions $r$ and $\mathbf{U}$. Let   start by  first defining the following. For the \textit{isentropic} pressure function $p(z)=z^\gamma$ with $p\in C^1[0,\infty)\cap C^2(0,\infty)$,
%\begin{equation}
%p\in C^1[0,\infty)\cap C^2(0,\infty),\quad p'(\varrho)>0 \text{ for } \varrho>0,\quad\lim_{\varrho\rightarrow\infty}\frac{p'(\varrho)}{\varrho^{\gamma-1}}=c>0,
%\end{equation}
\begin{align}
\label{densityPotential}
P(\varrho)=\varrho \int_1^\varrho z^{\gamma-2}\,\mathrm{d}z
%=\frac{1}{\gamma-1}\big[p(\varrho)-\varrho\big]
\end{align}
represent the corresponding  \textit{pressure potential}. 

Now we assume that the (smooth) functions $r,\mathbf{U}$ are random variables that are adapted to the filtration $(\mathscr{F}_t)_{t\geq0}$ and satisfies:
\begin{align}
r>0, \quad (r-\overline{\varrho}_{\varepsilon})\in C_c^\infty([0,T]\times \overline{\mathcal{O}}), \quad \mathbf{U} \in C_c^\infty([0,T]\times \mathcal{O}),
\end{align}
$\mathbb{P}$-a.s. Additionally, we assume that $\overline{\varrho}_{\varepsilon}=\overline{\varrho}_{\varepsilon}(x)$ solves  the static problem
\begin{align}
\label{staticProblem}
\nabla \,\overline{\varrho}_{\varepsilon}^\gamma =\varepsilon^{2(m-1)}\overline{\varrho}_{\varepsilon}\,\nabla G \quad
 \text{in} \quad \mathcal{O}
\end{align} 
for a  non-negative time independent deterministic force $G$ that satisfy
\begin{equation}
\begin{aligned}
\label{centrifugal}
G=G(x)\geq 0, \quad  G\in W^{1,1}(\mathcal{O}) \cap W^{1,\infty}(\mathcal{O}).
\end{aligned}
\end{equation} 
We now set
\begin{align}
\label{entropyRel}
{H} \left(\varrho,r \right)=P(\varrho) -P'(r)(\varrho - r) -P(r)
\end{align}
and define
\begin{equation}
\begin{aligned}
\label{entropy}
\mathcal{E}&\left(\varrho ,\mathbf{u} \left\vert \right. r, \mathbf{U}  \right) (t,\cdot) 
:=
\int_{\mathcal{O}}\Big[\frac{\varrho}{2} \vert \mathbf{u} - \mathbf{U}\vert^2  
+ \frac{1}{\mathrm{Ma}^2}{H} \left(\varrho,r \right) \Big]\,(t,\cdot)\,\mathrm{d}x
%\\
%&=
%\int_{\mathcal{O}}\Big[\frac{\varrho}{2} \vert \mathbf{u} - \mathbf{U}\vert^2  
%+ \frac{1}{2}\left(\frac{\varrho - r}{\mathrm{Ma}}\right)^2 H''(z) \Big]\,(t,\cdot)\,\mathrm{d}x
\end{aligned}
\end{equation}
%for $z\in[\varrho, r]$ or $z\in[r,\varrho]$ 
to be the \textit{relative energy functional}.
% The  second equality in the relative energy functional is due to Taylor's expansion of  the function $H(\varrho)$ around $r$.

\begin{rem}
By using the  identity  \eqref{densityPotential}, one can easily check that \eqref{staticProblem} is equivalent to solving $\nabla \,P'(\overline{\varrho}_{\varepsilon}) =\varepsilon^{2(m-1)}\,\nabla G$ so that
\begin{align}
\label{staticProblem1}
P'\big(\overline{\varrho}_{\varepsilon}(x)\big) = \varepsilon^{2(m-1)}G(x) + P'(1).
\end{align} 
Since $G$ is non-negative,  it follows from \eqref{staticProblem1} that for any $x\in \mathcal{O}$ and $m>1$,
\begin{align*}
1 \leq \overline{\varrho}_{\varepsilon}(x) <c \quad\text{and} \quad\overline{\varrho}_{\varepsilon}(x)\rightarrow 1
\end{align*} 
as $\varepsilon\rightarrow0$ for some $c>0$. Furthermore, by using the Lipschitz continuity of $G$, we can deduce from \eqref{staticProblem1} that for any $x\in \mathcal{O}$ with $\vert x_h\vert\leq k\varepsilon^{-\alpha}$,  $k>0$ and $0\leq \alpha\leq m-1$, 
\begin{align}
\label{densityAndOne}
\vert \overline{\varrho}_{\varepsilon}(x) -1 \vert \lesssim_k \varepsilon^{2(m-1-\alpha)}.
\end{align}
%with the constant $c>0$ depending only on $k>0$.
\end{rem}
\subsection{Concepts of solution}
\label{subsec:solution}
We now define different notions of solution that will be considered in this paper. We follow the approach of  \cite{Hof} where we now define on our special geometry $\mathcal{O}$, a corresponding solution to \eqref{comprSPDE0} which is weak in both the probabilistic and PDE sense. This is given in Definition \ref{def:martSolution} below. 
\begin{defn}
\label{def:martSolution}
Let $\overline{\varrho}>0$. If $\Lambda$ is a Borel probability measure on $L^\gamma_{\mathrm{loc}}(\mathcal{O})\times L^\frac{2\gamma}{\gamma+1}_{\mathrm{loc}}(\mathcal{O})$, then we say that
$\left[(\Omega,\mathscr{F},(\mathscr{F}_t),\mathbb{P});\varrho, \mathbf{u}, W  \right]$
is a \textit{finite energy weak martingale solution} of equation \eqref{comprSPDE0} with initial law $\Lambda$ provided
\begin{enumerate}
\item $(\Omega,\mathscr{F},(\mathscr{F}_t),\mathbb{P})$ is a stochastic basis with a complete right-continuous filtration,
\item $W$ is a $(\mathscr{F}_t)$-cylindrical Wiener process,
\item the density $\varrho$ satisfies $\varrho\geq 0,$ $t\rightarrow \langle \varrho(t,\cdot), \phi   \rangle  \,\in \,C[0,T]$ for any $\phi  \in C^\infty_c (\mathcal{O})$ $\mathbb{P}$-a.s., the function  $t \mapsto \langle \varrho(t,\cdot), \phi   \rangle$ is progressively measurable, and
\begin{align*}
\mathbb{E}\left[  \sup_{t\in[0,T]}\Vert \varrho(t,\cdot) \Vert^p_{L^{\gamma}(K)}  \right]   <   \infty   
\end{align*}
for all $1\leq p   <\infty$ and for all compact set $K\subseteq {\mathcal{O}}$,
\item the momentum $ \varrho\mathbf{u}$ satisfies $t\rightarrow \langle  \varrho\mathbf{u}, \bm{\phi}  \rangle  \,\in \,C[0,T]$ for any $\bm{\phi} \in C^\infty_c (\mathcal{O})$ $\mathbb{P}$-a.s., the function  $t \mapsto \langle  \varrho\mathbf{u}, \bm{\phi}  \rangle$ is progressively measurable and for all  $1\leq p   <\infty$
\begin{align*}
\mathbb{E}\left[  \sup_{t\in[0,T]}\Vert \varrho\mathbf{u}   \Vert^p_{L^{\frac{2\gamma}{\gamma+1}}(K)}  \right]   <   \infty   ,
\end{align*}
for all compact set $K\subseteq \mathcal{O}$,
\item the velocity field $\mathbf{u}$ is $(\mathscr{F}_t)$-adapted, $\mathbf{u}\in  L^p\left( \Omega; L^2\left(0,T;W^{1,2}_{\mathrm{loc}}(\mathcal{O})  \right)   \right)$ 
and,
\begin{align*}
\mathbb{E}\left[\left( \int_0^T \Vert  \mathbf{u}  \Vert^2_{W^{1,2}(K)} \mathrm{d}t   \right)^p  \right]  <   \infty  
\end{align*}
for all $1\leq p   <\infty$ and for all compact set $K\subseteq \mathcal{O}$,
\item there exists $\mathscr{F}_0$-measurable random variables $(\varrho_0,  \varrho_0\mathbf{u}_0)=(\varrho(0),  \varrho\mathbf{u}(0))$ such that $\Lambda = \mathbb{P}\circ (\varrho_0,  \varrho_0\mathbf{u}_0)^{-1}$,
\item for all $\psi \in C^\infty_c (\mathcal{O})$ and $\bm{\phi}  \in C^\infty_c (\mathcal{O})$ and all $t\in [0,T]$, it holds $\mathbb{P}$-a.s.\footnote{Here and throughout the rest of this paper, we shall use the notations $\langle u,v \rangle=\int_{\mathcal{O}}uv\,\mathrm{d}x$  and $\langle u,v \rangle_h=\int_{\mathbb{R}^2}uv\,\mathrm{d}x_h$  in 3-D and 2-D respectively and where the 2-D version corresponds to the horizontal or first two Cartesian components of the 3-D version.}
\begin{align*}
\langle \varrho&(t)\, ,\, \psi \rangle   =  \langle \varrho_0, \psi \rangle  +  \int_0^t \langle  \varrho\mathbf{u}, \nabla \psi \rangle  \mathrm{d}r,
\\
\langle & \varrho\mathbf{u}(t)\, ,\, \bm{\phi}  \rangle   =  \langle \varrho_0\mathbf{u}_0\, ,\, \bm{\phi}  \rangle  +  \int_0^t \langle  \varrho\mathbf{u}\otimes \mathbf{u}, \nabla \bm{\phi}  \rangle  \mathrm{d}r -  \frac{1}{\mathrm{Ro}}  \int_0^t \langle \varrho(\mathbf{e}_3\times\mathbf{u}) \, ,\, \bm{\phi} \rangle\mathrm{d}r
\\
&-\int_0^t \langle \mathbb{S}(\nabla\mathbf{u}) \, ,\, \mathrm{div}\bm{\phi}  \rangle  \mathrm{d}r    +    \frac{1}{\mathrm{Ma}^2}  \int_0^t \langle p(\varrho)\, ,\, \mathrm{div}\bm{\phi} \rangle  \mathrm{d}r
\\
&+    \frac{1}{\mathrm{Fr}^{2}}  \int_0^t \langle \varrho\,\nabla G\, ,\,  \bm{\phi}  \rangle  \mathrm{d}r
+\int_0^t \langle \Phi(\varrho,\varrho\mathbf{u})\mathrm{d}W\, ,\, \bm{\phi} \rangle  ,
\end{align*}
\item for a.e. $\tau\in (0,T)$, the following inequality
\begin{equation}
\begin{aligned}
\label{relativeEnergy}
 \int_{\mathcal{O}}&\bigg[\frac{\varrho}{2} \vert \mathbf{u} \vert^2  
+ \frac{H(\varrho, \overline{\varrho})}{\mathrm{Ma}^2} \bigg](\tau)\,\mathrm{d}x
+
\int_0^\tau \int_{\mathcal{O}}\mathbb{S}(\nabla \mathbf{u}):  \nabla \mathbf{u} \,\mathrm{d}x  \,\mathrm{d}t
\\
&\leq
\int_{\mathcal{O}}\bigg[\frac{\vert \varrho_0\mathbf{u}_0 \vert^2}{2\varrho_0}  
+ \frac{H(\varrho_0, \overline{\varrho})}{\mathrm{Ma}^2} \bigg]\,\mathrm{d}x
\\&
+\frac{1}{2}\int_0^\tau \,\bigg(\int_{\mathcal{O}}\sum_{k\in\mathbb{N}}\varrho^{-1}\vert\mathbf{g}_k(x,\varrho,\varrho\mathbf{u})\vert^2\,\mathrm{d}x\bigg)\mathrm{d}t
%\\&
%+
%\int_0^\tau\int_{\mathcal{O}}\frac{\varrho}{\mathrm{Fr}^{2}}\nabla G\cdot  \mathbf{u} \,\mathrm{d}x  \,\mathrm{d}t
+M_{R}(\tau)
\end{aligned}
\end{equation}
holds $\mathbb{P}$-a.s. for a real-valued martingale $M_{R}$ given by 
\begin{align}
M_R(\tau)=\int_0^\tau \int_\mathcal{O} \mathbf{u} \cdot \Phi(\varrho,\varrho\mathbf{u}) \, \mathrm{d}x \, \mathrm{d}W
\end{align}
and satisfying the estimate
\begin{align}
\label{realMartingale}
\mathbb{E}\left(\sup_{t\in[0,T]}\vert M_{R}\vert^p \right) \leq c_p\left(1+ \mathbb{E}\left[\int_\mathcal{O}\left(\frac{\vert \varrho_0\mathbf{u}_0\vert^2}{2\varrho_0}+\frac{H(\varrho_0, \overline{\varrho}) }{\mathrm{Ma}^2}   \right)\,\mathrm{d}x\right]^p \right).
\end{align}
for all $p\in[1,\infty)$.
\item In addition, \eqref{comprSPDE0}$_1$ holds in the renormalized sense. That is, for any $\phi\in \mathcal{D}'(\mathcal{O})$ and $b \in C^0[0,\infty)\cap  C^1(0,\infty)$ such that $\vert b'(t) \vert \leq ct^{-\lambda_0}$, $t\in(0,1]$, $\lambda_0<1$ and $\vert b'(t) \vert  \leq ct^{\lambda_1}$, $t\geq1$ where $c>0$ and $-1< \lambda_1<\infty$, we have that
\begin{align}
\label{renormalizedCont}
\mathrm{d}\langle b(\varrho),\phi \rangle = \langle b(\varrho)\mathbf{u},\nabla\phi \rangle\mathrm{d}t  -  \langle \left(b(\varrho) - b'(\varrho)\varrho\right)\mathrm{div}\mathbf{u},  \phi \rangle\mathrm{d}t.
\end{align} 
\end{enumerate}
\end{defn}

\begin{rem}
\label{rem:exist}
Following a similar argument as in the proof of \cite[Theorem 1]{mensah2016existence}, one can establish  on $\mathbb{T}^2_L\times\mathbb{T}_1$  instead of $\mathbb{T}^3_L$, the existence of a finite energy weak martingale solution to \eqref{comprSPDE0} in the sense of Definition \ref{def:martSolution} under the assumption that \eqref{stochCoeffBound} and \eqref{noiseSupport} holds. Here, $\mathbb{T}^p_L = \left([-L,L]\vert_{\{L,L\}}\right)^p$ and $\mathbb{T}_1 = [-1,1]\vert_{\{-1,1\}}$ are the $p$-D and $1$-D  flat tori with periods $2L\geq1$ and $2$ respectively. 
For $L$ fixed, existence of a {finite energy weak martingale solution}  follows from  \cite{Hof}. The aim will then be to pass to limit as $L\rightarrow \infty$ in analogy to the proof of \cite[Theorem 1]{mensah2016existence}. However this will yield a result posed on $\mathcal{O}=\mathbb{R}^2\times\mathbb{T}_1$ instead of the original geometry $\mathcal{O}=\mathbb{R}^2\times(0,1)$. This aforementioned reformulation into a purely periodic problem with a corresponding boundary condition  is allowed  after a special symmetrization of our density and velocity vector fields as given in \cite[Eq. 1.7]{feireisl2012multi}. This was originally proposed in \cite{ebin1983viscous} and for completeness, we state them below.
\begin{equation}
\begin{aligned}
\label{oddAndEven}
\varrho\left( \cdot\,, x_h,-x_3 \right)=\varrho\left( \cdot\,,x_h,x_3 \right),\quad  \mathbf{u}_h\left( \cdot\,,x_h,-x_3 \right)=\mathbf{u}_h\left( \cdot\,,x_h,x_3 \right),\\
-u_3\left( \cdot\,,x_h,-x_3 \right)=u_3\left( \cdot\,,x_h,x_3 \right).
\end{aligned}
\end{equation}
That is, the horizontal component of velocity and density are extended from $(0,1)$ to $\mathbb{T}_1$ as an even function in $x_3$ whereas the vertical component of velocity is extended to an odd function in $x_3$.

In accordance with \eqref{oddAndEven}, the functions $\mathbf{g}_k(x,\varrho,\mathbf{m})$ in \eqref{noiseSupport} are assumed to satisfy 
\begin{equation}
\begin{aligned}
\label{oddAndEvenNoise}
-g_{k,3}\left(x_h,-x_3,\cdot, \mathbf{m}_h,-m_3 \right)=g_{k,3}\left( x_h,x_3,\cdot, \mathbf{m}_h,{m}_3, \right),
\\
\mathbf{g}_{k,h}\left( x_h,-x_3,\cdot, \mathbf{m}_h,-m_3 \right)=\mathbf{g}_{k,h}\left( x_h,x_3,\cdot, \mathbf{m}_h,m_3 \right),
\end{aligned}
\end{equation}
where $g_{k,3}$ and $\mathbf{g}_{k,h}$, agrees correspondingly, to the `vertical' and `horizontal' components of the noise term. Lastly, we extend the potential of the centrifugal force  to $\mathbb{T}_1$ as an even function in $x_3$ , i.e.,
\begin{equation}
\begin{aligned}
\label{centrifugal1}
G(x_h, -x_3)=G(x_h,x_3).
\end{aligned}
\end{equation} 
These symmetric assumptions \eqref{oddAndEven}, \eqref{oddAndEvenNoise} and \eqref{centrifugal1} are thus, implicitly implied throughout the rest of this paper.
\end{rem}

%\begin{rem}
%We also remark that the stronger energy inequality \eqref{relativeEnergy}, in analogy to \cite[Eq. 1.4]{breit2015compressible}, as opposed to  the one stated in \cite{mensah2016existence} is required for the purpose of this paper.
%\end{rem}
%
%\begin{thm}
%\label{thm:dissi}
%Set $\mathrm{Ma}=\varepsilon^m$, for $m>10$ and $\mathrm{Ro}=\mathrm{Fr}=\varepsilon$  in \eqref{comprSPDE0}. Let $\gamma>\frac{3}{2}$ and let $\Lambda$ be a Borel probability law on $L^\gamma(\mathcal{O})\times  L^{\frac{2\gamma}{\gamma+1}}(\mathcal{O}) $ satisfying
%\begin{align*}
%\Lambda  \Big\{ (\varrho, \mathbf{m})\in L^\gamma&\times  L^{\frac{2\gamma}{\gamma+1}}(\mathcal{O})  :   \varrho\geq 0,\,
% M_1\leq \fint_{{K}} \varrho\,\mathrm{d}x\leq M_2,\, \mathbf{m}\vert_{\{\varrho=0\}}=0 \Big\}=1,
%\\
%&\int_{L_x^\gamma\times L_x^{\frac{2\gamma}{\gamma+1}}}  \left\Vert \frac{1}{2}\frac{\vert\mathbf{m}\vert^2}{\varrho}  +\frac{1}{\varepsilon^{2m}} H(\varrho)  \right\Vert^p_{L^1_x}   \mathrm{d}\Lambda(\varrho,  \mathbf{m}) \leq c_{p}<\infty,
%\end{align*}
%for all $1\leq p<\infty$ and any compact set $\mathcal{K}\subset\mathbb{R}^2$ with $K:=\mathcal{K}\times \mathbb{T}_1$ and with constants $0<M_1<M_2$. Also assume that \eqref{noiseSupport}, \eqref{stochCoeffBound}, \eqref{oddAndEven}, \eqref{oddAndEvenNoise} and \eqref{centrifugal1} holds. Then there exists a finite energy weak martingale solution of \eqref{comprSPDE0} in the sense of Definition \ref{def:martSolution}, with initial law $\Lambda$.
%\end{thm}
\begin{rem}
\label{rem:orthogo}
Notice that the term $\varrho(\mathbf{e}_3\times\mathbf{u}) $ is orthogonal to $\mathbf{u}$ and so it vanishes during the compactness argument when we test the momentum equation with the velocity. Also, the centrifugal forcing term is easily controlled by a similar estimate as in \eqref{centForceDissipate} below.
\end{rem}
The nature of the limit system \eqref{2dIncom} naturally leads to a corresponding definition of a solution in $2$-D. Typically, this can either be simultaneously weak in the probabilistic and PDE sense, in analogy to Definition \ref{def:martSolution} above, or strong in at least one of these senses. The former notion is stated in Definition \ref{def:weakSol} below. For the later, which we state in Definition \ref{def:pathSol}, the solutions are weak in the PDE sense but strong in the sense of probability. This follows from uniqueness in 2-D which is currently unavailable for the 3-D counterpart. In this later case, the underlying probability space is fixed in advance. Consequently, existence of solution in the sense of the later yields the former. However, the analysis involved in this paper is such that, both versions are required.
\begin{defn}
\label{def:weakSol}
Let $\Lambda$ be a Borel probability measure on $L^2_{\mathrm{div}}(\mathbb{R}^2)$\footnote{Here and below, $C^\infty_{c,\mathrm{div}}(\mathbb{R}^2) := \{\mathbf{v} \in C^\infty_c(\mathbb{R}^2)  \, : \, \mathrm{div}(\mathbf{v})=0  \}$ and $L^2_{\mathrm{div}}(\mathbb{R}^2)=\overline{C^\infty_{c,\mathrm{div}}(\mathbb{R}^2)}^{\Vert\cdot\Vert_{L^2}}$ and similarly for the Sobolev space. }. Then we say that $[(\Omega,\mathscr{F},(\mathscr{F}_t),\mathbb{P}),\mathbf{u},W]$ is a
\textit{weak martingale solution} of equation \eqref{2dIncom} with initial datum $\Lambda$ provided:
\begin{enumerate}
\item $(\Omega,\mathscr{F},(\mathscr{F}_t),\mathbb{P})$ is a stochastic basis with a complete right-continuous filtration,
\item $W$ is an $(\mathscr{F}_t)$-cylindrical Wiener process,
\item $\mathbf{u}$ is $(\mathscr{F}_t)$-adapted, $\mathbf{u}\in  C_w\left([0,T];L^2_\mathrm{div}(\mathbb{R}^2)  \right)$ $\cap\, L^2(0,T;W^{1,2}_\mathrm{div}(\mathbb{R}^2))$ $\mathbb{P}$-a.s. 
and for all $p\in[1,\infty)$,
\begin{equation}
\begin{aligned}
\label{energy2d}
\mathbb{E}\left[\sup_{t\in(0,T)} \Vert  \mathbf{u}  \Vert^2_{L^2(\mathbb{R}^2)}  \right]^p
+
\mathbb{E}\left[\left( \int_0^T \Vert  \mathbf{u}  \Vert^2_{W^{1,2}(\mathbb{R}^2)} \mathrm{d}t   \right)^p  \right]  <   \infty,
\end{aligned}
\end{equation}
\item $\Lambda = \mathbb{P}\circ (\mathbf{u}(0))^{-1}$,
\item for all $\bm{\phi} \in C^{\infty}_{c,\mathrm{div}} (\mathbb{R}^2)$ and all $t\in [0,T]$, it holds $\mathbb{P}$-a.s.,
\begin{equation}
\begin{aligned}
\label{weak2d}
\langle \mathbf{u}(t)\,,\, \bm{\phi}\rangle_h   &=  \langle \mathbf{u}(0) \,,\,\bm{\phi}\rangle_h -  \int_0^t \langle \mathbf{u}\otimes \mathbf{u}\,,\, \nabla\bm{\phi}\rangle_h  \,  \mathrm{d}r
+\nu \int_0^t\langle\nabla\mathbf{u}\,,\, \bm{\phi}\rangle_h \,  \mathrm{d}r    
\\
&+\int_0^t  \langle\mathcal{P} \Phi(1,  \mathbf{u})\mathrm{d}W\,,\, \bm{\phi}\rangle_h
\end{aligned}
\end{equation}
\end{enumerate}
where $\mathcal{P} $ is the Helmholtz decomposition onto the space of solenoidal vector fields.
\end{defn}
An even stronger notion of solution for the incompressible stochastic Navier--Stokes equation is the concept of  \textit{weak pathwise solution} given below.

\begin{defn}
\label{def:pathSol}
Let 
$\left(\Omega,\mathscr{F},(\mathscr{F}_t),\mathbb{P}\right)$ be a stochastic basis with an $(\mathscr{F}_t)$- cylindrical Wiener process $W$. Let $\mathbf{u}_0$ be an $\mathscr{F}_0$-measurable random variable. Then we say that $\mathbf{u}$ is a
\textit{weak pathwise solution} of the Navier--Stokes equation \eqref{2dIncom} with initial datum $\mathbf{u}_0$ provided:
\begin{enumerate}
\item the velocity $\mathbf{u}$ is $(\mathscr{F}_t)$-adapted, $\mathbf{u}\in  C_w\left([0,T];L^2_\mathrm{div}(\mathbb{R}^2)  \right) \cap $\\$L^2(0,T;W^{1,2}_\mathrm{div}(\mathbb{R}^2))$  $\mathbb{P}$-a.s. 
and for all $p\in[1,\infty)$, \eqref{energy2d} holds true,
%\begin{align*}
%\mathbb{E}\left[\sup_{t\in(0,T)} \Vert  \mathbf{u}  \Vert^2_{L^2(\mathbb{R}^2)}  \right]^p
%+
%\mathbb{E}\left[\left( \int_0^T \Vert  \mathbf{u}  \Vert^2_{W^{1,2}(\mathbb{R}^2)} \mathrm{d}t   \right)^p  \right]  <   \infty,
%\end{align*}
\item the equality $\mathbf{u}(0)\,= \mathbf{u}_0$ holds $\mathbb{P}$-a.s.,
\item for all $\bm{\phi } \in C^{\infty}_{c,\mathrm{div}} (\mathbb{R}^2)$ and all $t\in [0,T]$, Eq. \eqref{weak2d} holds $\mathbb{P}$-a.s.
%\begin{equation}
%\begin{aligned}
%\langle \mathbf{u}(t)\,,\, \phi\rangle_h   &=  \langle \mathbf{u}(0) \,,\, \phi\rangle_h -  \int_0^t \langle \mathbf{u}\otimes \mathbf{u}\,,\, \nabla\phi\rangle_h  \,  \mathrm{d}r
%+\nu \int_0^t\langle\nabla\mathbf{u}\,,\, \phi\rangle_h \,  \mathrm{d}r    
%\\
%&+\int_0^t  \langle\mathcal{P}\Phi(1,  \mathbf{u})\mathrm{d}W\,,\, \phi\rangle_h
%\end{aligned}
%\end{equation}
\end{enumerate}
%where $\mathcal{P}$ is the Helmoltz decomposition onto the space of solenoidal vector fields.
\end{defn}

\begin{thm}
\label{thm:2duniqueness}
Let $(\Omega, \mathscr{F}, (\mathscr{F}_t),\mathbb{P})$ be a stochastic basis with an $(\mathscr{F}_t)$-cylindrical Wiener process $W$ and let $\mathbf{U}_0$ be an $\mathscr{F}_0$-measurable random variable belonging to the space $L^p(\Omega;L^2_{\mathrm{div}}(\mathbb{R}^2))$ for all $p\in[1,\infty)$.
If \eqref{noiseEst2D} holds,
 then there exists a unique weak pathwise solution to \eqref{2dIncom} in the sense of Definition \ref{def:pathSol} with initial condition $\mathbf{U}_0$
\end{thm}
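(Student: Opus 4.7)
The plan is to combine existence of a martingale solution with a pathwise uniqueness argument and then invoke a Yamada--Watanabe type principle to upgrade to a weak pathwise solution. Since the drift and noise have the affine structure inherited from Section \ref{sec:noiseAssump}, the estimate \eqref{noiseEst2D} supplies a linear growth and Lipschitz bound for $\Psi$ on $L^2_{\mathrm{div}}(\mathbb{R}^2)$, so all three steps fit into a standard 2-D Navier--Stokes framework.

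First, to produce a weak martingale solution in the sense of Definition \ref{def:weakSol}, I would work with a Galerkin basis of $L^2_{\mathrm{div}}(\mathbb{R}^2)$ obtained from an exhaustion by bounded smooth domains (or, equivalently, a Faedo--Galerkin scheme based on the Stokes eigenfunctions of an expanding family of balls). The Galerkin system is a finite dimensional SDE with locally Lipschitz coefficients and quadratic nonlinearity; using the projected equation one derives the energy identity for $\|\mathbf{u}^N\|_{L^2}^2$, and Burkholder--Davis--Gundy together with \eqref{noiseEst2D} yields the uniform bound \eqref{energy2d} independent of $N$. Tightness of the laws of $\mathbf{u}^N$ on $L^2(0,T;L^2_{\mathrm{loc}})\cap C_w([0,T];L^2_{\mathrm{div}})$ follows from the Aubin--Lions lemma plus a Kolmogorov-type fractional-time-regularity bound obtained from the equation. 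Jakubowski--Skorokhod then gives an a.s.\ convergent subsequence on a new probability space, and one passes to the limit in \eqref{weak2d} using the 2-D compactness $L^2(0,T;W^{1,2}_{\mathrm{loc}})\cap L^\infty(0,T;L^2) \hookrightarrow L^q(0,T;L^q_{\mathrm{loc}})$ for $q<4$ to handle $\mathbf{u}\otimes\mathbf{u}$, together with the continuity of $\Psi$ ensured by \eqref{noiseEst2D}.

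Next, pathwise uniqueness: fix a stochastic basis and let $\mathbf{u}_1,\mathbf{u}_2$ be two weak pathwise solutions with the same initial datum $\mathbf{U}_0$ and set $\mathbf{v}=\mathbf{u}_1-\mathbf{u}_2$. Applying the Itô formula to $\|\mathbf{v}(t)\|_{L^2(\mathbb{R}^2)}^2$ (after truncation with a stopping time to legitimize the calculation, cf.\ \cite{breit2017stoch}) produces
\begin{align*}
\mathrm{d}\|\mathbf{v}\|_{L^2}^2 + 2\nu\|\nabla \mathbf{v}\|_{L^2}^2\mathrm{d}t
= -2\langle (\mathbf{v}\cdot\nabla_h)\mathbf{u}_2, \mathbf{v}\rangle_h\,\mathrm{d}t
+ \|\Psi(\mathbf{u}_1)-\Psi(\mathbf{u}_2)\|_{L_2(\mathfrak{U};L^2)}^2\,\mathrm{d}t
+ \mathrm{d}M,
\end{align*}
where $M$ is a local martingale and the trilinear term uses that $\mathbf{v}$ is divergence free. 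The key 2-D ingredient is Ladyzhenskaya's inequality $\|\mathbf{v}\|_{L^4}^2 \lesssim \|\mathbf{v}\|_{L^2}\|\nabla\mathbf{v}\|_{L^2}$, giving
\begin{align*}
\bigl|\langle (\mathbf{v}\cdot\nabla_h)\mathbf{u}_2, \mathbf{v}\rangle_h\bigr|
\leq \|\nabla \mathbf{u}_2\|_{L^2}\|\mathbf{v}\|_{L^4}^2
\leq \nu\|\nabla \mathbf{v}\|_{L^2}^2 + c\nu^{-1}\|\nabla \mathbf{u}_2\|_{L^2}^2\|\mathbf{v}\|_{L^2}^2,
\end{align*}
so the dissipation absorbs the bad term. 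The noise term is controlled by $c\|\mathbf{v}\|_{L^2}^2$ using the Lipschitz estimate that follows from \eqref{noiseEst2D} (specifically, from the bound on $\sum_k|\alpha_k|^2$ together with the continuity of $\mathcal{P}$). Since $\int_0^T\|\nabla\mathbf{u}_2\|_{L^2}^2\,\mathrm{d}t<\infty$ $\mathbb{P}$-a.s.\ by \eqref{energy2d}, stochastic Gronwall (applied after localization and followed by BDG to control $\mathbb{E}\sup_t|M|$) yields $\mathbf{v}\equiv 0$, i.e.\ pathwise uniqueness.

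Finally, the combination of existence of a martingale solution and pathwise uniqueness gives, via the Yamada--Watanabe principle in the form stated in \cite[Ch.~2]{breit2017stoch}, the existence of a weak pathwise solution on the prescribed stochastic basis with initial datum $\mathbf{U}_0$. The only delicate point is the truncation used to justify Itô's formula for $\|\mathbf{v}\|_{L^2}^2$ when $\mathbf{v}$ is only known to lie in $C_w([0,T];L^2)\cap L^2(0,T;W^{1,2})$; this is the main technical obstacle and is handled by a regularization in space together with a stopping time controlling $\int_0^{\tau_R}\|\nabla \mathbf{u}_2\|_{L^2}^2\mathrm{d}t\leq R$ and then sending $R\to\infty$.
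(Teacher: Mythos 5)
Your proposal is a correct outline, but the paper does not actually supply a proof: it simply refers to Menaldi--Sritharan (2002), cited in the bibliography, with $\mathfrak{U}=l^2$. That reference establishes existence and pathwise uniqueness for the stochastic 2-D Navier--Stokes equation primarily through a \emph{local monotonicity} (Minty--Browder-type) argument, obtaining a probabilistically strong solution in one pass, without separating the problem into (i) existence of martingale solutions, (ii) pathwise uniqueness, and (iii) a Yamada--Watanabe upgrade. Your route -- Galerkin on an exhaustion of $\mathbb{R}^2$, energy estimates uniform in $N$, Jakubowski--Skorokhod, pathwise uniqueness via It\^{o} on $\|\mathbf{v}\|_{L^2}^2$ with Ladyzhenskaya's inequality and stochastic Gronwall, then Yamada--Watanabe -- is the other classical path to the same conclusion, and it is consistent with the machinery the rest of the paper uses (e.g.\ the Jakubowski--Skorokhod and Gy\"ongy--Krylov framework from \cite{breit2017stoch}). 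Both work; the monotonicity method is more compact and avoids a change of probability space, while your argument is more modular and reuses tools the paper already needs for the singular limit.

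Two points worth flagging in your sketch. First, the Lipschitz property of $\Psi=\mathcal{P}\Phi(1,\cdot)$ follows from the affine structure and \eqref{stochCoeffBound}, but you should be explicit that what is needed is $\sum_k\|\alpha_k\|_{L^\infty_x}^2<\infty$ (so that $\sum_k\|\alpha_k(\mathbf{u}-\mathbf{v})\|_{L^2}^2\lesssim\|\mathbf{u}-\mathbf{v}\|_{L^2}^2$), which is the reading the paper intends for \eqref{stochCoeffBound}; \eqref{noiseEst2D} itself only gives linear growth, not Lipschitz continuity. Second, you correctly identify the legitimacy of It\^{o}'s formula for $\|\mathbf{v}\|_{L^2}^2$ at the regularity $C_w([0,T];L^2)\cap L^2_tW^{1,2}$ as the main technical obstacle; this is precisely where the cited Menaldi--Sritharan reference supplies a clean alternative, since their Gelfand-triple/monotonicity framework gives the It\^{o} energy identity for free (as a consequence of the variational formulation) rather than requiring an ad hoc spatial mollification and stopping-time argument.
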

\begin{proof}
See \cite{menaldi2002stochastic} with $\mathfrak{U}=l^2(\mathbb{R}^2)$.
\end{proof}

%\begin{rem}
%The $L^4\big((0,T)\times \mathcal{O}\big)$ regularity in the  uniqueness and existence result of \cite[Propositions 3.2 and 3.3.]{menaldi2002stochastic} is not needed in Definition \ref{def:pathSol} since in $2$-D, we are able to control this norm in terms of the $L^2\big(0,T;W^{1,2}(\mathcal{O})\big)$ and the $L^\infty\big(0,T;L^2(\mathcal{O})\big)$ norms. cf. \cite[Lemma 2.1]{menaldi2002stochastic} or \cite[Eq. 7.1]{brzezniak2013existence}.
%\end{rem}

\subsection{Main result}
\label{subsec:main}
We now state the main result of this paper. Theorem \ref{thm:mainRo} below corresponds to the simultaneous low Rossby - low Mach - low Froude number limit result of the stochastic compressible Navier--Stokes--Coriolis equation taking into account, the influence of centrifugal force.
\begin{thm}
\label{thm:mainRo}
Set $\mathrm{Ma}=\varepsilon^m$, for $m>10$ and $\mathrm{Ro}=\mathrm{Fr}=\varepsilon$  in \eqref{comprSPDE0}.
Let $\gamma>\frac{3}{2}$ and assume that $\mathbf{U}_0=[\mathbf{U}_{h,0},0]\in L^2(\mathbb{R}^{3})$ is $\mathscr{F}^\varepsilon_0$ measurable. Consider the following initial data $\big(\varrho_{0,\varepsilon}, (\varrho\mathbf{u})_{0,\varepsilon}\big)\in L^\gamma(\mathcal{O}) \times  L^{\frac{2\gamma}{\gamma+1}}(\mathcal{O}) $ satisfying
\begin{equation}
\begin{aligned}
&
\varrho_{0,\varepsilon} = \overline{\varrho}_{\varepsilon}+\varepsilon^m\varrho_{0,\varepsilon}^{(1)}>0, \quad
\, \big\{ \sqrt{\overline{\varrho}_{\varepsilon}}\,\mathbf{u}_{0,\varepsilon}\big\}_{\varepsilon>0}\in L^2(\mathcal{O}),\,
\\&
\big\{\overline{\varrho}_{\varepsilon}^\frac{\gamma-2}{2} \varrho_{0,\varepsilon}^{(1)}\big\}_{\varepsilon>0}\in  L^2(\mathcal{O}),
 \quad
\big\{ \varrho_{0,\varepsilon}^{(1)}\big\}_{\varepsilon>0}\in L^\infty(\mathcal{O})\cap L^1(\mathcal{O}),
\\
&
\left\vert(\varrho\mathbf{u})_{0,\varepsilon}-\mathbf{U}_0 \right\vert + \left\vert \varrho_{0,\varepsilon}-\overline{\varrho}_{\varepsilon} \right\vert\leq\varepsilon^m\, M, 
\end{aligned}
\end{equation}
for a constant $M>0$ and for $\overline{\varrho}_{\varepsilon}>0$ solving \eqref{staticProblem}. 
%
%Furthermore for all $p\in[1,\infty)$, let assume that the following moment estimate holds:
%\begin{equation}
%\begin{aligned}
%\label{momentsBounded}
%&\int_{L_x^\gamma\times L_x^{\frac{2\gamma}{\gamma+1}}}  \left\Vert \frac{1}{2}\frac{\vert\mathbf{m}\vert^2}{\varrho}  +\frac{1}{\varepsilon^{2m}}{E} \left(\varrho, \overline{\varrho}_{\varepsilon} \right) \right\Vert^p_{L^1_x}   \mathrm{d}\Lambda_\varepsilon(\varrho,  \mathbf{m}) \leq c_{p}<\infty
%\end{aligned}
%\end{equation}
%uniformly in $\varepsilon$.  
If the collection\\ $\left[(\Omega,\mathscr{F},(\mathscr{F}_t),\mathbb{P});\varrho_\varepsilon, \mathbf{u}_\varepsilon, W  \right]$ is a family of finite energy weak martingale solution of \eqref{comprSPDE0} in the sense of Definition \ref{def:martSolution} with initial law $
\Lambda_\varepsilon=\mathbb{P}\circ [ \varrho_{0,\varepsilon}, (\varrho\mathbf{u})_{0,\varepsilon} ]^{-1}$, $\varepsilon\in(0,1)$
and uniformly bounded moment estimate
\begin{equation}
\begin{aligned}
\label{momentsBounded}
&\int_{L_x^\gamma\times L_x^{\frac{2\gamma}{\gamma+1}}}  \left\Vert \frac{1}{2}\frac{\vert \varrho\mathbf{u}\vert^2}{\varrho}  +\frac{1}{\varepsilon^{2m}}{H} \left(\varrho, \overline{\varrho}_{\varepsilon} \right) \right\Vert^p_{L^1_x}   \mathrm{d}\Lambda_\varepsilon(\varrho,  \varrho\mathbf{u}) \lesssim 1
\end{aligned}
\end{equation}
for all $p\in[1,\infty)$
% and for a constant $c=c(p)>0$ independent of $\varepsilon$, 
then
\begin{equation}
\begin{aligned}
\label{almostSureConv}
\varrho_\varepsilon \rightarrow 1 \quad&\text{in}\quad L^\infty(0,T;L^{\min\{2,\gamma\}}_{\mathrm{loc}}(\mathcal{O})),
\\
\mathbf{u}_\varepsilon \rightarrow \mathbf{U} \quad&\text{in}\quad \big(L^2(0,T;W^{1,2}(\mathcal{O})) ,w\big), 
\end{aligned}
\end{equation}
in probability and where $\mathbf{U}=[\mathbf{U}_h(t,x_h),0]$ is the  unique weak pathwise solution of \eqref{2dIncom} in the sense of Definition \ref{def:pathSol} with the initial condition  $\mathbf{U}_0 = \mathbf{U}_{h,0}(x_h)$.\footnote{For a topological space $X$, we write $(X,w)$ if it is equipped with the weak topology.}
\end{thm}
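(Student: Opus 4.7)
The plan is to proceed in four stages that mirror the section headings of the introduction. First, I would extract uniform-in-$\varepsilon$ estimates from the relative energy inequality \eqref{relativeEnergy} specialized to the reference state $(r,\mathbf{U})=(\overline{\varrho}_\varepsilon,0)$. Because $\overline{\varrho}_\varepsilon$ solves the static problem \eqref{staticProblem}, the deterministic centrifugal contribution cancels against the pressure linearization at the chosen scale, and the Coriolis term is orthogonal to $\mathbf{u}$ by Remark \ref{rem:orthogo}; after taking expectations and using the noise bound \eqref{noiseEst}, the martingale estimate \eqref{realMartingale}, and the hypothesis \eqref{momentsBounded}, a Gronwall argument should yield moment bounds for $\sup_t \mathcal{E}(\varrho_\varepsilon,\mathbf{u}_\varepsilon\,|\,\overline{\varrho}_\varepsilon,0)(t)$ and the viscous dissipation. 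Combined with Korn's inequality under the slip condition \eqref{boundaryCond1}, this gives uniform control of $\sqrt{\varrho_\varepsilon}\mathbf{u}_\varepsilon$ in $L^\infty_tL^2_x$, of $\nabla\mathbf{u}_\varepsilon$ in $L^2_{t,x}$, and of the rescaled density fluctuation $\varrho_\varepsilon^{(1)}:=\varepsilon^{-m}(\varrho_\varepsilon-\overline{\varrho}_\varepsilon)$ in the Orlicz-type norm induced by $H(\cdot,\overline{\varrho}_\varepsilon)$; this together with \eqref{densityAndOne} will force $\varrho_\varepsilon\to 1$.

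Second, I would establish tightness of the joint law of $(\varrho_\varepsilon,\mathbf{u}_\varepsilon,\varrho_\varepsilon\mathbf{u}_\varepsilon,W_\varepsilon)$ on a path space combining the weak topology of $L^2(0,T;W^{1,2}_{\mathrm{loc}}(\mathcal{O}))$ for velocities, the weak-$*$ topology of $L^\infty_tL^\gamma_{\mathrm{loc}}$ for densities, a weak-in-time topology for momenta (using the momentum equation to produce fractional time regularity), and $C([0,T];\mathfrak{U}_0)$ for the noise. An application of the Jakubowski--Skorokhod theorem then produces a new probability space carrying an almost surely convergent subsequence with limit $(1,\mathbf{U},\mathbf{M},W)$, where Fatou together with the Orlicz bound pins the density limit to the constant $1$.

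Third, and this is where the essential analysis lies, I would identify the limit equation. Decomposing $\varrho_\varepsilon\mathbf{u}_\varepsilon=\mathcal{P}(\varrho_\varepsilon\mathbf{u}_\varepsilon)+\mathcal{Q}(\varrho_\varepsilon\mathbf{u}_\varepsilon)$ via Helmholtz, the pair $(\varrho_\varepsilon^{(1)},\mathcal{Q}(\varrho_\varepsilon\mathbf{u}_\varepsilon))$ satisfies to leading order an acoustic wave system with very fast time scale $\varepsilon^{-m}$ perturbed by a Coriolis contribution at the slower scale $\varepsilon^{-1}$; passing to the mild formulation and applying the Strichartz-type estimates of Lemma \ref{lem:uniformGrad}, I would deduce that $\mathcal{Q}(\varrho_\varepsilon\mathbf{u}_\varepsilon)$ vanishes strongly in space-time in the limit, which kills the pressure gradient as well as its mixed contribution to the convective term. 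For the solenoidal piece I would take vertical averages: with $m>10$, testing against horizontal divergence-free test functions independent of $x_3$ causes the Coriolis term $\varepsilon^{-1}\varrho_\varepsilon(\mathbf{e}_3\times\mathbf{u}_\varepsilon)$ to project onto a horizontal gradient, since $(-U_2,U_1)$ is curl-free whenever $\mathrm{div}_h\mathbf{U}_h=0$, so it is absorbed into the limit pressure $\pi$; oscillatory residuals with zero vertical mean vanish after the same test by the Strichartz decay. The convective term is then closed by the compensated-compactness argument of Lemma \ref{lem:convectiveConvergence}, delivering a weak martingale solution of \eqref{2dIncom} with $\mathbf{U}=[\mathbf{U}_h,0]$.

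Fourth, having a weak martingale solution in the sense of Definition \ref{def:weakSol} on the new probability space, I would invoke the pathwise uniqueness Theorem \ref{thm:2duniqueness} together with the Gy\"ongy--Krylov characterization extended to quasi-Polish spaces in \cite[Theorem 2.10.3]{breit2017stoch}: applied to pairs of subsequences on the product space, the resulting joint limit is supported on the diagonal, which transfers back to convergence in probability on the original probability space and concludes \eqref{almostSureConv}. The hard part will be the dispersive step: one must carefully disentangle the fast acoustic scale $\varepsilon^{-m}$ from the Rossby scale $\varepsilon^{-1}$, absorb the $\varepsilon$-dependence of the background $\overline{\varrho}_\varepsilon$ via \eqref{densityAndOne}, and exploit the strong hypothesis $m>10$ to ensure that the Coriolis perturbation of the acoustic semigroup is small enough to be treated perturbatively while the Strichartz decay remains strong enough to close the nonlinear terms after undoing the $\varepsilon^{-m}$ rescaling.
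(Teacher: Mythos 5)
Your plan reproduces the high-level architecture of the paper's proof (relative energy estimates $\rightarrow$ compactness $\rightarrow$ acoustic/Strichartz analysis $\rightarrow$ vertical averages for the Coriolis term $\rightarrow$ Gy\"{o}ngy--Krylov via 2-D uniqueness), and the individual ingredients you name are the right ones. However, there is one genuine gap at the compactness stage that would derail the argument as written. You propose to establish tightness of the law of the \emph{full momentum} $\varrho_\varepsilon\mathbf{u}_\varepsilon$ by ``using the momentum equation to produce fractional time regularity.'' This cannot work: after the cancellation between $\varepsilon^{-2m}\nabla p(\varrho_\varepsilon)$ and $\varepsilon^{-2}\varrho_\varepsilon\nabla G$ via the static problem \eqref{staticProblem}, the momentum equation still contains $\varepsilon^{-m}\gamma\nabla r_\varepsilon$ and $\varepsilon^{-1}(\mathbf{e}_3\times\varrho_\varepsilon\mathbf{u}_\varepsilon)$, both of which blow up in any fixed negative Sobolev norm even though $r_\varepsilon$ and $\varrho_\varepsilon\mathbf{u}_\varepsilon$ are uniformly bounded. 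In fact the gradient part $\mathcal{Q}(\varrho_\varepsilon\mathbf{u}_\varepsilon)$ genuinely oscillates on the time scale $\varepsilon^m$ and does not admit uniform H\"older-in-time control; it is only disposed of by the \emph{dispersive} estimate, not by equicontinuity. What the paper actually proves tight is the vertical average of the solenoidal part $\ulcorner\mathcal{P}(\varrho_\varepsilon\mathbf{u}_\varepsilon)\urcorner$: the projection $\mathcal{P}$ kills the $\varepsilon^{-m}$ pressure gradient, and the vertical average makes the remaining Coriolis contribution a horizontal gradient (Section \ref{sec:Coriolis}), hence invisible to divergence-free test functions. You take vertical averages only at the later identification stage; you should take them already at the tightness stage, otherwise your path space is too rich for the available uniform bounds.

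Two secondary comments. First, the paper applies Jakubowski--Skorokhod to the \emph{joint} law of two copies $(\varrho_{\varepsilon_n},\ldots,\varrho_{\delta_n},\ldots,W)$ from the outset, so that the Gy\"{o}ngy--Krylov criterion can be applied directly to the product; you defer this to the end, which is fine in principle but requires the compactness argument to be set up on the doubled space from the start. Second, the role of $m>10$ is not merely ``Coriolis small enough to treat perturbatively'' — it comes concretely from the $\varepsilon$-dependent cut-offs $\eta_\varepsilon$ of \eqref{newCufOff} needed because $\overline{\varrho}_\varepsilon$ is genuinely $x$-dependent on the unbounded horizontal domain; the admissible range \eqref{alpha} for the cut-off exponent $\alpha$ is non-empty exactly when $m>10$. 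Spelling this out is what makes the Strichartz decay \eqref{semigroup4} close, so it is more than a technical footnote.
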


\begin{rem}
We remark that, one may actually formulate  Theorem \ref{thm:mainRo} above  for the general class of solutions $\left[(\Omega^\varepsilon,\mathscr{F}^\varepsilon,(\mathscr{F}^\varepsilon_t),\mathbb{P}^\varepsilon);\varrho_\varepsilon, \mathbf{u}_\varepsilon, W_\varepsilon  \right]_{\varepsilon>0}$ rather than the restriction to $\left[(\Omega,\mathscr{F}
,(\mathscr{F}_t),\mathbb{P});\varrho_\varepsilon, \mathbf{u}_\varepsilon, W  \right]_{\varepsilon>0}$. However, without loss of generality, it always suffices to consider the latter even when the former is given. This is because the subsequent application of stochastic compactness argument due to Jakubowski \cite{jakubowski1998short} will yield respectively, the existence of some probability spaces or  a probability space. However, whatever the case may be (either the space is plural or singular ), it is shown by Jakubowski that they (it)  can be considered as the \textit{standard probability space} $\big( [0,1], \overline{\mathcal{B}([0,1])}, \mathcal{L}\big)$. So although the theorem will hold for $(\Omega^\varepsilon,\mathscr{F}^\varepsilon,(\mathscr{F}^\varepsilon_t),\mathbb{P}^\varepsilon)$, without loss of generality, it is enough to consider $(\Omega,\mathscr{F}
,(\mathscr{F}_t),\mathbb{P})$. The same loss of generality justifies considering $W$ rather than $W_\varepsilon$ for all $(\varrho_\varrho, \mathbf{u}_\varepsilon)$.
\end{rem}

\section{Uniform estimates and compactness arguments}
\label{sec:proofofmain}
This section is devoted to preparations towards the proof of our main theorem. We start by establishing a dissipative estimate for the energy of the compressible system \eqref{comprSPDE0}.
%
%This section is devoted to preparations towards the proof of our main theorem. Firstly, we remark that we may consider the family
%\begin{align*}
%\left[(\Omega,\mathscr{F}
%,(\mathscr{F}_t),\mathbb{P});\varrho_\varepsilon, \mathbf{u}_\varepsilon, W  \right]_{\varepsilon>0}
%\end{align*}
%rather than  $\left[(\Omega^\varepsilon,\mathscr{F}^\varepsilon,(\mathscr{F}^\varepsilon_t),\mathbb{P}^\varepsilon);\varrho_\varepsilon, \mathbf{u}_\varepsilon, W_\varepsilon  \right]_{\varepsilon>0}$ in Theorem \ref{thm:mainRo} above. This is because of the subsequent application of stochastic compactness argument due to Jakubowski will yield respectively, the existence of a probability space or  some probability spaces. However, whatever the case may be (either the space is singular or plural), it is shown by Jakubowski \cite{jakubowski1998short} that it (they) can be considered as the \textit{standard probability space} $\big( [0,1], \overline{\mathcal{B}([0,1])}, \mathcal{L}\big)$. So although the theorem will hold for $(\Omega^\varepsilon,\mathscr{F}^\varepsilon,(\mathscr{F}^\varepsilon_t),\mathbb{P}^\varepsilon)$, without loss of generality, it is enough to consider just one probability space $(\Omega,\mathscr{F}
%,(\mathscr{F}_t),\mathbb{P})$. The same loss of generality justifies considering one Wiener process $W$ rather than $W_\varepsilon$ for all $(\varrho_\varrho, \mathbf{u}_\varepsilon)$.
%With this clarification, we now proceed to establishing a dissipative estimate for the energy of the compressible system \eqref{comprSPDE0}.
\subsection{Relative energy inequality and uniform bounds}
\label{subsec:energyineq}
Since the collection  $\left[(\Omega,\mathscr{F},(\mathscr{F}_t),\mathbb{P});\varrho_\varepsilon, \mathbf{u}_\varepsilon, W  \right]$ is a sequence of finite energy weak martingale solution of \eqref{comprSPDE0},  we can deduce from \eqref{relativeEnergy} that
\begin{equation}
\begin{aligned}
\label{relativeEnergyEst}
 \int_{\mathcal{O}}&\bigg[\frac{\varrho_\varepsilon}{2} \vert \mathbf{u}_\varepsilon \vert^2  
+ \frac{H(\varrho_\varepsilon, \overline{\varrho}_\varepsilon)}{\varepsilon^{2m}} \bigg](\tau)\,\mathrm{d}x
+
\int_0^\tau \int_{\mathcal{O}}\mathbb{S}(\nabla \mathbf{u}_\varepsilon):  \nabla \mathbf{u}_\varepsilon \,\mathrm{d}x  \,\mathrm{d}t
\\
&\leq
\int_{\mathcal{O}}\bigg[\frac{\vert\varrho_{\varepsilon,0} \mathbf{u}_{\varepsilon,0} \vert^2}{2\varrho_{\varepsilon,0}}  
+ \frac{H(\varrho_{\varepsilon,0}, \overline{\varrho}_\varepsilon)}{\varepsilon^{2m}} \bigg]\,\mathrm{d}x
%+
%\int_0^\tau \int_{\mathcal{O}}\frac{1}{\varepsilon^{2}}\varrho_\varepsilon\nabla G\cdot  \mathbf{u}_\varepsilon \,\mathrm{d}x  \,\mathrm{d}t 
\\&
+\frac{1}{2}\int_0^\tau\int_{\mathcal{O}}\sum_{k\in\mathbb{N}}\varrho_\varepsilon^{-1}\vert\mathbf{g}_k(x,\varrho_\varepsilon,\varrho_\varepsilon\mathbf{u}_\varepsilon)\vert^2\,\mathrm{d}x\,\mathrm{d}t
+M_{R}^\varepsilon(\tau)
\end{aligned}
\end{equation}
holds $\mathbb{P}$-a.s. with $M_{R}^\varepsilon$ given by
\begin{align}
M_R^\varepsilon(\tau)=\int_0^\tau \int_\mathcal{O} \mathbf{u}_\varepsilon \cdot \Phi(\varrho_\varepsilon,\varrho_\varepsilon\mathbf{u}_\varepsilon) \, \mathrm{d}x \, \mathrm{d}W.
\end{align}
By the mass compatibility condition
\begin{align}
\int_\mathcal{O}(\varrho_\varepsilon -\overline{\varrho}_\varepsilon) \, \mathrm{d}x=0,
\end{align}
it follows from \eqref{stochCoeffBound}--\eqref{noiseEst} that
\begin{equation}
\begin{aligned}
\label{noiseToPotential}
\sum_{k\in\mathbb{N}}&
\int_0^\tau \int_{\mathcal{O}}\frac{\varrho^{-1}_\varepsilon}{2}\vert\mathbf{g}_k(x,\varrho_\varepsilon,\varrho_\varepsilon\mathbf{u}_\varepsilon)\vert^2\,\mathrm{d}x\,\mathrm{d}t
\\
&\lesssim 
\int_0^\tau \int_{K}
\frac{1}{2}\big( \varrho_\varepsilon + \varrho_\varepsilon  \vert \mathbf{u}_\varepsilon \vert^2\big)
\mathrm{d}x\,\mathrm{d}t  
\\
&\lesssim 
\int_0^\tau \int_{\mathcal{O}}
\frac{1}{2}  \varrho_\varepsilon  \vert \mathbf{u}_\varepsilon\vert^2\, \mathrm{d}x\,\mathrm{d}t  +\int_0^\tau \int_{\mathcal{O}} {\varrho}_\varepsilon\,
\mathrm{d}x \,\mathrm{d}t 
\\
&\lesssim_\tau
\int_0^\tau \int_{\mathcal{O}}
\frac{1}{2}  \varrho_\varepsilon  \vert \mathbf{u}_\varepsilon\vert^2\, \mathrm{d}x\,\mathrm{d}t  + \int_{\mathcal{O}} \overline{\varrho}_\varepsilon\,
\mathrm{d}x.
%\\
%&
%\leq c\,\bigg(1+
%\int_0^\tau  \int_{\mathcal{O}}\bigg[\frac{\varrho_\varepsilon}{2} \vert \mathbf{u}_\varepsilon \vert^2  
%+ \frac{P(\varrho_\varepsilon)}{\varepsilon^{2m}} \bigg]\,\mathrm{d}x\,\mathrm{d}t  
%+
%\int_0^\tau  \int_{\mathcal{O}}{\varrho_\varepsilon} 
%\,\mathrm{d}x\,\mathrm{d}t
%\bigg) 
%\\
%&
%\leq c\,\bigg(1+
%\int_0^\tau  \int_{\mathcal{O}}\bigg[\frac{\varrho_\varepsilon}{2} \vert \mathbf{u}_\varepsilon \vert^2  
%+ \frac{P(\varrho_\varepsilon)}{\varepsilon^{2m}} \bigg]\,\mathrm{d}x\,\mathrm{d}t  
%+
%\int_0^\tau  \int_{\mathcal{O}}{\overline{\varrho}_\varepsilon} 
%\,\mathrm{d}x\,\mathrm{d}t
%\bigg).
 \end{aligned}
\end{equation}
Finally, we give a formal clarification of the apparent loss of the deterministic forcing term. In order to derive the  estimate \eqref{relativeEnergyEst}, one applies It\^o's formula to the functionals on the left-hand side of it. This is analogous to testing the momentum balance equation with the velocity vector so that in the case of rotating fluids \eqref{comprSPDE0}, one expects that the following term
\begin{equation}
\begin{aligned}
\int_0^\tau \int_{\mathcal{O}}\frac{1}{\varepsilon^{2}}\varrho_\varepsilon\nabla G\cdot  \mathbf{u}_\varepsilon \,\mathrm{d}x  \,\mathrm{d}t
\end{aligned}
\end{equation}
appears. However, assuming that all the terms below are integrable and regular enough, then we note that
\begin{equation}
\begin{aligned}
\label{centForceDissipate}
\int_0^\tau \int_{\mathcal{O}}&\frac{1}{\varepsilon^{2}}\varrho_\varepsilon\nabla G\cdot  \mathbf{u}_\varepsilon \,\mathrm{d}x  \,\mathrm{d}t
=
- \int_{\mathcal{O}}\int_0^\tau\frac{G}{\varepsilon^{2}}\mathrm{div}(\varrho_\varepsilon \mathbf{u}_\varepsilon)   \,\mathrm{d}t\,\mathrm{d}x
\\
&=
  \int_{\mathcal{O}}\frac{P'(\overline{\varrho}_{\varepsilon})}{\varepsilon^{2m}}\big(\varrho_\varepsilon(\tau) - \varrho_{\varepsilon,0}\big)   \,\mathrm{d}x
 \\
&=
  \int_{\mathcal{O}}\frac{P'(\overline{\varrho}_{\varepsilon})}{\varepsilon^{2m}}\big(\varrho_\varepsilon(\tau) -\overline{\varrho}_{\varepsilon}\big)   \,\mathrm{d}x
 -
  \int_{\mathcal{O}}\frac{P'(\overline{\varrho}_{\varepsilon})}{\varepsilon^{2m}} \big( \varrho_{\varepsilon,0} -\overline{\varrho}_{\varepsilon} \big)   \,\mathrm{d}x
\end{aligned}
\end{equation}
where we have used the continuity equation and \eqref{staticProblem1}. However, the right-hand terms in \eqref{centForceDissipate} are precisely, the first order Taylor expansion terms hidden in $H(\varrho_\varepsilon,\overline{\varrho}_\varepsilon)$ and $H(\varrho_{\varepsilon,0},\overline{\varrho}_\varepsilon)$ respectively so that in fact, information given by $G$ is captured in \eqref{relativeEnergyEst}.\\
Moving on,  by applying Gronwall's lemma, we can combine \eqref{relativeEnergyEst}, and \eqref{noiseToPotential} to get
\begin{equation}
\begin{aligned}
\label{relativeEnergyEst1}
&\sup_{t\in(0,T)}\mathcal{E} \left(\varrho_\varepsilon,\mathbf{u}_\varepsilon\left\vert \right. \overline{\varrho}_{\varepsilon}, \mathbf{0}  \right)(t,x)
+
\int_0^T \int_{\mathcal{O}}\mathbb{S}(\nabla \mathbf{u}_\varepsilon): \nabla \mathbf{u}_\varepsilon \,\mathrm{d}x  \,\mathrm{d}t
\\
&
\lesssim
1+\mathcal{E}\left(\varrho_\varepsilon,\mathbf{u}_\varepsilon\left\vert \right. \overline{\varrho}_{\varepsilon}, \mathbf{0}  \right)(0,x)
+\sup_{t\in(0,T)}\vert M_{R}^\varepsilon \vert
\end{aligned}
\end{equation}
$\mathbb{P}$-a.s.  By invoking the estimate \eqref{realMartingale}, we get by taking $p$-th moments in \eqref{relativeEnergyEst1} that
\begin{equation}
\begin{aligned}
\label{relativeEnergyEst2}
&\mathbb{E}\bigg[\sup_{t\in(0,T)}\int_\mathcal{O}
\frac{\varrho_\varepsilon}{2}\vert \mathbf{u}_\varepsilon\vert^2 (t,\cdot)\,\mathrm{d}x\bigg]^p
+
\mathbb{E}\bigg[\sup_{t\in(0,T)}\int_\mathcal{O}
\frac{1}{\varepsilon^{2m}}H(\varrho_\varepsilon,\overline{\varrho}_{\varepsilon}) (t,\cdot)\,\mathrm{d}x\bigg]^p
\\
&+
\mathbb{E}\bigg[\int_0^T \int_{\mathcal{O}}\mathbb{S}(\nabla \mathbf{u}_\varepsilon): \nabla \mathbf{u}_\varepsilon \,\mathrm{d}x  \,\mathrm{d}t\bigg]^p
\lesssim
\mathbb{E}\big[1+\mathcal{E}\left(\varrho_\varepsilon,\mathbf{u}_\varepsilon\left\vert \right. \overline{\varrho}_{\varepsilon}, \mathbf{0}  \right)(0)\big]^p. 
\end{aligned}
\end{equation}
Finally, we observe that for any such $p\in[1,\infty)$, the inequality
\begin{align*}
\mathbb{E}\big[1&+\mathcal{E}\left(\varrho_\varepsilon,\mathbf{u}_\varepsilon\left\vert \right. \overline{\varrho}_{\varepsilon}, \mathbf{0}  \right)(0,x)\big]^p
\\&
\lesssim_p 1+ \mathbb{E}\bigg(\int_{\mathcal{O}}  \bigg[\frac{\vert\varrho_{\varepsilon,0}\mathbf{u}_{\varepsilon,0} \vert^2}{2\varrho_{\varepsilon,0}}  +\frac{1}{\varepsilon^{2m}}{H} \left(\varrho_{\varepsilon,0}, \overline{\varrho}_{\varepsilon} \right) \bigg] \mathrm{d}x\bigg)^p
\\&=
c(p)\Bigg[ 1+ \int_{L_x^\gamma\times L_x^{\frac{2\gamma}{\gamma+1}}}  \left\Vert \frac{1}{2}\frac{\vert\mathbf{m}\vert^2}{\varrho}  +\frac{1}{\varepsilon^{2m}}{H} \left(\varrho, \overline{\varrho}_{\varepsilon} \right) \right\Vert^p_{L^1_x}   \mathrm{d}\Lambda_\varepsilon(\varrho,  \mathbf{m}) \Bigg]
\end{align*}
holds. Subsequently, from the boundedness assumption on the initial law (i.e., the moment estimate \eqref{momentsBounded}), we can  conclude from \eqref{relativeEnergyEst2} that for any $p\in[0,\infty)$,
\begin{equation}
\begin{aligned}
\label{entropyRel1}
\mathbb{E}\left(\sup_{t\in(0,T)}\int_{\mathcal{O}}\frac{1}{\varepsilon^{2m}}\, H(\varrho_\varepsilon,\overline{\varrho}_{\varepsilon})\,\mathrm{d}x\right)^p \, 
\lesssim_p 1
\end{aligned}
\end{equation}
and
 \begin{equation}
\begin{aligned}
\label{uniBounds1}
\mathbb{E}\left(\sup_{t\in(0,T)}\int_{\mathcal{O}}\frac{\varrho_\varepsilon}{2}\vert \mathbf{u}_\varepsilon \vert^2\,\mathrm{d}x\right)^p \, 
\lesssim_p 1,\\
\mathbb{E}\left(\int_{Q_T}\mathbb{S}( \nabla\mathbf{u}_\varepsilon):\nabla\mathbf{u}_\varepsilon\,\mathrm{d}x  \,\mathrm{d}t\,\right)^p \, 
\lesssim_p 1
\end{aligned}
\end{equation}
uniformly in $\varepsilon$. 
\\
In the following, unless otherwise specified, we shall always refer to `balls' as 3-D objects of the form
\begin{align}
\label{specialball}
B_k:= \big\{x\in\mathcal{O} \, :\, \vert x_h\vert\leq k \big\}.
\end{align}
Also, we follow \cite[Page 144]{feireisl2009singular} and define $\mathscr{O}_{\mathrm{ess}}$ and $\mathscr{O}_{\mathrm{res}}$ to be fixed subsets of  $(0,\infty)$ defined by
\begin{align*}
\mathscr{O}_{\mathrm{ess}} &:= \big\{ \varrho\in (0,\infty) \, :\,{\overline{\varrho}}/{2} < \varrho < 2\overline{\varrho}\big\}
\\
&= \big\{ \varrho\in (0,\infty) \, :\, |\varrho - 5\overline{\varrho}/4 |< 3\overline{\varrho}/4 \big\},
\\
\mathscr{O}_{\mathrm{res}} &:=(0,\infty) \setminus \mathscr{O}_{\mathrm{ess}} 
\end{align*}
respectively. Then for fixed $\omega \in \Omega$, we define the measurable subsets of $\Omega \times(0,T) \times \mathcal{O}$ by
\begin{align*}
\mathcal{M}^\varepsilon_{\mathrm{ess}} &:= \big\{ (\omega,t,x) \in \Omega \times(0,T) \times \mathcal{O} \, :\, \varrho_\varepsilon(\omega,t,x) \in \mathscr{O}_{\mathrm{ess}}  \big\},
\\
\mathcal{M}^\varepsilon_{\mathrm{res}} &:= \big( \Omega \times (0,T) \times \mathcal{O} \big) \setminus \mathcal{M}^\varepsilon_{\mathrm{ess}}
\end{align*}
respectively.
Subsequently, the decomposition of an integrable function $h$ on the random time-space cylinder into its \textit{essential} and \textit{residual} parts, i.e., 
\begin{align*}
h=[h]_\mathrm{ess} +[h]_\mathrm{res},\quad\text{where}\quad [h]_\mathrm{ess}= h\mathbbm{1}_{\mathcal{M}^\varepsilon_{\mathrm{ess}} },
%\\
%\zeta\in C^\infty_c(0,\infty),\quad 0\leq \zeta\leq 1,\quad \zeta\equiv 1\text{ in a neighbourhood of }\overline{\varrho}_{\varepsilon},
\end{align*}
holds. Given the above definitions, 
%
%Now by the decomposition of an integrable function $h$ on the random time-space cylinder into its \textit{essential} and \textit{residual} parts, i.e., 
%\begin{align*}
%h=[h]_\mathrm{ess} +[h]_\mathrm{res},\quad\text{where}\quad [h]_\mathrm{ess}= h\cdot\mathbbm{1}_{\{|\varrho_\varepsilon-5\overline{\varrho}_{\varepsilon}/4 |< 3\overline{\varrho}_{\varepsilon}/4\}},
%%\\
%%\zeta\in C^\infty_c(0,\infty),\quad 0\leq \zeta\leq 1,\quad \zeta\equiv 1\text{ in a neighbourhood of }\overline{\varrho}_{\varepsilon},
%\end{align*}
we can use \eqref{entropyRel1} and \cite[Eq. 4.4]{breit2015compressible} to show that for $m>1+\alpha$,
\begin{equation}
\begin{aligned}
\label{uniBounds2}
\mathbb{E}\left(\sup_{t\in(0,T)}\int_{B_{k\varepsilon^{-\alpha}}} \left[\frac{ \varrho_\varepsilon -\overline{\varrho}_{\varepsilon}}{\varepsilon^m}\right]_{\mathrm{ess}}^2\,\mathrm{d}x  \right)^p\, 
\lesssim_{p,k} 1,
\\
\mathbb{E}\left(\sup_{t\in(0,T)}\int_{B_{k\varepsilon^{-\alpha}}} \big[1+ \varrho_\varepsilon^\gamma \big]_{\mathrm{res}} \,\mathrm{d}x  \right)^p\, 
\lesssim_{p,k} \varepsilon^{2m} 
%\\
%\mathbb{E}\left(\sup_{t\in(0,T)}\int_{B_k} \left[1\right]_{\mathrm{res}}\,\mathrm{d}x  \right)^p\, 
%\leq \varepsilon^{2m}
\end{aligned}
\end{equation}
for balls $B_{k\varepsilon^{-\alpha}}\subset\overline{\mathcal{O}}$ of radius ${k\varepsilon^{-\alpha}}>0$. Furthermore, we can show the following lemma.
\begin{lem}
\label{lem:taylorIsentropicPressure}
For all $p\in[1,\infty)$, we have 
\begin{align*}
\mathbb{E}\left(\sup_{t\in(0,T)}\int_{B_{k\varepsilon^{-\alpha}}} \frac{ 1}{\varepsilon^{2m}}\big[\varrho_\varepsilon^\gamma -\gamma(\varrho_\varepsilon -\overline{\varrho}_{\varepsilon})  -\overline{\varrho}_{\varepsilon}^\gamma\big]\,\mathrm{d}x  \right)^p\, 
\lesssim_{p,k} 1+\varepsilon^{2(m-1-\alpha)}
\end{align*}
uniformly in $\varepsilon$.
\end{lem}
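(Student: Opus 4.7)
The plan is to reduce the estimate to the uniform bounds \eqref{entropyRel1}--\eqref{uniBounds2} via a Taylor-type identity that decomposes the integrand into a relative-entropy piece and a linear correction driven by the mismatch $\overline{\varrho}_\varepsilon - 1$. Using $P(z) = (z^\gamma - z)/(\gamma - 1)$ obtained from \eqref{densityPotential}, together with the definition \eqref{entropyRel} of $H$, a direct computation yields the identity
\[
\varrho^\gamma - \gamma(\varrho - \overline{\varrho}_\varepsilon) - \overline{\varrho}_\varepsilon^\gamma = (\gamma-1)\,H(\varrho, \overline{\varrho}_\varepsilon) + \gamma\,\bigl(\overline{\varrho}_\varepsilon^{\gamma-1} - 1\bigr)(\varrho - \overline{\varrho}_\varepsilon).
\]
Divided by $\varepsilon^{2m}$, the first piece is controlled in $L^p_\omega L^\infty_t L^1_x$ directly by the relative energy estimate \eqref{entropyRel1} and contributes the $O(1)$ term in the claimed bound.

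For the linear correction, the key observation is that \eqref{staticProblem1} combined with the mean value theorem and \eqref{densityAndOne} yields the pointwise deterministic estimate
\[
\sup_{x \in B_{k\varepsilon^{-\alpha}}}\bigl|\overline{\varrho}_\varepsilon^{\gamma-1}(x) - 1\bigr| \lesssim_k \varepsilon^{2(m-1-\alpha)}.
\]
I would then split $\varrho_\varepsilon - \overline{\varrho}_\varepsilon$ into its essential and residual parts relative to $\overline{\varrho}_\varepsilon$. For the residual part, the elementary pointwise inequality $|\varrho_\varepsilon - \overline{\varrho}_\varepsilon| \lesssim 1 + \varrho_\varepsilon^\gamma$ (valid since $\gamma > 1$ and $\overline{\varrho}_\varepsilon$ is bounded) together with \eqref{uniBounds2} gives $\int_{B_{k\varepsilon^{-\alpha}}} [|\varrho_\varepsilon - \overline{\varrho}_\varepsilon|]_{\mathrm{res}}\,\mathrm{d}x \lesssim_k \varepsilon^{2m}$, so after normalisation this contribution carries a factor $\varepsilon^{2(m-1-\alpha)}$. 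For the essential part, Cauchy--Schwarz on the ball of volume $\lesssim_k \varepsilon^{-2\alpha}$ combined with the $L^2$ bound from \eqref{uniBounds2} yields $\int_{B_{k\varepsilon^{-\alpha}}}[|\varrho_\varepsilon - \overline{\varrho}_\varepsilon|]_{\mathrm{ess}}\,\mathrm{d}x \lesssim_k \varepsilon^{m-\alpha}$, producing a contribution of comparable or smaller order under the scaling $m > 10$ of Theorem \ref{thm:mainRo}.

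Finally, taking $p$-th moments of each piece, using that the pointwise bound on $\overline{\varrho}_\varepsilon^{\gamma-1} - 1$ is deterministic and can be pulled out of the expectation, and then collecting the three contributions yields the claim. The main technical subtlety is the essential-piece estimate on the \emph{dilating} ball $B_{k\varepsilon^{-\alpha}}$: the Cauchy--Schwarz loss $|B_{k\varepsilon^{-\alpha}}|^{1/2} \sim \varepsilon^{-\alpha}$ must be balanced simultaneously against the $\varepsilon^m$-smallness of $[\varrho_\varepsilon - \overline{\varrho}_\varepsilon]_{\mathrm{ess}}$ from \eqref{uniBounds2} and the $\varepsilon^{2(m-1-\alpha)}$ gain from \eqref{densityAndOne}. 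Organising these exponents so that each term lands inside $1 + \varepsilon^{2(m-1-\alpha)}$, together with keeping the essential/residual decomposition consistent with the stochastic $\sup_t$ and $L^p_\omega$ bounds coming out of \eqref{entropyRel1}--\eqref{uniBounds2}, is where the bookkeeping needs the most care.
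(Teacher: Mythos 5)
Your Taylor identity, the reduction to \eqref{entropyRel1} for the relative-energy piece, and the use of the pointwise bound $|\overline{\varrho}_\varepsilon^{\gamma-1}-1|\lesssim\varepsilon^{2(m-1-\alpha)}$ from \eqref{densityAndOne} for the residual piece all match the paper's argument, and your residual bookkeeping is correct: it yields exactly $\varepsilon^{2(m-1-\alpha)}$.

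However, the essential-piece estimate has a genuine gap. You pair the pointwise bound on $\overline{\varrho}_\varepsilon^{\gamma-1}-1$ with Cauchy--Schwarz on the dilating ball, getting $\int_{B_{k\varepsilon^{-\alpha}}}\big[|\varrho_\varepsilon-\overline{\varrho}_\varepsilon|\big]_{\mathrm{ess}}\,\mathrm{d}x \lesssim \varepsilon^{m-\alpha}$, and then assert the resulting contribution is ``of comparable or smaller order.'' If you actually assemble the exponents, the contribution is
\[
\frac{\varepsilon^{2(m-1-\alpha)}}{\varepsilon^{2m}}\cdot\varepsilon^{m-\alpha}=\varepsilon^{\,m-2-3\alpha},
\]
which is a \emph{negative} power of $\varepsilon$ under the scaling \eqref{alpha}: since $\alpha>1+m/2$ one has $m-2-3\alpha< m-5-\tfrac{3m}{2}=-\tfrac{m}{2}-5<0$. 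So this route produces a bound that blows up rather than one that ``lands inside $1+\varepsilon^{2(m-1-\alpha)}$,'' and the bookkeeping you flag as ``needing the most care'' in fact does not close with the tools you invoke. The issue is that for the essential piece the pointwise $L^\infty$ control of $\overline{\varrho}_\varepsilon^{\gamma-1}-1$ on the growing ball is far too lossy.

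The correct ingredient is the \emph{integrated} smallness of $\overline{\varrho}_\varepsilon-1$, which does not degrade with the ball radius. From \eqref{staticProblem1} and the bound $1\le\overline{\varrho}_\varepsilon\le c$ one gets the pointwise comparison $|\overline{\varrho}_\varepsilon(x)-1|\lesssim\varepsilon^{2(m-1)}G(x)$, and since $G\in L^1(\mathcal{O})\cap L^\infty(\mathcal{O})$ by \eqref{centrifugal} this gives the global bound $\big\Vert\overline{\varrho}_\varepsilon^{\gamma-1}-1\big\Vert_{L^2(\mathcal{O})}\lesssim\varepsilon^{2(m-1)}$. Pairing this with the $L^2$ bound on $\big[\varrho_\varepsilon-\overline{\varrho}_\varepsilon\big]_{\mathrm{ess}}$ from \eqref{uniBounds2} via Cauchy--Schwarz (or Young) in $L^2\times L^2$ yields an essential contribution of order
\[
\frac{1}{\varepsilon^{2m}}\cdot\varepsilon^{2(m-1)}\cdot\varepsilon^{m}=\varepsilon^{\,m-2}\lesssim 1,
\]
which is admissible for $m>2$. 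You should replace the Cauchy--Schwarz-on-the-volume step for the essential piece by this two-sided $L^2$ pairing; otherwise the proof does not cover the parameter regime of Theorem \ref{thm:mainRo}.
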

\begin{proof}
We first notice that
\begin{equation}
\begin{aligned}
\varrho_\varepsilon^\gamma -\gamma(\varrho_\varepsilon -\overline{\varrho}_{\varepsilon})  -\overline{\varrho}_{\varepsilon}^\gamma
&=
\big[\varrho_\varepsilon^\gamma -\gamma\overline{\varrho}_{\varepsilon}^{\gamma-1}(\varrho_\varepsilon -\overline{\varrho}_{\varepsilon})  -\overline{\varrho}_{\varepsilon}^\gamma\big]
\\&
+
\gamma(\overline{\varrho}_{\varepsilon}^{\gamma-1}-1)(\varrho_\varepsilon - \overline{\varrho}_{\varepsilon})
\\
&= 
(\gamma-1) H(\varrho_\varepsilon ,\overline{\varrho}_{\varepsilon})
+ 
\gamma(\overline{\varrho}_{\varepsilon}^{\gamma-1}-1)(\varrho_\varepsilon - \overline{\varrho}_{\varepsilon})
\end{aligned}
\end{equation}
where for $x\in B_{k\varepsilon^{-\alpha}}$, we get from \eqref{densityAndOne} that
\begin{align*}
\big\vert \overline{\varrho}_{\varepsilon}(x)^{\gamma-1} -1\big\vert  \lesssim \big\vert \overline{\varrho}_{\varepsilon}(x) -1\big\vert  \lesssim \varepsilon^{2(m-1-\alpha)}.
\end{align*}
The claim then follows from \eqref{entropyRel1} and \eqref{uniBounds2}.
\end{proof}

\begin{lem}
\label{uniformBounds}
For all $p\in[1,\infty)$ and ball $B\subset \mathcal{O}$, we have that
\begin{align}
&\mathbb{E} \,\bigg\vert \bigg( \int_0^T \big\Vert \mathbf{u}_\varepsilon \big\Vert_{W^{1,2}(\mathcal{O})}^2 \, \mathrm{d}t \bigg)^\frac{1}{2} \bigg\vert^p  \lesssim 1,
\label{velo}\\
&\mathbb{E} \,\bigg\vert \sup_{t\in[0,T]}  \big\Vert \sqrt{\varrho_\varepsilon} \mathbf{u}_\varepsilon \big\Vert_{L^2(\mathcal{O})}\bigg\vert^p  \lesssim 1,
\label{halfMomen}\\
&\mathbb{E} \,\Bigg\vert \sup_{t\in[0,T]}  \bigg\Vert \frac{\varrho_\varepsilon - \overline{\varrho}_{\varepsilon}}{\varepsilon^m} \bigg\Vert_{L^{\min\{2,\gamma\}}(B)}\Bigg\vert^p  \lesssim 1,
\label{limDense} 
\end{align}
and
\begin{align}
&\mathbb{E} \,\bigg\vert \sup_{t\in[0,T]}  \big\Vert \varrho_\varepsilon \big\Vert_{L^\gamma(B)}\bigg\vert^p  \lesssim 1,
\label{densegammaRot}\\
&\mathbb{E} \,\bigg\vert \sup_{t\in[0,T]}  \big\Vert \varrho_\varepsilon \mathbf{u}_\varepsilon \big\Vert_{L^\frac{2\gamma}{\gamma+1}(B)}\bigg\vert^p  \lesssim 1,
\label{momentum}\\
&\mathbb{E} \,\bigg\vert \bigg( \int_0^T \big\Vert \varrho_\varepsilon \mathbf{u}_\varepsilon\otimes  \mathbf{u}_\varepsilon \big\Vert_{L^\frac{6\gamma}{4\gamma+3}(B)}^2 \, \mathrm{d}t \bigg)^\frac{1}{2} \bigg\vert^p  \lesssim 1,\label{convectiv}
\end{align}
uniformly in $\varepsilon$.
\end{lem}
\begin{proof}
The first two follows immediately from \eqref{uniBounds1} and a version of Korn's inequality \cite[Theorem 10.17]{feireisl2009singular}, c.f. \cite[Eq. 2.20]{feireisl2012multi}. The bound \eqref{limDense} follows from \eqref{uniBounds2} and \eqref{densityAndOne}.
% The proof of \eqref{velo} is similar to that of \cite[Eq. 22]{mensah2016existence}.
The last three \eqref{densegammaRot}--\eqref{convectiv} can be found in \cite{mensah2016existence}[Eq. 23].
\end{proof}

\begin{lem}
\label{lem:strongDensity}
For all $p\in[1,\infty)$, we have that
\begin{align*}
\varrho_\varepsilon \, \rightarrow \, 1
&\text{ in } L^p(\Omega;L^\infty(0,T;L^{\min\{2,\gamma\}}_{\mathrm{loc}}(\mathcal{O})))
\end{align*}
as $\varepsilon\rightarrow0$.
\end{lem}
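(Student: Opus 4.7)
The plan is to establish the claim via the triangle inequality, splitting
\[
\varrho_\varepsilon - 1 = \big(\varrho_\varepsilon - \overline{\varrho}_\varepsilon\big) + \big(\overline{\varrho}_\varepsilon - 1\big),
\]
and controlling each piece separately using the uniform bound \eqref{limDense} and the pointwise estimate \eqref{densityAndOne} on the static state. Fix any compact set $K \subset \mathcal{O}$. Since $K$ is bounded, there exists $k>0$ (depending on $K$) such that $K \subset B_k$, where $B_k$ is the ball defined in \eqref{specialball}. In particular, $K$ sits inside every ball $B_{k\varepsilon^{-\alpha}}$ for all small $\varepsilon$ and any $\alpha \geq 0$.

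For the first term, I would simply observe that
\[
\big\Vert \varrho_\varepsilon - \overline{\varrho}_\varepsilon \big\Vert_{L^{\min\{2,\gamma\}}(K)} = \varepsilon^m \bigg\Vert \frac{\varrho_\varepsilon - \overline{\varrho}_\varepsilon}{\varepsilon^m} \bigg\Vert_{L^{\min\{2,\gamma\}}(K)},
\]
so that taking the supremum in time and the $p$-th moment and invoking \eqref{limDense} (with the ball $B_k$ in place of $B$) yields
\[
\mathbb{E}\,\bigg\vert \sup_{t\in[0,T]} \big\Vert \varrho_\varepsilon - \overline{\varrho}_\varepsilon \big\Vert_{L^{\min\{2,\gamma\}}(K)} \bigg\vert^p \lesssim_{p,K} \varepsilon^{mp} \longrightarrow 0
\]
as $\varepsilon \to 0$, since $m>10$.

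For the second term, $\overline{\varrho}_\varepsilon$ is deterministic and time-independent, so using \eqref{densityAndOne} with $\alpha = 0$ gives the pointwise bound $|\overline{\varrho}_\varepsilon(x) - 1| \lesssim_k \varepsilon^{2(m-1)}$ for every $x \in B_k \supset K$. Hence
\[
\mathbb{E}\,\bigg\vert \sup_{t\in[0,T]} \big\Vert \overline{\varrho}_\varepsilon - 1 \big\Vert_{L^{\min\{2,\gamma\}}(K)} \bigg\vert^p \lesssim_{p,K} \varepsilon^{2(m-1)p} |K|^{p/\min\{2,\gamma\}} \longrightarrow 0,
\]
again because $m>10$. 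Combining the two bounds via the triangle inequality yields the claim for the compact set $K$, and since $K$ was arbitrary, the convergence in $L^p(\Omega; L^\infty(0,T; L^{\min\{2,\gamma\}}_{\mathrm{loc}}(\mathcal{O})))$ follows.

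There is no real obstacle here, as the lemma is essentially a clean corollary of the two already-proved ingredients \eqref{limDense} and \eqref{densityAndOne}; the only mild subtlety is to note that any compact subset of $\mathcal{O}$ is contained in a fixed ball $B_k$ so that both estimates can be applied with $\alpha = 0$, which is exactly the regime that gives the strongest decay rates and requires only $m>1$, well within the hypothesis $m>10$.
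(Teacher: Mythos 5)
Your proof is correct and takes exactly the approach the paper intends: the paper's proof is the one-liner ``This is a direct consequence of \eqref{limDense} and \eqref{densityAndOne},'' and your expansion via the triangle-inequality split $\varrho_\varepsilon - 1 = (\varrho_\varepsilon - \overline{\varrho}_\varepsilon) + (\overline{\varrho}_\varepsilon - 1)$, controlling the first piece by \eqref{limDense} with the $\varepsilon^m$ prefactor and the second pointwise by \eqref{densityAndOne} with $\alpha=0$ on a fixed ball $B_k\supset K$, is precisely the argument being invoked. No gaps.
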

\begin{proof}
This is a direct consequence of \eqref{limDense} and \eqref{densityAndOne}.
\end{proof}
Now  let set $\mathrm{Ma}=\varepsilon^m$,  $\mathrm{Ro}=\mathrm{Fr}=\varepsilon$. Then  we observe that by setting $r_\varepsilon= \frac{\varrho_\varepsilon-\overline{\varrho}_{\varepsilon}}{\varepsilon^m}$, \footnote{This quantity is sometimes referred to as the density fluctuation.}  we derive from Eq. \eqref{comprSPDE0} the following:
\begin{equation}
\begin{aligned}
\label{acousticSPD00}
 \varepsilon^m\mathrm{d}r_\varepsilon &+  \mathrm{div}\,\left(\varrho_\varepsilon\mathbf{u}_\varepsilon\right) \mathrm{d}t   =0,
\\
\varepsilon^m \mathrm{d}\left(\varrho_\varepsilon\mathbf{u}_\varepsilon\right) +\big[\varepsilon^{m-1}(\mathbf{e}_3&\times \varrho_\varepsilon\mathbf{u}_\varepsilon)
 +
 \gamma \nabla r_\varepsilon \big]\mathrm{d}t
 = 
 \varepsilon^m \textbf{F}_\varepsilon\,\mathrm{d}t
 \\
 &+
 \varepsilon^{2(m-1)}r_\varepsilon\nabla G\,\mathrm{d}t
+ \varepsilon^m \Phi(\varrho_\varepsilon, \varrho_\varepsilon\mathbf{u}_\varepsilon)\mathrm{d}W  
\end{aligned}
\end{equation}
in the sense of distributions and where we have used \eqref{staticProblem},
\begin{align}
\label{allForces}
\textbf{F}_\varepsilon  &:=  \mathrm{div}\left(\mathbb{S}(\nabla\mathbf{u}_\varepsilon)\right)
-
\mathrm{div}( \varrho_\varepsilon\mathbf{u}_\varepsilon\otimes \mathbf{u}_\varepsilon)
- \frac{1}{\varepsilon^{2m}}\nabla[\varrho_\varepsilon^\gamma -\gamma(\varrho_\varepsilon -\overline{\varrho}_{\varepsilon})  -\overline{\varrho}_{\varepsilon}^\gamma ] 
\end{align}
and for any $K\Subset\mathcal{O}$
\begin{align}
\label{allForces1}
\textbf{F}_\varepsilon  \in L^p\left( \Omega; L^2\left(0,T;W^{-l,2}(K) \right)\right)
\end{align}
uniformly in $\varepsilon$ for $l>5/2$. The uniform estimate \eqref{allForces1} follows from Lemma \ref{lem:taylorIsentropicPressure}, \eqref{velo} and \eqref{convectiv}  and is similar to the proof of \cite[Eq. 44]{mensah2016existence}.
\\
Finally, one can also infer from \eqref{limDense} and   \eqref{centrifugal} that
\begin{align}
\label{limDenseAndCentri}
 r_\varepsilon\nabla G &\in
 L^p(\Omega;L^\infty(0,T;L_{\mathrm{loc}}^{\min\{2,\gamma\}}(\mathcal{O}))).
\end{align}

\subsection{Analysis of the Coriolis term}
\label{sec:Coriolis}
We wish to show in this section that the Coriolis term is a gradient vector field provided we consider its vertical average. cf. \cite[Section 3]{feireisl2012multi} and \cite[Section 3]{gallagher2006weak}. To see this, let first consider the following notation: 
\begin{align}
\label{verticalAverage}
\ulcorner {g}\urcorner=\fint_{\mathbb{T}_1} {g}\,\mathrm{d}x_3
=\frac{1}{\vert\mathbb{T}_1\vert}\int_{\mathbb{T}_1} g\,\mathrm{d}x_3
\end{align}
for any function $g$ defined on $\mathcal{O}$. Then we observe that if we set ${\mathbf{Y}}_{\varepsilon}:=\mathcal{P}\left({\varrho}_{\varepsilon}{\mathbf{u}}_{\varepsilon}\right)$, we have that
 $\mathrm{div}( \ulcorner{\mathbf{Y}}_{\varepsilon}\urcorner)
=\partial_{x_1}\ulcorner {Y}^1_{\varepsilon}\urcorner+ \partial_{x_2}\ulcorner {Y}^2_{\varepsilon}\urcorner=0$. As such,
\begin{align}
\label{curl}
\mathrm{curl}\,\left(\mathbf{e}_3\times\left\ulcorner {\mathbf{Y}}_{\varepsilon} \right\urcorner \right)=\left(0, 0,\partial_{x_1}\left\ulcorner {Y}^1_{\varepsilon}\right\urcorner+ \partial_{x_2}\left\ulcorner {Y}^2_{\varepsilon}\right\urcorner \right)=\bm{0}.
\end{align}
\begin{rem}
It is crucial at this point to consider the vertical average of the solenoidal part of momentum since otherwise, the curl of the Coriolis term for the full momentum is not zero. This taking of the vertical average is a reason why we require the special geometry $\mathcal{O}$ rather than the whole space $\mathbb{R}^3$.
\end{rem}

We also observe that for any potential $\Psi$, the following identity
\begin{align}
\label{commutativity} 
\ulcorner \mathbf{e}_3\times \nabla \Psi \urcorner
=
 \mathbf{e}_3\times \ulcorner\nabla \Psi\urcorner
 =
 \mathbf{e}_3\times \nabla\ulcorner \Psi\urcorner
\end{align}
holds. Subsequently, we will use any of the identities in \eqref{commutativity} interchangeably throughout the rest of this paper.

%Having shown that the vertical average of the Coriolis term is trivially a gradient, we can now proceed to the compactness argument.

\subsection{Compactness}
\label{subsec:compactness}As in \cite[Eq. 3.3]{feireisl2012multi}, we introduce the smooth family of cut-off functions $\eta_\varepsilon$ satisfying
\begin{equation}
\begin{aligned}
\label{newCufOff}
&\eta_\varepsilon \in C^\infty_c(\mathbb{R}^2), \quad 0\leq \eta_\varepsilon \leq 1, \quad \eta_\varepsilon (x_h)\equiv1 \text{ in }  B_{\varepsilon^{-\alpha}}, 
\\
&\eta_\varepsilon (x_h)=0 \text{ if }  \vert x_h\vert \geq 2\varepsilon^{-\alpha},\quad  \big\vert \nabla \eta_\varepsilon (x_h) \big\vert \leq 2 \varepsilon^\alpha  \text{ for }x_h\in\mathbb{R}^2
\end{aligned}
\end{equation}
where since $m>10$,  we can choose $\alpha$ in \eqref{newCufOff} such that
\begin{align}
\label{alpha}
1+\frac{m}{2}<\alpha< \frac{3m}{4}-\frac{3}{2}.
\end{align}
To explore compactness, let define the following spaces:
\begin{align*}
\chi_{\ulcorner \varrho\mathbf{u}\urcorner} &= C_w\left([0,T];L^{\frac{2\gamma}{\gamma+1}}_{\mathrm{loc}}(\mathcal{O})\right), 
%\cap \left( L^2\left(0,T;L^{2}_{\mathrm{loc}}(\mathcal{O})\right) ,w\right),
\\
\chi_\mathbf{u} &= \left(L^2(0,T;W^{1,2}(\mathcal{O})),w\right),
\\
\chi_\varrho &= C_w\left([0,T];L^\gamma_{\mathrm{loc}}(\mathcal{O})\right),
\\
\chi_W &= C\left([0,T];\mathfrak{U}_0\right) ,
\end{align*}
and let
\begin{enumerate}
\item $\mu_{\ulcorner\mathcal{P}(\eta_\varepsilon\varrho_\varepsilon\mathbf{u}_\varepsilon)\urcorner}$ be the law of $\ulcorner \mathcal{P}\,(\eta_\varepsilon\varrho_\varepsilon\mathbf{u}_\varepsilon)\urcorner$ on the space $\chi_{ \ulcorner \varrho\mathbf{u}\urcorner}$,
\item $\mu_{\mathbf{u}_\varepsilon}$ be the law of $\mathbf{u}_\varepsilon$ on $\chi_{\mathbf{u}}$,
\item $\mu_{\varrho_\varepsilon}$ be the law of $\varrho_\varepsilon$ on the space $\chi_{\varrho}$,
\item $\mu_{W}$ be the law of $W$ on the space $\chi_{W}$.
\end{enumerate}
Now we let $\mu^{\varepsilon,\delta}$ and $\nu^{\varepsilon,\delta}$ be the joint laws of 
\begin{align*}
(\varrho_\varepsilon,\mathbf{u}_\varepsilon,\ulcorner\mathcal{P}\,(\eta_\varepsilon\varrho_\varepsilon\mathbf{u}_\varepsilon)\urcorner, \varrho_\delta,\mathbf{u}_\delta,\ulcorner\mathcal{P}\,(\eta_\delta\varrho_\delta\mathbf{u}_\delta)\urcorner)
\end{align*}
and 
\begin{align*}
(\varrho_\varepsilon,\mathbf{u}_\varepsilon,\ulcorner\mathcal{P}\,(\eta_\varepsilon\varrho_\varepsilon\mathbf{u}_\varepsilon)\urcorner, \varrho_\delta,\mathbf{u}_\delta,\ulcorner\mathcal{P}\,(\eta_\delta\varrho_\delta\mathbf{u}_\delta)\urcorner, W) 
\end{align*}
respectively on the path space $\chi = \chi_\varrho \times \chi_\mathbf{u} \times \chi_{\ulcorner\varrho\mathbf{u}\urcorner} \times\chi_\varrho \times \chi_\mathbf{u} \times \chi_ {\ulcorner\varrho\mathbf{u}\urcorner}$ and $\chi^J = \chi \times \chi_W$ respectively.

%\begin{remark}
%In the limit as $\varepsilon\rightarrow0$, Lemma \ref{lem:uniformGrad} suggests that $ \mathcal{Q}\left(\mathbf{m}_\varepsilon\right) =\nabla \mathbf{\Psi}_\varepsilon\rightarrow0$ a.s. that is why we are only concerned with the law for the solenoidal part, i.e., $\mu_{\ulcorner\mathcal{P}\mathbf{m}_\varepsilon\urcorner}$. 
%\end{remark}

\begin{lem}
\label{lem:tighness}
The collection $\{ \mu_{\ulcorner\mathcal{P}(\eta_\varepsilon\varrho_\varepsilon\mathbf{u}_\varepsilon)\urcorner} \, ; \, \varepsilon \in(0,1) \}$ is tight on $\chi_{\ulcorner\varrho\mathbf{u}\urcorner}$. 
\end{lem}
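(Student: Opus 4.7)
The plan is the standard Aubin--Lions/Kolmogorov route: combine a uniform spatial bound in a ``good'' Banach space with an equi-continuity estimate in a weaker space into which the former embeds compactly, and then quote the standard tightness criterion for $C_w([0,T];X)$ (e.g. the Arzel\`a--Ascoli-type result used in \cite{breit2015compressible}). Concretely, for every ball $B\subset\mathcal{O}$, estimate \eqref{momentum} together with boundedness of the Helmholtz projection $\mathcal{P}$ on $L^{2\gamma/(\gamma+1)}$-functions supported in $B$ and the trivial boundedness of $\ulcorner\cdot\urcorner$ gives $\ulcorner\mathcal{P}(\varrho_\varepsilon\mathbf{u}_\varepsilon)\urcorner$ uniformly bounded in $L^p(\Omega;L^\infty(0,T;L^{2\gamma/(\gamma+1)}_{\mathrm{loc}}(\mathcal{O})))$ for all $p$.

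For the temporal regularity, apply $\ulcorner\mathcal{P}(\cdot)\urcorner$ to the momentum equation \eqref{comprSPDE0}$_2$ to write, in a suitable negative Sobolev space $W^{-l,2}_{\mathrm{loc}}$,
\begin{equation*}
\ulcorner\mathcal{P}(\varrho_\varepsilon\mathbf{u}_\varepsilon)\urcorner(t) - \ulcorner\mathcal{P}(\varrho_\varepsilon\mathbf{u}_\varepsilon)\urcorner(s) = \int_s^t A_\varepsilon(r)\,\mathrm{d}r + \int_s^t \ulcorner\mathcal{P}\Phi(\varrho_\varepsilon,\varrho_\varepsilon\mathbf{u}_\varepsilon)\urcorner\,\mathrm{d}W.
\end{equation*}
The three singular contributions must be handled by cancellation rather than by absolute bounds. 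First, $\mathcal{P}$ annihilates $\nabla p(\varrho_\varepsilon)$, so the $\varepsilon^{-2m}$ pressure drops out. Second, using \eqref{staticProblem} to write $\varepsilon^{-2}\varrho_\varepsilon\nabla G=\varepsilon^{m-2}r_\varepsilon\nabla G+\varepsilon^{-2m}\nabla\overline\varrho_\varepsilon^\gamma$, the gradient part is killed by $\mathcal{P}$ and the remaining $\varepsilon^{m-2}\mathcal{P}(r_\varepsilon\nabla G)$ is uniformly controlled in view of \eqref{limDenseAndCentri}. Third, for the Coriolis term one exploits Section \ref{sec:Coriolis}: the identity \eqref{curl} (together with $\ulcorner\mathcal{P}\cdot\urcorner=\mathcal{P}_h\ulcorner\cdot\urcorner$ on horizontal zero modes) reduces $\varepsilon^{-1}\ulcorner\mathcal{P}[\mathbf{e}_3\times\varrho_\varepsilon\mathbf{u}_\varepsilon]\urcorner$ to $\varepsilon^{-1}\mathbf{e}_3\times\mathcal{Q}_h\ulcorner\varrho_\varepsilon\mathbf{u}_\varepsilon\urcorner$, and the vertical average of the continuity equation gives $\mathrm{div}_h\ulcorner\varrho_\varepsilon\mathbf{u}_\varepsilon\urcorner=-\partial_t\ulcorner\varrho_\varepsilon\urcorner=-\varepsilon^m\partial_t\ulcorner r_\varepsilon\urcorner$, so after integration in time the Coriolis contribution is an $\varepsilon^{m-1}$ term plus boundary contributions in $\varrho_\varepsilon-\overline\varrho_\varepsilon$, controlled uniformly by \eqref{limDense}. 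The remaining pieces of $A_\varepsilon$ are the viscous and convective terms, uniformly bounded in $L^p(\Omega;L^2(0,T;W^{-1,q}_{\mathrm{loc}}))$ by \eqref{velo} and \eqref{convectiv}, respectively.

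For the stochastic integral, the noise estimate \eqref{noiseEst} and the embedding $L^1\hookrightarrow W^{-l,2}$ give $\ulcorner\mathcal{P}\Phi(\varrho_\varepsilon,\varrho_\varepsilon\mathbf{u}_\varepsilon)\urcorner$ uniformly bounded in $L^p(\Omega;L^2(0,T;L_2(\mathfrak{U};W^{-l,2}_{\mathrm{loc}})))$, and the Burkholder--Davis--Gundy inequality yields, for some $\alpha\in(0,1/2)$,
\begin{equation*}
\mathbb{E}\bigl\|\ulcorner\mathcal{P}(\varrho_\varepsilon\mathbf{u}_\varepsilon)\urcorner\bigr\|_{C^\alpha([0,T];W^{-l,2}(B))}^p\lesssim 1,\qquad\varepsilon\in(0,1).
\end{equation*}

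Combining the step-one spatial bound with this equi-H\"older regularity, invoking the compact embedding $L^{2\gamma/(\gamma+1)}(B)\hookrightarrow\hookrightarrow W^{-l,2}(B)$ for $l$ large, and passing to a diagonal subsequence over an exhausting family of balls in $\mathcal{O}$, the tightness criterion for $C_w([0,T];L^{2\gamma/(\gamma+1)}_{\mathrm{loc}}(\mathcal{O}))$ delivers the claim. The main obstacle is the Coriolis step: it is the one place where the geometry $\mathcal{O}=\mathbb{R}^2\times(0,1)$ and the combination of $\mathcal{P}$ with vertical averaging are essential, since otherwise the $\varepsilon^{-1}$ singularity in front of $\varrho_\varepsilon(\mathbf{e}_3\times\mathbf{u}_\varepsilon)$ would obstruct temporal equi-continuity.
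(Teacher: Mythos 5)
Your proposal follows the same overall architecture as the paper's proof: combine the $L^\infty_t L^{2\gamma/(\gamma+1)}_{\mathrm{loc}}$ bound \eqref{momentum} with a H\"older-in-time estimate (deterministic part via direct integration, stochastic part via Burkholder--Davis--Gundy and Kolmogorov), then invoke the compact embedding into $C_w([0,T];L^{2\gamma/(\gamma+1)}_{\mathrm{loc}})$ from \cite[Corollary B.2]{ondrejat2010stochastic}. The treatment of the pressure (killed by $\mathcal{P}$), the centrifugal force (via \eqref{staticProblem}, \eqref{limDenseAndCentri}), the viscous and convective terms (via \eqref{velo}, \eqref{convectiv}), and the noise are all in line with the paper.

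Where you genuinely diverge is the Coriolis term, and your treatment is actually the more careful one. The paper simply cites Section \ref{sec:Coriolis} and drops the Coriolis term from the projected, vertically averaged equation. But Section \ref{sec:Coriolis} only establishes that $\mathbf{e}_3\times\ulcorner\mathcal{P}(\varrho_\varepsilon\mathbf{u}_\varepsilon)\urcorner$ is curl-free, i.e. that the Coriolis contribution from the \emph{solenoidal} part of momentum is a gradient. After $\ulcorner\mathcal{P}\cdot\urcorner$, the Coriolis term also picks up a contribution from the gradient part of momentum, and — as you correctly compute via $\mathcal{P}_h(\mathbf{e}_3\times\cdot)=\mathbf{e}_3\times\mathcal{Q}_h(\cdot)$ on horizontal zero modes — this residual $\mathbf{e}_3\times\mathcal{Q}_h\ulcorner\varrho_\varepsilon\mathbf{u}_\varepsilon\urcorner$ does not vanish. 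Your use of the vertically averaged continuity equation, $\mathrm{div}_h\ulcorner\varrho_\varepsilon\mathbf{u}_\varepsilon\urcorner_h=-\varepsilon^m\partial_t\ulcorner r_\varepsilon\urcorner$, to absorb the singular $\varepsilon^{-1}$ prefactor is the right move, and it flags a step the paper leaves implicit.

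One point you should make explicit: the bound you get from this cancellation, namely that the Coriolis contribution to the increment over $[s,t]$ is $O(\varepsilon^{m-1})$ uniformly, is not by itself a H\"older modulus in $|t-s|$. You should combine it with the trivial bound $\varepsilon^{-1}|t-s|\sup_r\|\mathcal{Q}_h\ulcorner\varrho_\varepsilon\mathbf{u}_\varepsilon\urcorner\|\lesssim\varepsilon^{-1}|t-s|$ (from \eqref{momentum}) and interpolate: $\min(\varepsilon^{m-1},\varepsilon^{-1}|t-s|)\lesssim|t-s|^{(m-1)/m}$ uniformly in $\varepsilon$. Since $m>10$, this H\"older exponent exceeds $1/2$ and is more than enough for the Arzel\`a--Ascoli-type compactness criterion, so the conclusion stands, but without this step the ``$\varepsilon^{m-1}$ term controlled by \eqref{limDense}'' does not directly yield the equi-continuity that tightness requires.
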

\begin{proof}
We have shown in Section \ref{sec:Coriolis} that the vertical average of the Coriolis term is curl-free meaning that it is a gradient vector. We now combine the approach of  \cite[Proposition 3.6]{breit2015incompressible} and \cite[Section 5.4.2]{feireisl2009singular} and consider the projection of \eqref{acousticSPD00} onto solenoidal fields. This is done by the special choice of divergence-free test function $\mathcal{P}\bm{\phi} $, $\bm{\phi} \in C^\infty_{c} (\mathbb{R}^3)$. With this test function, we get by integration by part that the vertical average of the distributional form of the momentum equation \eqref{acousticSPD00} is
\begin{equation}
\begin{aligned}
\ulcorner &\big\langle (\eta_\varepsilon\varrho_\varepsilon \mathbf{u}_\varepsilon)(t), \mathcal{P}\bm{\phi} \big\rangle \urcorner 
= \ulcorner  \big\langle \eta_\varepsilon\varrho_{\varepsilon,0}\mathbf{u}_{\varepsilon,0}, \mathcal{P}\bm{\phi} \big\rangle \urcorner
\\&+
 \int_0^{t} \ulcorner  \big\langle\eta_\varepsilon( \varrho_\varepsilon \mathbf{u}_\varepsilon \otimes \mathbf{u}_\varepsilon), \nabla \mathcal{P}\bm{\phi} \big\rangle \urcorner\,\mathrm{d}s
-
\int_0^{t} \ulcorner \big\langle\nu\,\eta_\varepsilon\nabla \mathbf{u}_\varepsilon\,, \nabla \mathcal{P}\bm{\phi} \big\rangle \urcorner \mathrm{d}s 
\\& 
+
 \int_0^{t}\ulcorner  \big\langle \varepsilon^{m-2}\eta_\varepsilon(r_\varepsilon\nabla G) ,\mathcal{P} \bm{\phi} \big\rangle \urcorner\,\mathrm{d}s
+
 \int_0^{t}\ulcorner  \big\langle \mathcal{R}_\varepsilon , \mathcal{P}\bm{\phi} \big\rangle \urcorner\,\mathrm{d}s
\\&
 +
 \int_0^{t} \ulcorner \big\langle \eta_\varepsilon\Phi(\varrho_\varepsilon ,\varrho_\varepsilon \mathbf{u}_\varepsilon), \mathcal{P}\bm{\phi} \big\rangle \urcorner\,\mathrm{d}W
\end{aligned}
\end{equation}
$\mathbb{P}$-a.s. for all $t\in [0,T]$ and where
\begin{equation}
\begin{aligned}
\label{residualLLower}
\mathcal{R}_\varepsilon
=
\nabla\eta_\varepsilon \cdot ( \varrho_\varepsilon \mathbf{u}_\varepsilon \otimes \mathbf{u}_\varepsilon)
-
\nabla\eta_\varepsilon \cdot \nu\,\mathbf{u}_\varepsilon.
\end{aligned}
\end{equation}
To proceed, we make the following denotations:
\begin{align*}
\ulcorner T_\varepsilon(t) \urcorner
&:= 
 \ulcorner  \big\langle \eta_\varepsilon\varrho_{\varepsilon,0}\mathbf{u}_{\varepsilon,0}, \mathcal{P}\bm{\phi} \big\rangle \urcorner
+
 \int_0^{t} \ulcorner  \big\langle\eta_\varepsilon( \varrho_\varepsilon \mathbf{u}_\varepsilon \otimes \mathbf{u}_\varepsilon), \nabla \mathcal{P}\bm{\phi} \big\rangle \urcorner\,\mathrm{d}s
\\&
-
\int_0^{t} \ulcorner \big\langle\nu\,\eta_\varepsilon\nabla \mathbf{u}_\varepsilon\,, \nabla \mathcal{P}\bm{\phi} \big\rangle \urcorner \mathrm{d}s 
+
 \int_0^{t}\ulcorner  \big\langle \varepsilon^{m-2}\eta_\varepsilon(r_\varepsilon\nabla G) ,\mathcal{P} \bm{\phi} \big\rangle \urcorner\,\mathrm{d}s
\\&
+
 \int_0^{t}\ulcorner  \big\langle \mathcal{R}_\varepsilon , \mathcal{P}\bm{\phi} \big\rangle \urcorner\,\mathrm{d}s
\\
\ulcorner R_\varepsilon(t)\urcorner&:= \int_0^{t} \ulcorner \big\langle \eta_\varepsilon\Phi(\varrho_\varepsilon ,\varrho_\varepsilon \mathbf{u}_\varepsilon), \mathcal{P}\bm{\phi} \big\rangle \urcorner\,\mathrm{d}W
\end{align*}
for all $t\in[0,T]$. Now consider any compact set $K=\overline{K}\times \mathbb{T}_1$. Then by using the continuity of the operator $\mathcal{P}$, \eqref{convectiv} and the continuous embedding $W^{-1,\frac{6\gamma}{4\gamma+3}}(K)\hookrightarrow  W^{-l,2}(K)$ which holds true provided $l>\frac{5}{2}$, we get   that
\begin{equation}
\begin{aligned}
\label{convT}
\ulcorner \mathcal{P}\,\mathrm{div}(\eta_\varepsilon\varrho_\varepsilon\mathbf{u}_\varepsilon\otimes \mathbf{u}_\varepsilon)  \urcorner\in L^p\left( \Omega;L^2\left(0,T; W^{-l,2}(K) \right) \right), \quad l>\frac{5}{2},
\end{aligned}
\end{equation}
uniformly in $\varepsilon$ for all $p\in[1,\infty)$. Furthermore, by using \eqref{velo} and  the continuity of $\mathcal{P}$, we also get that
\begin{equation}
\begin{aligned}
\label{velT}
\ulcorner \mathcal{P}\,(\nu \, \mathrm{div}(\eta_\varepsilon\nabla \mathbf{u}_\varepsilon) \urcorner \in L^p\left( \Omega;L^2\left(0,T; W^{-1,2}(K) \right) \right)
\end{aligned}
\end{equation}
uniformly in $\varepsilon$ for all $p\in[1,\infty)$. Also, the continuous embedding\\ $L^\infty(0,T;L^1(K))$ $\hookrightarrow L^2(0,T;W^{-l,2}(K))$  gives 
\begin{equation}
\begin{aligned}
\label{centT}
\ulcorner \mathcal{P}\,(\eta_\varepsilon r_\varepsilon\nabla  G) \urcorner \in L^1\left( \Omega;L^2\left(0,T; W^{-l,2}(K) \right) \right).
\end{aligned}
\end{equation}
Indeed, by using the aforementioned embedding, the continuity of $\mathcal{P}$, \eqref{centrifugal}, H\"{o}lder's inequality and \eqref{limDense},  we have that
\begin{equation}
\begin{aligned}
\mathbb{E} &\Vert  \ulcorner \mathcal{P}(\eta_\varepsilon r_\varepsilon\nabla  G) \urcorner \Vert_{L^2\left(0,T; W^{-l,2}(K) \right)}
\\&
\lesssim
\mathbb{E} \bigg(
\Vert \nabla G\Vert_{L^\infty_x}\sup_{t\in(0,T)} \Vert  \ulcorner r_\varepsilon \urcorner \Vert_{ L^1(K)}\bigg)
\\
&\lesssim\,\mathbb{E}\, 
\sup_{t\in(0,T)} \Vert  r_\varepsilon  \Vert_{ L^{\min\{2,\gamma\}}(K)}\lesssim 1
\end{aligned}
\end{equation}
uniformly in $\varepsilon
$. The residual term \eqref{residualLLower} is comparable to \eqref{convT}, \eqref{velT} and   is in fact, of lower order. Subsequently, by combining \eqref{convT}, \eqref{velT} and \eqref{centT}, it follows that
\begin{equation}
\begin{aligned}
\label{tT}
\partial_t\ulcorner T_\varepsilon(t)\urcorner\in L^1\left( \Omega;L^2\left(0,T; W^{-l,2}(K) \right) \right)
\end{aligned}
\end{equation}
uniformly in $\varepsilon$ since $\varepsilon^{m-2}\ll 1$ for $m>3$. 
It follows from \eqref{tT}  that  the mean
\begin{align}
\label{kolmogorovEst1}
\mathbb{E}\, \left\Vert \,\ulcorner T_\varepsilon\urcorner \, \right\Vert_{C^\vartheta\left([0,T];W^{-l,2}(K)  \right)}\lesssim 1
\end{align}
is bounded uniformly in $\varepsilon$ for $\vartheta\in[0,\frac{1}{2}]$. 
Again, much like the proof of \cite[Proposition 3.6]{breit2015incompressible}, one gets by using \eqref{stochCoeffBound} and \eqref{noiseEst} that
\begin{align*}
\mathbb{E}&\, \left\Vert   \ulcorner R_\varepsilon(t)\urcorner  -   \ulcorner R_\varepsilon(s)\urcorner\right\Vert^\theta_{W^{-l,2}(K)}
\lesssim
\mathbb{E}\, \left\Vert  \int_s^t  \Phi\left( \varrho_\varepsilon, \varrho_\varepsilon\mathbf{u}_\varepsilon \right)  \,\mathrm{d}W  \right\Vert^\theta_{W^{-l,2}(K)}
\\
&\lesssim
\mathbb{E}\Bigg(
 \int_s^t \sum_{k\in\mathbb{N}}  \left\Vert \mathbf{g}_k\left( x,\varrho_\varepsilon, \varrho_\varepsilon\mathbf{u}_\varepsilon \right)       \right\Vert^2_{W^{-l,2}(K)}\,\mathrm{d}\tau
\Bigg)^\frac{\theta}{2}
\\
&\lesssim
\mathbb{E}\Bigg(
 \int_s^t \sum_{k\in\mathbb{N}}  \left\Vert  \mathbf{g}_k \left( x,\varrho_\varepsilon, \varrho_\varepsilon\mathbf{u}_\varepsilon \right)      \right\Vert^2_{L^1(K)}\,\mathrm{d}\tau
\Bigg)^\frac{\theta}{2}
\\
&\lesssim
\mathbb{E}\Bigg(
 \int_s^t  \int_{{K}} (1  + \varrho^\gamma_\varepsilon + \varrho _\varepsilon \vert \mathbf{u}_\varepsilon \vert^2)\mathrm{d}x\,\mathrm{d}\tau
\Bigg)^\frac{\theta}{2}
\\
&\lesssim \vert t-s\vert^\frac{\theta}{2}
\Bigg(
1  + \mathbb{E}\sup_{t\in[0,T]}\Vert\varrho_\varepsilon\Vert_{L^\gamma(K)}^{\theta\gamma/2} + \mathbb{E}\sup_{t\in[0,T]}\Vert\sqrt{\varrho}_\varepsilon  \mathbf{u}_\varepsilon \Vert_{L^2(K)}^{\theta}
\Bigg)^\frac{\theta}{2}
\\&
\lesssim \vert t-s\vert^\frac{\theta}{2}.
\end{align*}
In the last estimate above, we have used \eqref{halfMomen} and \eqref{densegammaRot}. We now apply Kolmogorov's continuity criterion and then combining with \eqref{kolmogorovEst1}, we get that
\begin{align}
\label{kolmogorovEst2}
\mathbb{E}\, \left\Vert \ulcorner \mathcal{P}\,(\eta_\varepsilon \varrho_\varepsilon\mathbf{u}_\varepsilon)(t)\urcorner \right\Vert_{C^\vartheta\left([0,T];W^{-l,2}(K)  \right)}\lesssim 1.
\end{align}
Finally, we use the compact embedding (see \cite[Corollary B.2]{ondrejat2010stochastic})
\begin{equation}
\begin{aligned}
\label{kolmogorovEst2aa}
L^\infty\big( 0,T;L^\frac{2\gamma}{\gamma+1}(K) \big)\cap C^\vartheta \big([0,T];W^{-l,2}(K)\big) \hookrightarrow 
C_w\big([0,T];L^\frac{2\gamma}{\gamma+1}(K)\big)
\end{aligned}
\end{equation}
and \eqref{momentum} to finish the proof.
\end{proof}

\begin{lem}
The collection $\{  \nu^{\varepsilon,\delta} \, ; \, \varepsilon,\delta\in(0,1) \}$ is tight on $\chi^J$. 
\end{lem}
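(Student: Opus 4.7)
The plan is to exploit the product structure of $\chi^J$: since marginal tightness on each factor implies tightness of the joint law on the product, it suffices to verify that each of the marginal families
\begin{equation*}
\{\mu_{\varrho_\varepsilon}\},\ \{\mu_{\mathbf{u}_\varepsilon}\},\ \{\mu_{\ulcorner\mathcal{P}(\varrho_\varepsilon\mathbf{u}_\varepsilon)\urcorner}\},\ \{\mu_{\varrho_\delta}\},\ \{\mu_{\mathbf{u}_\delta}\},\ \{\mu_{\ulcorner\mathcal{P}(\varrho_\delta\mathbf{u}_\delta)\urcorner}\},\ \{\mu_W\}
\end{equation*}
is tight on its corresponding factor. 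Tightness for the two $\varrho$-factors is identical, and likewise for the two $\mathbf{u}$-factors and the two solenoidal-momentum factors, so one only needs to carry out three arguments; the $W$-marginal is a single deterministic law and is trivially tight.

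For the $\mathbf{u}_\varepsilon$-marginal on $\chi_{\mathbf{u}} = (L^2(0,T;W^{1,2}(\mathcal{O})),w)$, I would invoke the fact that closed balls in this reflexive Banach space are weakly compact by Banach--Alaoglu; hence $B_R := \{\mathbf{v} : \|\mathbf{v}\|_{L^2(0,T;W^{1,2})} \le R\}$ is compact in $\chi_{\mathbf{u}}$, and by the Chebyshev inequality combined with the uniform bound \eqref{velo} we obtain $\mathbb{P}(\mathbf{u}_\varepsilon \notin B_R) \le R^{-1}\mathbb{E}\|\mathbf{u}_\varepsilon\|_{L^2(0,T;W^{1,2})} \lesssim R^{-1}$, which can be made arbitrarily small uniformly in $\varepsilon$. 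The solenoidal-momentum marginal $\mu_{\ulcorner\mathcal{P}(\varrho_\varepsilon\mathbf{u}_\varepsilon)\urcorner}$ is exactly the statement of the already established Lemma \ref{lem:tighness}.

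The remaining step is tightness of $\mu_{\varrho_\varepsilon}$ on $\chi_\varrho = C_w([0,T];L^\gamma_{\mathrm{loc}}(\mathcal{O}))$. I would combine the uniform bound $\mathbb{E}\sup_{t\in[0,T]}\|\varrho_\varepsilon\|_{L^\gamma(K)}^p \lesssim 1$ with a time-regularity estimate obtained from the continuity equation $\mathrm{d}\varrho_\varepsilon = -\mathrm{div}(\varrho_\varepsilon\mathbf{u}_\varepsilon)\,\mathrm{d}t$. Since $\varrho_\varepsilon\mathbf{u}_\varepsilon$ is bounded in $L^\infty(0,T;L^{2\gamma/(\gamma+1)}(K))$ by \eqref{momentum}, its divergence is bounded in $L^\infty(0,T;W^{-1,2\gamma/(\gamma+1)}(K))$, yielding a uniform H\"older estimate
\begin{equation*}
\mathbb{E}\|\varrho_\varepsilon\|_{C^{\vartheta}([0,T];W^{-l,2}(K))} \lesssim 1,\quad \vartheta \in (0,1),
\end{equation*}
for $l$ large enough. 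The compact embedding
\begin{equation*}
L^\infty(0,T;L^\gamma(K)) \cap C^\vartheta([0,T];W^{-l,2}(K)) \hookrightarrow C_w([0,T];L^\gamma(K))
\end{equation*}
(the same tool invoked at the end of Lemma \ref{lem:tighness}) then produces compact sets in $\chi_\varrho$ whose complements carry asymptotically small probability, and tightness follows by the Chebyshev-type argument.

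The main obstacle is not technical; the arguments for each marginal are standard once the uniform estimates from Section \ref{subsec:energyineq} are in hand. The one point that requires mild care is the unbounded spatial domain, which forces me to work through compact exhaustions $B_k$ of $\mathcal{O}$ and to confirm that each of the local compactness results assembles into tightness on the Fr\'echet-type path spaces $C_w([0,T];L^{\gamma}_{\mathrm{loc}})$ and $C_w([0,T];L^{2\gamma/(\gamma+1)}_{\mathrm{loc}})$; this is handled by a standard diagonal argument together with the metrizability of these local topologies.
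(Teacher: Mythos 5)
Your proposal is correct and takes the same route the paper (implicitly) does: the paper's proof is a pointer to \cite[Lemma 6]{mensah2016existence} and \cite[Corollary 3.7]{breit2015incompressible}, both of which reduce joint tightness to marginal tightness on each factor of the product space exactly as you describe, with the $\mathbf{u}$-marginal via weak compactness of balls and Chebyshev, the momentum marginal via Lemma~\ref{lem:tighness}, the density marginal via the energy bound plus time regularity from the continuity equation and the Arzel\`a--Ascoli-type compact embedding into $C_w$, and the Wiener marginal trivially. The only minor refinement worth noting is that the continuity equation actually yields Lipschitz (hence $C^{0,1}$) regularity of $t\mapsto\varrho_\varepsilon(t)$ in $W^{-l,2}(K)$, which is stronger than the $C^\vartheta$ you state, though $C^\vartheta$ with any $\vartheta>0$ suffices for the compact embedding.
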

\begin{proof}
This is similar \cite[Lemma 6]{mensah2016existence} or \cite[Corollary 3.7]{breit2015incompressible}.
\end{proof}

\begin{prop}[Jakubowski-Skorokhod representation theorem]
\label{prop:Jakubow0}
For any subsequence $\{\nu^{\varepsilon_n,\delta_n}\,;\,n\in\mathbb{N}\}$, there exists a further subsequence (not relabelled), a probability space $(\tilde{\Omega}, \tilde{\mathscr{F}}, \tilde{\mathbb{P}})$ with $\chi^J$-valued  random variables 
\begin{align*}
(\hat{\varrho}, \hat{\mathbf{U}},\hat{\mathbf{m}}, \check{\varrho},\check{\mathbf{U}}, \check{\mathbf{m}} , \tilde{W})\text{ and }  (\hat{\varrho}_{\varepsilon_n}, \hat{\mathbf{u}}_{\varepsilon_n},\hat{\mathbf{m}}_{\varepsilon_n}, \check{\varrho}_{\delta_n},\check{\mathbf{u}}_{\delta_n}, \check{\mathbf{m}}_{\delta_n}, \tilde{W}_n),\quad n\in\mathbb{N}
\end{align*}
and $\varepsilon_n,\delta_n\in(0,1)$ such that as $\varepsilon_n,\delta_n\rightarrow0$ (corresponding to when $n\rightarrow\infty$), we have
\begin{enumerate}
\item $ \tilde{\mathbb{P}}((\hat{\varrho}_{\varepsilon_n}, \hat{\mathbf{u}}_{\varepsilon_n},\hat{\mathbf{m}}_{\varepsilon_n}, \check{\varrho}_{\delta_n},\check{\mathbf{u}}_{\delta_n}, \check{\mathbf{m}}_{\delta_n},\tilde{W}_n)\in \cdot)= \nu^{\varepsilon_n,\delta_n}(\cdot)$,
\item $ \tilde{\mathbb{P}}((\hat{\varrho}, \hat{\mathbf{U}},\hat{\mathbf{m}}, \check{\varrho},\check{\mathbf{U}}, \check{\mathbf{m}},\tilde{W})\in \cdot)= \nu(\cdot)$ is a Radon measure,
\item the sequences $(\hat{\varrho}_{\varepsilon_n}, \hat{\mathbf{u}}_{\varepsilon_n},\hat{\mathbf{m}}_{\varepsilon_n}, \check{\varrho}_{\delta_n},\check{\mathbf{u}}_{\delta_n}, \check{\mathbf{m}}_{\delta_n}, \tilde{W}_n)$ converges $\tilde{\mathbb{P}}$-a.s. to \newline $(\hat{\varrho}, \hat{\mathbf{U}},\hat{\mathbf{m}}, \check{\varrho},\check{\mathbf{U}}, \check{\mathbf{m}}, \tilde{W})$
in the topology of $\chi^J$.
\end{enumerate}
In particular,  the joint law of $(\hat{\varrho}_{\varepsilon_n}, \hat{\mathbf{u}}_{\varepsilon_n},\hat{\mathbf{m}}_{\varepsilon_n}, \check{\varrho}_{\delta_n},\check{\mathbf{u}}_{\delta_n}, \check{\mathbf{m}}_{\delta_n})$, i.e. $\mu^{\varepsilon_n,\delta_n}$, converges weakly to the measure $\mu(\cdot) =\tilde{\mathbb{P}}((\hat{\varrho}, \hat{\mathbf{U}},\hat{\mathbf{m}}, \check{\varrho},\check{\mathbf{U}}, \check{\mathbf{m}})\in \cdot)$.
\end{prop}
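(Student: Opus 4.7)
The proof reduces to verifying that the hypotheses of Jakubowski's generalization of the Skorokhod representation theorem are satisfied on the path space $\chi^J$, after which the statement is essentially an invocation. The structural prerequisite is that $\chi^J$ be quasi-Polish in the Jakubowski sense, namely, a topological space admitting a countable family of continuous $\mathbb{R}$-valued functions separating points. I would verify this factor by factor. For $\chi_\varrho = C_w([0,T];L^\gamma_{\mathrm{loc}}(\mathcal{O}))$ and for $\chi_{\ulcorner\varrho\mathbf{u}\urcorner} = C_w([0,T];L^{2\gamma/(\gamma+1)}_{\mathrm{loc}}(\mathcal{O}))$, a separating family is built from a countable dense subset $\{\psi_j\}\subset C^\infty_c(\mathcal{O})$ paired with a countable dense subset of times in $[0,T]$, via $t\mapsto \langle f(t),\psi_j\rangle$; one exhausts $\mathcal{O}$ by a countable family of compact sets to handle the Fr\'echet nature of the $L^p_{\mathrm{loc}}$ target. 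For $\chi_\mathbf{u}=(L^2(0,T;W^{1,2}(\mathcal{O})),w)$ the separability of the predual furnishes the required functionals, and $\chi_W=C([0,T];\mathfrak{U}_0)$ is Polish. A finite product of quasi-Polish spaces is quasi-Polish, so $\chi^J$ qualifies.

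With the structural property established, I would invoke the tightness of $\{\nu^{\varepsilon_n,\delta_n}\}_n$ on $\chi^J$ proved in the preceding lemma and apply Jakubowski's theorem to extract a further subsequence along which a Skorokhod-type representation holds. This produces a probability space $(\tilde\Omega,\tilde{\mathscr{F}},\tilde{\mathbb{P}})$ and new random variables $(\hat\varrho_{\varepsilon_n},\hat{\mathbf{u}}_{\varepsilon_n},\hat{\mathbf{m}}_{\varepsilon_n},\check\varrho_{\delta_n},\check{\mathbf{u}}_{\delta_n},\check{\mathbf{m}}_{\delta_n},\tilde W_n)$ whose joint laws on $\chi^J$ coincide with $\nu^{\varepsilon_n,\delta_n}$ and which converges $\tilde{\mathbb{P}}$-a.s. in the topology of $\chi^J$ to a limit $(\hat\varrho,\hat{\mathbf{U}},\hat{\mathbf{m}},\check\varrho,\check{\mathbf{U}},\check{\mathbf{m}},\tilde W)$. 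The law $\nu$ of this limit is automatically Radon because weak limits of tight sequences on quasi-Polish spaces inherit the Radon property, as built into Jakubowski's framework.

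The final assertion on $\mu^{\varepsilon_n,\delta_n}$ is then immediate from the continuous mapping theorem: the canonical projection $\pi:\chi^J\to\chi$ that discards the Wiener factor is continuous, the almost sure convergence under $\tilde{\mathbb{P}}$ yields weak convergence $\nu^{\varepsilon_n,\delta_n}\rightharpoonup \nu$, and pushing forward gives $\mu^{\varepsilon_n,\delta_n}=\pi_\ast \nu^{\varepsilon_n,\delta_n}\rightharpoonup \pi_\ast\nu=\mu$. The only delicate step is the verification of the countable separating family on the weak-topology and weakly-continuous-in-time factors; this is standard but requires care because the target spaces are not metrizable and $L^p_{\mathrm{loc}}$ is only Fr\'echet. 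Once that bookkeeping is in place, the proposition reduces to a direct citation of \cite{jakubowski1998short}.
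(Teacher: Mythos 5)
Your proposal is correct and follows the same route the paper takes: the paper states Proposition \ref{prop:Jakubow0} with no written proof, treating it as an immediate consequence of the tightness of $\{\nu^{\varepsilon,\delta}\}$ on $\chi^J$ established in the preceding lemma together with a direct citation of Jakubowski's representation theorem \cite{jakubowski1998short}. Your explicit verification that each factor of $\chi^J$ admits a countable separating family of continuous functionals (so $\chi^J$ is quasi-Polish), and the pushforward argument for the final weak-convergence claim, are exactly the standard background that the paper leaves implicit.
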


To extend this new probability space $(\tilde{\Omega}, \tilde{\mathscr{F}}, \tilde{\mathbb{P}})$ into a stochastic basis, we endow it with a filtration. To do this, let us first define a restriction operator $\textbf{r}_t$ by
\begin{align}
\label{continuousFunction}
\textbf{r}_t:X\rightarrow X\vert_{[0,t]}, \quad f\mapsto f\vert_{[0,t]},
\end{align}
for $t\in[0,T]$ and $X\in  \{ \chi_\varrho, \chi_{\mathbf{u}},  \chi_W \}$. We observe that $\textbf{r}_t$ is a continuous map. We can therefore construct $\tilde{\mathbb{P}}$-augmented canonical filtrations for 
\begin{align*}
(\hat{\varrho}_{\varepsilon_n}, \hat{\mathbf{u}}_{\varepsilon_n},
 \check{\varrho}_{\delta_n},\check{\mathbf{u}}_{\delta_n}, 
 \tilde{W}_n) 
 \text{ and }
  (\hat{\varrho}, \hat{\mathbf{U}},
  \check{\varrho},\check{\mathbf{U}}, 
   \tilde{W})
\end{align*}
 respectively by setting
\begin{align*}
\tilde{\mathscr{F}}^n_t &= \sigma\left( \sigma(\textbf{r}_t\hat{\varrho}_{\varepsilon_n}, \textbf{r}_t\hat{\mathbf{u}}_{\varepsilon_n},
 \textbf{r}_t\check{\varrho}_{\delta_n},\textbf{r}_t\check{\mathbf{u}}_{\delta_n}, 
  \textbf{r}_t\tilde{W}_n)  \cup  \{N\in\tilde{\mathscr{F}};\,\tilde{\mathbb{P}}(N)=0  \} \right),
\\
\tilde{\mathscr{F}}_t &= \sigma\left( \sigma(\textbf{r}_t\hat{\varrho}, \textbf{r}_t\hat{\mathbf{U}},
 \textbf{r}_t\check{\varrho},\textbf{r}_t\check{\mathbf{U}}, 
 \textbf{r}_t\tilde{W})  \cup  \{N\in\tilde{\mathscr{F}};\,\tilde{\mathbb{P}}(N)=0  \} \right),
\end{align*}
for $t\in[0,T]$.

\subsection{Identification of the limit}
\label{subsec:limits}
Having established the limits of the family of sequences in Proposition \ref{prop:Jakubow0}, we now identify them with  weak martingale solutions of \eqref{2dIncom}. In fact, as a consequence of the $2$-D uniqueness theorem given in Theorem \ref{thm:2duniqueness}, we show that the corresponding random variables coincides. We state this in the theorem below.

\begin{thm}
\label{the:important}
The pair $
 [(\tilde{\Omega}, \tilde{\mathscr{F}}_t,(\tilde{\mathscr{F}}_t),\tilde{\mathbb{P}}),\hat{\mathbf{U}}, \tilde{W}]$ and $[(\tilde{\Omega}, \tilde{\mathscr{F}}_t,(\tilde{\mathscr{F}}_t),\tilde{\mathbb{P}}),\check{\mathbf{U}}, \tilde{W}]$
are each a weak martingale solution of \eqref{2dIncom} in the sense of Definition \ref{def:weakSol}  defined on the same stochastic basis. Moreover
\begin{align}
\label{uniqueRV}
\hat{\varrho}=\check{\varrho}=1,\quad \hat{\mathbf{m}}=\hat{\mathbf{U}}_h,\quad \check{\mathbf{m}}=\check{\mathbf{U}}_h
\end{align}
$\tilde{\mathbb{P}}$-a.s. where $\hat{\mathbf{U}}_h=\hat{\mathbf{U}}_h(x_h)$ satisfies $\hat{\mathbf{U}}=[\hat{\mathbf{U}}_h(x_h),0]$ and similarly for $\check{\mathbf{U}}$.
\end{thm}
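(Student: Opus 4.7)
The plan is to prove this theorem in two parts: first identify the limits $\hat\varrho,\hat{\mathbf{m}}$ (and similarly the barred versions) as in \eqref{uniqueRV}, then pass to the limit in the weak formulation to verify that $\hat{\mathbf{U}}$ and $\check{\mathbf{U}}$ are each weak martingale solutions of \eqref{2dIncom}. Since the two triples differ only in which approximating subsequence is relabelled by the Skorokhod theorem, the argument for $(\hat\varrho,\hat{\mathbf{U}},\hat{\mathbf{m}})$ is verbatim the same as for $(\check\varrho,\check{\mathbf{U}},\check{\mathbf{m}})$, so I will describe only the former.

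The identification $\hat\varrho=1$ is immediate: by Proposition \ref{prop:Jakubow0}, $\hat\varrho_{\varepsilon_n}\to\hat\varrho$ in $\chi_\varrho$ $\tilde{\mathbb{P}}$-a.s., while Lemma \ref{lem:strongDensity} together with equality of laws on $\chi_\varrho$ forces $\hat\varrho_{\varepsilon_n}\to 1$ in $L^p(\tilde\Omega;L^\infty(0,T;L^{\min\{2,\gamma\}}_{\mathrm{loc}}))$. The vanishing of the vertical component of $\hat{\mathbf{U}}$ and of $\hat{\mathbf{m}}$ follows from the symmetry assumption \eqref{oddAndEven} which is preserved under the a.s.\ limit. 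To obtain $\hat{\mathbf{m}}=\hat{\mathbf{U}}_h$, I would decompose the momentum as $\varrho_\varepsilon\mathbf{u}_\varepsilon=\mathcal{P}(\varrho_\varepsilon\mathbf{u}_\varepsilon)+\mathcal{Q}(\varrho_\varepsilon\mathbf{u}_\varepsilon)$ and then split the solenoidal part into its vertical mean $\ulcorner\mathcal{P}(\varrho_\varepsilon\mathbf{u}_\varepsilon)\urcorner$ and its oscillatory residual. The gradient part $\mathcal{Q}(\varrho_\varepsilon\mathbf{u}_\varepsilon)$ is driven by the acoustic system \eqref{acousticSPD00}, and the Strichartz-type bounds that produce Lemma \ref{lem:uniformGrad} (Section \ref{subsec:acoustic}) show it converges strongly to zero in the relevant space. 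The oscillatory part of the solenoidal piece is handled in Section \ref{sec:oscillatoryMomentum} and does not contribute to the limit. Combined with the strong convergence $\hat\varrho_{\varepsilon_n}\to 1$ and the weak convergence $\hat{\mathbf{u}}_{\varepsilon_n}\rightharpoonup\hat{\mathbf{U}}$, the results of Section \ref{sec:verticalSolenoidal} then give $\hat{\mathbf{m}}=\ulcorner\hat{\mathbf{U}}\urcorner_h=\hat{\mathbf{U}}_h$ (the last equality since $\hat{\mathbf{U}}$ is already $x_3$-independent in the limit).

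To verify the weak formulation \eqref{weak2d}, I would test the momentum equation from Definition \ref{def:martSolution}(7) with $\mathcal{P}\bm\phi$ for $\bm\phi\in C^\infty_{c,\mathrm{div}}(\mathbb{R}^2)$ extended trivially in $x_3$, and take vertical averages. The pressure $\varepsilon^{-2m}\nabla p(\varrho_\varepsilon)$ and the Taylor-corrected pressure of Lemma \ref{lem:taylorIsentropicPressure} are gradients and so are annihilated by $\mathcal{P}$. The Coriolis term becomes, after vertical averaging, a curl-free vector by \eqref{curl} and is likewise killed by $\mathcal{P}$. The centrifugal term $\varepsilon^{m-2}r_\varepsilon\nabla G$ vanishes in the limit because $m>10$ implies $\varepsilon^{m-2}\to 0$ while $r_\varepsilon\nabla G$ is bounded in $L^p(\Omega;L^\infty(0,T;L^{\min\{2,\gamma\}}_{\mathrm{loc}}))$ by \eqref{limDenseAndCentri}. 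For the viscous term, linearity plus the weak convergence $\hat{\mathbf{u}}_{\varepsilon_n}\rightharpoonup\hat{\mathbf{U}}$ in $L^2(0,T;W^{1,2}_{\mathrm{loc}})$ suffice. The convective term is the most delicate: passage to the limit $\varrho_{\varepsilon_n}\mathbf{u}_{\varepsilon_n}\otimes\mathbf{u}_{\varepsilon_n}\to\hat{\mathbf{U}}_h\otimes\hat{\mathbf{U}}_h$ (in a distributional sense compatible with testing against $\nabla\mathcal{P}\bm\phi$) is supplied by Lemma \ref{lem:convectiveConvergence} of Section \ref{sec:convectiveTerm}. Finally, the stochastic integral is handled by a standard martingale characterisation argument: using \eqref{noiseEst}, \eqref{noiseEst2D}, and the a.s.\ Skorokhod convergence, one shows that the quadratic variation and cross variation processes pass to those of $\int_0^\cdot\mathcal{P}\Phi(1,\hat{\mathbf{U}}_h)\mathrm{d}\tilde W$, which identifies the limiting stochastic integral in \eqref{2dIncom}.

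The principal obstacle is the simultaneous control of the pressure and acoustic waves on the unbounded domain $\mathcal{O}$; this is precisely where the Strichartz estimates for the acoustic system and the scaling $m>10$ enter to make the residual forcing $\varepsilon^{m-2}r_\varepsilon\nabla G$ and the dispersive gradient part of the momentum disappear. Once both triples $(\hat{\mathbf{U}},\tilde W)$ and $(\check{\mathbf{U}},\tilde W)$ have been shown to be weak martingale solutions of \eqref{2dIncom} with the same initial law $\mathbf{U}_{h,0}$ on the common stochastic basis $(\tilde\Omega,\tilde{\mathscr{F}},(\tilde{\mathscr{F}}_t),\tilde{\mathbb{P}})$, the pathwise uniqueness provided by Theorem \ref{thm:2duniqueness} (applied to each realisation $\tilde\omega$) forces them to be indistinguishable, which will be the input needed in Section \ref{subsec:pathwise} for the Gy\"ongy--Krylov characterisation of convergence in probability.
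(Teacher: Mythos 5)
Your overall strategy—identify $\hat\varrho=1$ via Lemma \ref{lem:strongDensity} and equality of laws, decompose momentum into gradient, vertical-mean-solenoidal, and oscillatory parts, pass to the limit term by term in the weak formulation with $\mathcal{P}\bm\phi$, and invoke Lemma \ref{lem:convectiveConvergence} for the convective term and a martingale identification for the stochastic integral—does follow the paper's proof rather faithfully. There is, however, a genuine gap in the step where you claim that the limit velocity has the required two-dimensional structure $\hat{\mathbf{U}}=[\hat{\mathbf{U}}_h(x_h),0]$. You assert that the vanishing of the vertical component of $\hat{\mathbf{U}}$ follows from the symmetry assumption \eqref{oddAndEven}, and later that ``$\hat{\mathbf{U}}$ is already $x_3$-independent in the limit'' without proof. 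The symmetry only yields that $\hat U_3$ is an odd function of $x_3$, hence vanishes at $x_3=0$, not that $\hat U_3\equiv 0$; and it certainly does not make $\hat U^1,\hat U^2$ independent of $x_3$. The correct argument, which the paper supplies in an unnumbered lemma just after \eqref{acousticMomen}, combines three ingredients: (i) the incompressibility $\mathrm{div}\,\hat{\mathbf{U}}=0$ obtained from the continuity equation and Lemma \ref{lem:strongDensity}; (ii) the fact (Section \ref{sec:Coriolis}) that $\mathcal{P}(\mathbf{e}_3\times\hat{\mathbf{U}})=0$, so that $\mathbf{e}_3\times\hat{\mathbf{U}}=\nabla\hat\psi$ for a potential $\hat\psi$, which via \eqref{potentials} forces $\partial_{x_3}\hat U^1=\partial_{x_3}\hat U^2=0$; and (iii) the incompressibility constraint together with (ii) then yields $\partial_{x_3}\hat U^3=0$, so the boundary condition $\hat U_3(x_h,1)=0$ gives $\hat U_3\equiv 0$. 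This Taylor–Proudman–type argument is the essential mechanism by which the Coriolis force produces the dimensional reduction, and it is absent from your proposal. Without it, the identity $\hat{\mathbf{m}}=\ulcorner\hat{\mathbf{U}}\urcorner_h=\hat{\mathbf{U}}_h$ is unjustified, and the functional $M(1,\hat{\mathbf{U}},\hat{\mathbf{U}})$ you later construct would not reduce to the purely two-dimensional weak form \eqref{weak2d}.
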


The proof of Theorem \ref{the:important} follows several steps. First of all, we show that on the new probability
space $(\tilde{\Omega},\tilde{\mathscr{F}_t},\tilde{\mathbb{P}})$, any pair of approximate subsequence of functions $\left(\hat{\varrho}_{\varepsilon_n},\hat{\mathbf{u}}_{\varepsilon_n}\right)$ and $\left( \check{\varrho}_{\delta_n},  \check{\mathbf{u}}_{\delta_n}\right)$ also solves the system \eqref{comprSPDE0}.

\begin{lem}
\label{lem:WeinerSeq}
The following subsequence which are defined on the same stochastic basis
\begin{align*}
 [ (\tilde{\Omega}, \tilde{\mathscr{F}}_t,(\tilde{\mathscr{F}}^n_t),\tilde{\mathbb{P}}),\hat{\varrho}_{\varepsilon_n}, \hat{\mathbf{u}}_{\varepsilon_n}, \tilde{W}_n ] \text{ and } [ (\tilde{\Omega}, \tilde{\mathscr{F}}^n_t,(\tilde{\mathscr{F}}_t),\tilde{\mathbb{P}}),\check{\varrho}_{\delta_n},\check{\mathbf{u}}_{\delta_n}, \tilde{W}_n ]
\end{align*} 
are each a finite energy weak martingale solutions of \eqref{comprSPDE0} with initial law $\Lambda_n$
\end{lem}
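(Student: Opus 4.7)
The plan is to transfer all the pathwise structural properties of the original family $[(\Omega,\mathscr{F},(\mathscr{F}_t),\mathbb{P});\varrho_\varepsilon,\mathbf{u}_\varepsilon,W]$ to the new copies on $(\tilde\Omega,\tilde{\mathscr F},\tilde{\mathbb P})$ by invoking equality of laws, and then to recover the stochastic momentum equation by a martingale identification argument. The energy bounds of Lemma~\ref{uniformBounds} and the renormalized continuity equation \eqref{renormalizedCont} depend only on the law of $(\varrho_{\varepsilon_n},\mathbf{u}_{\varepsilon_n},W)$, and hence lift immediately to $(\hat\varrho_{\varepsilon_n},\hat{\mathbf{u}}_{\varepsilon_n},\tilde W_n)$ and $(\check\varrho_{\delta_n},\check{\mathbf{u}}_{\delta_n},\tilde W_n)$ by Proposition~\ref{prop:Jakubow0}(i). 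The filtrations $(\tilde{\mathscr F}_t^n)$ were constructed precisely so that $\tilde W_n$ is an $(\tilde{\mathscr F}_t^n)$-cylindrical Wiener process and $\hat{\mathbf{u}}_{\varepsilon_n}$, $\check{\mathbf{u}}_{\delta_n}$ are $(\tilde{\mathscr F}_t^n)$-adapted.

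The main task is to verify the weak form of the momentum equation together with the stochastic integral. I would follow the now standard strategy of Bensoussan/Debussche--Glatt-Holtz--Temam. For a fixed $\bm\phi\in C_c^\infty(\mathcal O)$ define on the original basis the processes
\begin{align*}
M_\varepsilon(t) &= \langle \varrho_\varepsilon\mathbf{u}_\varepsilon(t),\bm\phi\rangle - \langle \varrho_{\varepsilon,0}\mathbf{u}_{\varepsilon,0},\bm\phi\rangle - \int_0^t \mathcal{A}_\varepsilon(r)\,\mathrm{d}r,\\
N_\varepsilon(t) &= M_\varepsilon(t)^2 - \int_0^t \sum_{k\in\mathbb N}\bigl|\langle \mathbf{g}_k(\cdot,\varrho_\varepsilon,\varrho_\varepsilon\mathbf{u}_\varepsilon),\bm\phi\rangle\bigr|^2\mathrm{d}r,\\
N_\varepsilon^k(t) &= M_\varepsilon(t)\,\beta_k(t) - \int_0^t \langle \mathbf{g}_k(\cdot,\varrho_\varepsilon,\varrho_\varepsilon\mathbf{u}_\varepsilon),\bm\phi\rangle\,\mathrm{d}r,
\end{align*}
where $\mathcal{A}_\varepsilon$ collects the deterministic (convective, viscous, pressure, Coriolis, centrifugal) terms on the right-hand side of Definition~\ref{def:martSolution}(7). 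By construction these are $(\mathscr F_t)$-martingales. I would then take continuous bounded functionals $\Theta$ of the restrictions $\mathbf{r}_s(\varrho_\varepsilon,\mathbf{u}_\varepsilon,W)$ for $s\le t_1<t_2\le T$ and, using the moment estimates \eqref{velo}--\eqref{convectiv} together with the noise bound \eqref{noiseEst}, pass $\varepsilon$-limits using the Vitali convergence theorem to obtain
\[
\tilde{\mathbb E}\bigl[\Theta\,(M_n(t_2)-M_n(t_1))\bigr]=0,\quad
\tilde{\mathbb E}\bigl[\Theta\,(N_n(t_2)-N_n(t_1))\bigr]=0,\quad
\tilde{\mathbb E}\bigl[\Theta\,(N_n^k(t_2)-N_n^k(t_1))\bigr]=0
\]
for the analogous processes $M_n,N_n,N_n^k$ built from $(\hat\varrho_{\varepsilon_n},\hat{\mathbf u}_{\varepsilon_n},\tilde W_n)$. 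Equality of laws on every finite cylinder combined with continuity of the restriction operator \eqref{continuousFunction} ensures that this transfer is valid; crucially the uniform bounds in Lemma~\ref{uniformBounds} provide the uniform integrability needed to interchange limits and expectations.

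Once $M_n$, $N_n$, $N_n^k$ are identified as $(\tilde{\mathscr F}_t^n)$-martingales of the stated quadratic/cross variation, a classical martingale representation argument (see e.g.\ \cite{breit2017stoch}) identifies
\[
M_n(t)=\int_0^t\langle \Phi(\hat\varrho_{\varepsilon_n},\hat\varrho_{\varepsilon_n}\hat{\mathbf u}_{\varepsilon_n}),\bm\phi\rangle\,\mathrm d\tilde W_n,
\]
which is exactly Definition~\ref{def:martSolution}(7) on the new basis. The renormalized continuity equation \eqref{renormalizedCont} and the energy inequality \eqref{relativeEnergy} are preserved simply by equality of laws, using Skorokhod's theorem and Fatou's lemma for the sup in time and the dissipation integral, together with the martingale property of the remainder term $M_R^\varepsilon$. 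The verbatim argument applies to the $\delta_n$-family.

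The main obstacle, as usual in this compressible setting, is the passage to the limit in the nonlinear pressure and convective terms when identifying $M_n$ and $N_n$ with the continuous-time processes built from $(\hat\varrho_{\varepsilon_n},\hat{\mathbf u}_{\varepsilon_n})$. Here however we are \emph{not} sending $n\to\infty$ — we only need equality of functionals evaluated at the same $n$ — so the difficulty reduces to showing that all the integrands appearing above are well-defined $\tilde{\mathbb P}$-a.s.\ and integrable enough to apply Fubini. This is guaranteed by the transferred versions of \eqref{velo}--\eqref{convectiv} and by \eqref{noiseEst} with $(\varrho,\mathbf m)$ replaced by $(\hat\varrho_{\varepsilon_n},\hat\varrho_{\varepsilon_n}\hat{\mathbf u}_{\varepsilon_n})$, whose expectations coincide with those on the original space. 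Hence both families qualify as finite energy weak martingale solutions with the prescribed initial law $\Lambda_n$.
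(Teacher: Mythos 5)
Your proposal is correct and unpacks exactly the content of the general result that the paper's one-line proof cites, namely \cite[Theorem 2.9.1]{breit2017stoch}: all structural properties and the martingale/quadratic-variation characterizations of the stochastic integral transfer to the Skorokhod copies by equality of laws and continuity of the restriction maps $\mathbf{r}_t$. Note, however, that (as you yourself point out at the end) no $\varepsilon$-limit is actually taken in this step — the identities $\tilde{\mathbb E}[\Theta\, (M_n(t_2)-M_n(t_1))]=0$ etc.\ hold for each fixed $n$ directly because $\Theta\, (M_n(t_2)-M_n(t_1))$ is a measurable functional of $(\hat\varrho_{\varepsilon_n},\hat{\mathbf u}_{\varepsilon_n},\tilde W_n)$ whose law coincides with that of the corresponding functional on the original space, so the earlier appeal to Vitali convergence is not needed here.
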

\begin{proof}
This follows from the equality of laws
from Proposition \ref{prop:Jakubow0} and a general theorem given in \cite[Theorem 2.9.1]{breit2017stoch}.
%This is precisely the same result as \cite[Proposition 6]{mensah2016existence} or \cite[Proposition 3.10]{breit2015incompressible} applied separately to each of the sequences. 
\end{proof}

Until stated otherwise, we now concentrate our attention on the analysis of the sequence $ \big(\hat{\varrho}_{\varepsilon_n}, \hat{\mathbf{u}}_{\varepsilon_n}, \tilde{W}_{n}\big)$, which as stated earlier, shares the same stochastic basis with the sequence $\big(\check{\varrho}_{\delta_n},\check{\mathbf{u}}_{\delta_n}, \tilde{W}_{n}\big)$. Analysis of the latter is mere repetition.

\section{Proof of Theorem \ref{thm:mainRo}}
\label{sec:mainThmContin}
In the previous section, we have obtained two limits of a pair of sequences that have been shown to be weak martingale solutions to \eqref{2dIncom} in the sense of Definition \ref{def:weakSol}. In this section, we primarily wish to apply the $2$-D uniqueness Theorem \ref{thm:2duniqueness} to obtain  convergence of our original sequence to a limit random variable that will solve \eqref{2dIncom} in the pathwise sense given by Definition  \ref{def:pathSol}.

To do this however, we first give a rigorous analysis into the momentum function by studying its various components. We will then follow this by passing to the limit in the convective term before finally, completing the proof of our main theorem.

\subsection{Acoustic equation and its Strichartz estimates}
\label{subsec:acoustic}
In this section, we establish dispersive estimates for the acoustic wave equation obtained by projecting the vector quantities in the momentum balance equation  unto gradient vector fields via Helmholtz decomposition.
Our main result in this section is the proof of the following lemma.

\begin{lem}
Let $\Delta^{-1}_{\mathcal{O}}$ represent the inverse of the Laplace operator on $\mathcal{O}=\mathbb{R}^2\times\mathbb{T}_1$, let $\eta_\varepsilon$ be as defined in \eqref{newCufOff} and set $\mathcal{Q}= \nabla\Delta^{-1}_{\mathcal{O}}\mathrm{div}$.
Then there exist a subsequence (not relabelled) such that the following $\tilde{\mathbb{P}}$-a.s convergence holds
\begin{align}
\label{acousticMomen}
\mathcal{Q}(\eta_\varepsilon \hat{\varrho}_{\varepsilon_n}\hat{\mathbf{u}}_{\varepsilon_n}) \, \rightarrow \, 0 &\quad\text{in}\quad L^2(0,T;L^\frac{2\gamma}{\gamma+1}_{\mathrm{loc}}(\mathcal{O})), 
\end{align}
as $n\rightarrow \infty$. 
\end{lem}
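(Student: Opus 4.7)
The plan is to apply the Helmholtz projection $\mathcal{Q}$ to the momentum balance \eqref{acousticSPD00} and combine it with the continuity equation to derive an acoustic wave system for the pair $(r_\varepsilon, \mathbf{V}_\varepsilon)$, where $\mathbf{V}_\varepsilon := \mathcal{Q}(\varrho_\varepsilon\mathbf{u}_\varepsilon)$. Since $\mathrm{div}\,\mathbf{V}_\varepsilon = \mathrm{div}(\varrho_\varepsilon\mathbf{u}_\varepsilon)$ and $\mathcal{Q}\nabla r_\varepsilon = \nabla r_\varepsilon$, this system reads, formally,
\begin{align*}
\varepsilon^m\partial_t r_\varepsilon + \mathrm{div}\,\mathbf{V}_\varepsilon = 0, \quad \varepsilon^m\mathrm{d}\mathbf{V}_\varepsilon + \gamma\nabla r_\varepsilon\,\mathrm{d}t = \mathbf{H}_\varepsilon\,\mathrm{d}t + \varepsilon^m\mathcal{Q}\Phi(\varrho_\varepsilon,\varrho_\varepsilon\mathbf{u}_\varepsilon)\,\mathrm{d}W,
\end{align*}
with $\mathbf{H}_\varepsilon := -\varepsilon^{m-1}\mathcal{Q}(\mathbf{e}_3\times\varrho_\varepsilon\mathbf{u}_\varepsilon) + \varepsilon^m\mathcal{Q}\mathbf{F}_\varepsilon + \varepsilon^{2(m-1)}\mathcal{Q}(r_\varepsilon\nabla G)$. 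Eliminating $r_\varepsilon$ yields a wave equation for $\mathbf{V}_\varepsilon$ of propagation speed $\sqrt{\gamma}/\varepsilon^m$, driven by the random forcing $\mathbf{H}_\varepsilon$ and the above stochastic term.

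Next I would regularise $(r_\varepsilon,\mathbf{V}_\varepsilon)$ by frequency truncation so as to put the resulting system into mild (Duhamel) form through the acoustic semigroup generated by $-\nabla\Delta^{-1}_{\mathcal{O}}\mathrm{div}$ on $L^2(\mathcal{O})$. Exploiting that $\mathcal{O} = \mathbb{R}^2\times\mathbb{T}_1$, I would decompose every field in Fourier series along the periodic direction $x_3$: the zero vertical mode satisfies a 2-D acoustic wave equation on $\mathbb{R}^2$, while the $k$-th mode with $k\neq 0$ satisfies a Klein--Gordon equation on $\mathbb{R}^2$ whose mass is of order $|k|/\varepsilon^m$.

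I would then invoke Strichartz-type estimates for the 2-D wave and Klein--Gordon semigroups, which, after the natural rescaling $\tau = t/\varepsilon^m$, produce a gain of a positive power of $\varepsilon$ in the norm $L^2(0,T;L^q(B))$ for suitable $q>2$ and every ball $B\subset\mathcal{O}$. The various source terms in $\mathbf{H}_\varepsilon$ are then absorbed using the uniform moments gathered in Section \ref{subsec:energyineq}: bound \eqref{momentum} handles the Coriolis contribution, \eqref{allForces1} controls $\mathbf{F}_\varepsilon$, \eqref{limDenseAndCentri} takes care of the centrifugal term, and \eqref{noiseEst} together with the Burkholder--Davis--Gundy inequality deals with the stochastic integral. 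Summing over vertical modes, removing the regularisation by a diagonal passage, and interpolating with \eqref{momentum} to reach the exponent $q = 2\gamma/(\gamma+1)$ yields $\mathbb{E}\,\|\mathbf{V}_\varepsilon\|_{L^2(0,T;L^{2\gamma/(\gamma+1)}(B))}^2 \to 0$ for every ball $B\subset\mathcal{O}$; convergence in probability and, on a further subsequence, the announced $\tilde{\mathbb{P}}$-a.s.\ convergence then follow by standard arguments.

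The principal obstacle is the Coriolis term, which enters $\mathbf{H}_\varepsilon$ with amplitude $\varepsilon^{m-1}$, i.e.\ one inverse power of $\varepsilon$ worse than the remaining forcing; the dispersive gain from Strichartz must be large enough to cover this loss. This balance, together with the quadratic convective term inside $\mathbf{F}_\varepsilon$ and the fact that the forcing is only controlled in a negative Sobolev norm via \eqref{allForces1}, is precisely what forces the exponent $m>10$ imposed in Theorem \ref{thm:mainRo}.
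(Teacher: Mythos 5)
Your plan reproduces the paper's high-level skeleton correctly: project onto gradient fields, recast the system in mild form via the acoustic semigroup, exploit the $\varepsilon^m$ gain coming from rescaling time in a local-smoothing/Strichartz estimate (the paper's Lemma~\ref{lem:strichartz}, lifted from \cite{feireisl2012multi}), and absorb the source terms using the uniform moments. The Fourier splitting along $\mathbb{T}_1$ into a 2-D wave mode and Klein--Gordon modes is not really a different route — it is essentially how Lemma~\ref{lem:strichartz} is proved — and is fine as far as it goes.

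However, there is a genuine gap: you omit entirely the spatial cut-off $\eta_\varepsilon$ of \eqref{newCufOff}, supported in a ball of radius $\sim\varepsilon^{-\alpha}$, with $|\nabla\eta_\varepsilon|\lesssim\varepsilon^\alpha$. On $\mathcal{O}=\mathbb{R}^2\times\mathbb{T}_1$ this is not a convenience but a necessity before the Duhamel/Strichartz machinery can be run. The centrifugal potential $\nabla G$ grows like $|x_h|$ at spatial infinity, so the term $\varepsilon^{m-2}r_\varepsilon\nabla G$ appearing in your $\mathbf{H}_\varepsilon$ is not in a global Lebesgue/Sobolev space; the bound $|\overline{\varrho}_\varepsilon - 1|\lesssim_k\varepsilon^{2(m-1-\alpha)}$ of \eqref{densityAndOne}, which is what controls the rescaled pressure via Lemma~\ref{lem:taylorIsentropicPressure}, holds only on balls of radius $k\varepsilon^{-\alpha}$; and all the moments in Lemma~\ref{uniformBounds} are merely local. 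The cut-off localizes at precisely the scale where these estimates are valid, at the cost of cut-off derivative terms with prefactor $\varepsilon^\alpha$; the forcing prefactors $A_\varepsilon$, $B_\varepsilon$ in \eqref{aAndb} then collect contributions $\varepsilon^m$, $\varepsilon^{m-1}$, $\varepsilon^\alpha$ and $\varepsilon^{2(m-1-\alpha)}$, and making $D_\varepsilon^2\,\varepsilon^m\to 0$ requires $\alpha\in\big(1+\tfrac{m}{2},\,\tfrac{3m}{4}-\tfrac{3}{2}\big)$, i.e. $m>10$. Consequently your attribution of the threshold $m>10$ to the Coriolis amplitude $\varepsilon^{m-1}$ is inaccurate: after the $\varepsilon^m$ dispersive gain, the Coriolis contribution alone gives $\varepsilon^{m-2}$ and would only require $m>2$. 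The binding constraint is the interplay between the cut-off derivative $\varepsilon^\alpha$ and the static-density/pressure residual $\varepsilon^{2(m-1-\alpha)}$ — a mechanism your proposal never introduces.
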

\begin{proof}
Here, we follow the approach of \cite[Section 3.2.1]{feireisl2012multi} applied to the mild form of the mass and momentum balance equation. 
%As in \cite[Eq. 3.3]{feireisl2012multi}, we introduced the smooth family of cut-off functions $\eta_\varepsilon$ satisfying
%\begin{equation}
%\begin{aligned}
%\label{newCufOff}
%&\eta_\varepsilon \in C^\infty_c(\mathbb{R}^2), \quad 0\leq \eta_\varepsilon \leq 1, \quad \eta_\varepsilon (x_h)\equiv1 \text{ in }  B_{\varepsilon^{-\alpha}}, 
%\\
%&\eta_\varepsilon (x_h)=0 \text{ if }  \vert x_h\vert \geq 2\varepsilon^{-\alpha},\quad  \big\vert \nabla \eta_\varepsilon (x_h) \big\vert \leq 2 \varepsilon^\alpha  \text{ for }x_h\in\mathbb{R}^2
%\end{aligned}
%\end{equation}
%where since $m>10$,  we can choose $\alpha$ in \eqref{newCufOff} such that
%\begin{align}
%\label{alpha}
%1+\frac{m}{2}<\alpha< \frac{3m}{4}-\frac{3}{2}.
%\end{align}
\\Given Proposition \ref{prop:Jakubow0} and Lemma \ref{lem:WeinerSeq}, we expect $(\hat{\varrho}_{\varepsilon_n},\hat{\mathbf{u}}_{\varepsilon_n})$ to be a weak solution of \eqref{acousticSPD00}. By multiplying the continuity equation \eqref{acousticSPD00}$_1$ with the cut-off function $\eta_\varepsilon $ introduced in \eqref{newCufOff}, we expect $(\hat{\varrho}_{\varepsilon_n},\hat{\mathbf{u}}_{\varepsilon_n})$ to be a distributional solution of
\begin{equation}
\begin{aligned}
\label{contEqCutoff}
\varepsilon^m \,\mathrm{d}\big[\eta_\varepsilon  \hat{r}_{\varepsilon_n}\big] +  \mathrm{div}\big[\eta_\varepsilon  (\hat{\varrho}_{\varepsilon_n} \hat{\mathbf{u}}_{\varepsilon_n})\big]\mathrm{d}t   &=\nabla \eta_\varepsilon \cdot \big(\hat{\varrho}_{\varepsilon_n} \hat{\mathbf{u}}_{\varepsilon_n}\big)\,\mathrm{d}t.
\end{aligned}
\end{equation}
However by using \eqref{newCufOff}, in particular $\vert \nabla \eta_\varepsilon (x_h) \vert \leq 2 \varepsilon^\alpha $, we can conclude from \eqref{momentum} and Proposition \ref{prop:Jakubow0} that
\begin{align}
\label{oddInequality}
 \nabla \eta_\varepsilon \cdot \big(\hat{\varrho}_{\varepsilon_n} \hat{\mathbf{u}}_{\varepsilon_n}\big)
= \varepsilon^{\alpha}
 \, \hat{F}_{\varepsilon_n}
\end{align}
for some $\hat{F}_{\varepsilon_n}$ belonging to
\begin{align}
\label{forceScaler}
 \hat{F}_{\varepsilon_n} \in L^p \big(  \tilde{\Omega}; L^\infty(0,T;L^\frac{2\gamma}{\gamma+1}_{\mathrm{loc}}\mathcal{O})\big)
\end{align}
uniformly in $\varepsilon_n$. 
%Notice that the regularity now holds globally in space since we can extend the function by zeroes outside the support of the cut-off function. 
We can therefore rewrite \eqref{contEqCutoff} as
\begin{equation}
\begin{aligned}
\label{contEqCutoff1}
\varepsilon^m 
\big\langle \eta_\varepsilon  \hat{r}_{\varepsilon_n}(t) \,,\, \varphi \big\rangle 
&- 
\int_0^t \big\langle \eta_\varepsilon  (\hat{\varrho}_{\varepsilon_n} \hat{\mathbf{u}}_{\varepsilon_n}) \,,\,\nabla \varphi
\big\rangle\mathrm{d}s   
=
\varepsilon^m 
\big\langle \eta_\varepsilon  \hat{r}_{\varepsilon_n}(0) \,,\, \varphi \big\rangle
\\&+
\varepsilon^{\alpha} 
\int_0^t \big\langle \hat{F}_{\varepsilon_n} \,,\, \varphi \big\rangle\mathrm{d}s
\end{aligned}
\end{equation}
for all $t\in[0,T]$ and $\varphi \in C^\infty_c(\mathbb{R}^3)$.
Similar to \eqref{contEqCutoff}, the corresponding momentum balance equation \eqref{acousticSPD00}$_2$ becomes
\begin{equation}
\begin{aligned}
\label{momEqCutoff}
 &\varepsilon^m\, \mathrm{d}\big[\eta_\varepsilon (\hat{\varrho}_{\varepsilon_n} \hat{\mathbf{u}}_{\varepsilon_n})\big] + \gamma \nabla \big[\eta_\varepsilon \hat{r}_{\varepsilon_n}\big]\mathrm{d}t 
 =  \varepsilon^m \, \mathrm{div}\big[\eta_\varepsilon \mathbb{S}(\nabla \hat{\mathbf{u}}_{\varepsilon_n})\big]\mathrm{d}t
 \\&-
 \varepsilon^m \, \mathrm{div}\big[ \eta_\varepsilon  (\hat{\varrho}_{\varepsilon_n} \hat{\mathbf{u}}_{\varepsilon_n} \otimes \hat{\mathbf{u}}_{\varepsilon_n})\big]\mathrm{d}t
 -
 \varepsilon^{m-1}\eta_\varepsilon  \big(\mathbf{e}_3\times \hat{\varrho}_{\varepsilon_n} \hat{\mathbf{u}}_{\varepsilon_n} \big)\,\mathrm{d}t
\\
&-  
\frac{1}{\varepsilon^m}\nabla \big[\eta_\varepsilon \big(\hat{\varrho}_{\varepsilon_n}^\gamma -\gamma(\hat{\varrho}_{\varepsilon_n}-\overline{\varrho}_{\varepsilon}) - \overline{\varrho}_{\varepsilon}^\gamma\big)\big]\mathrm{d}t
 \\
& +
 \varepsilon^{2(m-1)}\eta_\varepsilon (\hat{r}_{\varepsilon_n} \nabla G)\,\mathrm{d}t
 +
  \varepsilon^m\,  \eta_\varepsilon  \,\Phi(\hat{\varrho}_{\varepsilon_n},\hat{\varrho}_{\varepsilon_n} \hat{\mathbf{u}}_{\varepsilon_n})\mathrm{d}\tilde{W}_{n}
 \\
 &-
 \varepsilon^m\,\nabla \eta_\varepsilon  \cdot \big[ \mathbb{S}(\nabla \hat{\mathbf{u}}_{\varepsilon_n})\big] \mathrm{d}t
+
 \varepsilon^m\,\nabla \eta_\varepsilon  \cdot 
 (\hat{\varrho}_{\varepsilon_n} \hat{\mathbf{u}}_{\varepsilon_n} \otimes \hat{\mathbf{u}}_{\varepsilon_n}) \mathrm{d}t
\\
 &+ \gamma \nabla \eta_\varepsilon  \, \hat{r}_{\varepsilon_n}\mathrm{d}t
 +
  \nabla\eta_\varepsilon \,\frac{1}{\varepsilon^m} \big[\hat{\varrho}_{\varepsilon_n}^\gamma -\gamma(\hat{\varrho}_{\varepsilon_n}-\overline{\varrho}_{\varepsilon}) - \overline{\varrho}_{\varepsilon}^\gamma\big]\mathrm{d}t
    \\&
  =:\sum_{j=1}^{10}I_j
\end{aligned}
\end{equation}
which is to be understood in the distributional sense, i.e. for $t\in[0,T]$, the equality
\begin{equation}
\begin{aligned}
\label{momEqCutoff}
 &\varepsilon^m\big\langle\eta_\varepsilon (\hat{\varrho}_{\varepsilon_n} \hat{\mathbf{u}}_{\varepsilon_n})(t),\bm{\varphi}\big\rangle
-\gamma
\int_0^t \big\langle\eta_\varepsilon \hat{r}_{\varepsilon_n}, \mathrm{div}\bm{\varphi}\big\rangle \,\mathrm{d}s 
\\& 
= 
 \varepsilon^m
  \big\langle\eta_\varepsilon (\hat{\varrho}_{\varepsilon_n} \hat{\mathbf{u}}_{\varepsilon_n})(0) ,\bm{\varphi}\big\rangle
- 
\varepsilon^m
\int_0^t \big\langle  \eta_\varepsilon \mathbb{S}(\nabla \hat{\mathbf{u}}_{\varepsilon_n}) , \nabla\bm\varphi \big\rangle\mathrm{d}s
\\&
+
\varepsilon^m
\int_0^t \big\langle   \eta_\varepsilon  (\hat{\varrho}_{\varepsilon_n} \hat{\mathbf{u}}_{\varepsilon_n} \otimes \hat{\mathbf{u}}_{\varepsilon_n}) , \nabla \bm{\varphi}\big\rangle \mathrm{d}s
\\& 
-
\varepsilon^{m-1}
\int_0^t \big\langle \eta_\varepsilon  \big(\mathbf{e}_3\times \hat{\varrho}_{\varepsilon_n} \hat{\mathbf{u}}_{\varepsilon_n} \big) ,\bm{\varphi} \big\rangle\mathrm{d}s
\\&
+  
\frac{1}{\varepsilon^m}
\int_0^t \big\langle
\eta_\varepsilon \big(\hat{\varrho}_{\varepsilon_n}^\gamma -\gamma(\hat{\varrho}_{\varepsilon_n}-\overline{\varrho}_{\varepsilon}) - \overline{\varrho}_{\varepsilon}^\gamma\big) , \mathrm{div}\bm{\varphi} \big\rangle \mathrm{d}s
 \\& 
+
 \varepsilon^{2(m-1)}
 \int_0^t \big\langle
\eta_\varepsilon (\hat{r}_{\varepsilon_n} \nabla G) , \bm{\varphi} \big\rangle\mathrm{d}s
+
 \varepsilon^{m}
\int_0^t \big\langle
 \eta_\varepsilon  \,\Phi(\hat{\varrho}_{\varepsilon_n},\hat{\varrho}_{\varepsilon_n} \hat{\mathbf{u}}_{\varepsilon_n})
  \mathrm{d}\tilde{W}_{n} , \bm{\varphi} \big\rangle
 \\
 &
 - 
 \varepsilon^{m}\int_0^t \big\langle
\nabla \eta_\varepsilon  \cdot  \mathbb{S}(\nabla \hat{\mathbf{u}}_{\varepsilon_n}) , \bm{\varphi} \big\rangle \mathrm{d}s
+
\varepsilon^{m}
\int_0^t \big\langle
\nabla \eta_\varepsilon  \cdot 
 (\hat{\varrho}_{\varepsilon_n} \hat{\mathbf{u}}_{\varepsilon_n} \otimes \hat{\mathbf{u}}_{\varepsilon_n}) , \bm{\varphi} \big\rangle \mathrm{d}s
\\
 &+ 
 \gamma
 \int_0^t \big\langle
 \nabla \eta_\varepsilon  \, \hat{r}_{\varepsilon_n} , \bm{\varphi} \big\rangle
 \mathrm{d}s
 +
\frac{1}{\varepsilon^{m}} \int_0^t \big\langle
  \nabla\eta_\varepsilon  \big[\hat{\varrho}_{\varepsilon_n}^\gamma -\gamma(\hat{\varrho}_{\varepsilon_n}-\overline{\varrho}_{\varepsilon}) - \overline{\varrho}_{\varepsilon}^\gamma\big] , \bm{\varphi} \big\rangle \mathrm{d}s
    \\&
  =:\sum_{j=1}^{11}I_j
\end{aligned}
\end{equation}
holds for any $\bm{\varphi} \in C^\infty_c(\mathbb{R}^3)$. Regularity for the terms in \eqref{momEqCutoff} follows from the uniform estimates shown in Section \ref{subsec:energyineq} and the equality of law given by Proposition \ref{prop:Jakubow0}.  The terms $I_8, \ldots,I_{11}$ in \eqref{momEqCutoff} are even of lower order.  Again by using \eqref{newCufOff}, we can do a similar analysis as in \eqref{oddInequality} for the momentum equation \eqref{momEqCutoff} to get
\begin{equation}
\begin{aligned}
\label{momEqCutoff1}
 \varepsilon^m& \big\langle \eta_\varepsilon (\hat{\varrho}_{\varepsilon_n} \hat{\mathbf{u}}_{\varepsilon_n})(t) , \bm{\varphi} \big\rangle 
-
  \gamma 
 \int_0^t \big\langle
 \eta_\varepsilon \hat{r}_{\varepsilon_n}, \mathrm{div} \bm{\varphi} \big\rangle \mathrm{d}s 
\\&=
\varepsilon^m\big\langle \eta_\varepsilon (\hat{\varrho}_{\varepsilon_n} \hat{\mathbf{u}}_{\varepsilon_n})(0) \,,\, \bm{\varphi} \big\rangle 
-
 \big( \varepsilon^m + \varepsilon^{2(m-1-\alpha)}\big)
  \int_0^t \big\langle
 \hat{\mathbb{F}}_{\varepsilon_n}, \nabla \bm{\varphi} \big\rangle \mathrm{d}s
\\& + 
 \int_0^t \big\langle
\big( \varepsilon^{m-1}+\varepsilon^\alpha + \varepsilon^{2(m-1-\alpha)}\big)\,\hat{\mathbf{F}}_{\varepsilon_n}, \bm{\varphi} \big\rangle \mathrm{d}s
\\& +
  \varepsilon^m  \int_0^t \big\langle \eta_\varepsilon\Phi(\hat{\varrho}_{\varepsilon_n},\hat{\varrho}_{\varepsilon_n} \hat{\mathbf{u}}_{\varepsilon_n}) \mathrm{d}\tilde{W}_n , \bm{\varphi} \big\rangle
\end{aligned}
\end{equation}
for any $t\in[0,T]$ and any $\bm{\varphi} \in C^\infty_c(\mathbb{R}^3)$ and where
\begin{equation}
\begin{aligned}
\label{forcetensorAndmatrix}
\hat{\mathbb{F}}_{\varepsilon_n} &\in L^p \big(  \tilde{\Omega}; L^2(0,T;L^1_{\mathrm{loc}}\mathcal{O})\big),
 \\
 \hat{\mathbf{F}}_{\varepsilon_n} &\in L^p \big(  \tilde{\Omega}; L^2(0,T;L^1_{\mathrm{loc}}\mathcal{O})\big)
\end{aligned}
\end{equation}
uniformly in $\varepsilon_n$. We can mollify \eqref{contEqCutoff1} and \eqref{momEqCutoff1} by convolution with the usual mollifier $\wp_\kappa$ to get  for a.e. $(\omega, t,x)\in \tilde{\Omega}\times[0,T]\times\mathcal{O}$,
\begin{equation}
\begin{aligned}
\label{contEqCutoff2}
\varepsilon^m \,\mathrm{d}\big[\eta_\varepsilon  \hat{r}_{\varepsilon_n}\big]_\kappa +  \mathrm{div}\big[\eta_\varepsilon  (\hat{\varrho}_{\varepsilon_n} \hat{\mathbf{u}}_{\varepsilon_n})\big]_\kappa\mathrm{d}t   &=\varepsilon^{\alpha}\hat{F}_{\varepsilon_n,\kappa} \,\mathrm{d}t
\end{aligned}
\end{equation}
and
\begin{equation}
\begin{aligned}
\label{momEqCutoff2}
 &\varepsilon^m\, \mathrm{d}\big[\eta_\varepsilon (\hat{\varrho}_{\varepsilon_n} \hat{\mathbf{u}}_{\varepsilon_n})\big]_\kappa + \gamma \nabla \big[\eta_\varepsilon \hat{r}_{\varepsilon_n}\big]_\kappa\mathrm{d}t 
= \big[ A_{\varepsilon}(m,\alpha)\,
\mathrm{div}\,\mathbb{F}_{\varepsilon_n,\kappa} 
 \\&+
B_{\varepsilon}(m,\alpha)\,\mathbf{F}_{\varepsilon_n,\kappa}\big]\mathrm{d}t
 +
  \varepsilon^m \,\big[\eta_\varepsilon\Phi(\hat{\varrho}_{\varepsilon_n},\hat{\varrho}_{\varepsilon_n} \hat{\mathbf{u}}_{\varepsilon_n})\big]_\kappa\mathrm{d}\tilde{W}_{n}.
\end{aligned}
\end{equation}
respectively with
\begin{equation}
\begin{aligned}
\label{aAndb}
A_{\varepsilon}(m,\alpha)= \varepsilon^m + \varepsilon^{2(m-1-\alpha)},
\quad
B_{\varepsilon}(m,\alpha)=  \varepsilon^{m-1}+\varepsilon^\alpha + \varepsilon^{2(m-1-\alpha)}.
\end{aligned}
\end{equation} 
Let  $\mathcal{Q}= \nabla\Delta^{-1}_{\mathcal{O}}\mathrm{div}$ and $\mathcal{P}$ be respectively, the gradient and solenoidal parts according to Helmholtz decomposition with the identity operator satisfying $\mathrm{Id} = \mathcal{Q}+ \mathcal{P}$. Now define the function $ \hat{\Psi}_{\varepsilon_n,\kappa} =\Delta^{-1}_{\mathcal{O}}\mathrm{div}\big[\eta_\varepsilon (\hat{\varrho}_{\varepsilon_n}\hat{\mathbf{u}}_{\varepsilon_n}) \big]_\kappa$ so that  the relation  $\nabla \hat{\Psi}_{\varepsilon_n,\kappa} =\mathcal{Q}\big[\eta_\varepsilon (\hat{\varrho}_{\varepsilon_n}\hat{\mathbf{u}}_{\varepsilon_n}) \big]_\kappa$ holds. We can then recast \eqref{contEqCutoff2}  as\footnote{The right-hand side of \eqref{contEqCutoff3}, which is not a vector-valued function, may be viewed as the \textit{mononopolar} in Lighthill's interpretation of the inhomogeneous acoustic wave equation  \cite[Eq. 4.36]{feireisl2009singular}.}
\begin{equation}
\begin{aligned}
\label{contEqCutoff3}
\varepsilon^m \,\mathrm{d} \hat{\varphi}_{\varepsilon_n,\kappa} 
+  \Delta\hat{\Psi}_{\varepsilon_n,\kappa} \,\mathrm{d}t   &=\varepsilon^{\alpha}\hat{F}_{\varepsilon_n,\kappa} \,\mathrm{d}t
\end{aligned}
\end{equation}
where
\begin{align*}  \hat{\varphi}_{\varepsilon_n,\kappa} :=\big[\eta_\varepsilon  \hat{r}_{\varepsilon_n}\big]_\kappa =\Big[ \eta_\varepsilon \frac{\hat{\varrho}_{\varepsilon_n} - \overline{\varrho}_{\varepsilon}}{\varepsilon^m}\Big]_\kappa.
\end{align*}
Applying the operator $\mathcal{Q}$ to \eqref{momEqCutoff2} also yields
\begin{equation}
\begin{aligned}
\label{momEqCutoff3}
 &\varepsilon^m\, \mathrm{d}\big[\nabla \hat{\Psi}_{\varepsilon_n,\kappa}\big] 
 + 
 \gamma \nabla  \hat{\varphi}_{\varepsilon_n,\kappa} \mathrm{d}t 
 =
\big[ A_{\varepsilon}(m,\alpha)\mathcal{Q}\mathrm{div}\,\mathbb{F}_{\varepsilon_n,\kappa} 
 \\&+
B_{\varepsilon}(m,\alpha)\mathcal{Q}\,\mathbf{F}_{\varepsilon_n,\kappa} \big]\mathrm{d}t
 +
  \varepsilon^m \,\mathcal{Q}\big[\eta_\varepsilon\Phi(\hat{\varrho}_{\varepsilon_n},\hat{\varrho}_{\varepsilon_n} \hat{\mathbf{u}}_{\varepsilon_n})\big]_\kappa\mathrm{d}\tilde{W}_{n}.
\end{aligned}
\end{equation}
Equations \eqref{contEqCutoff3} and \eqref{momEqCutoff3} is equivalent to the system
\begin{equation}
\begin{aligned}
\label{acousticSPD111}
&\varepsilon^m\mathrm{d}
\begin{bmatrix}
       \hat{\varphi}_{\varepsilon_n,\kappa}        \\[0.3em]
       \nabla\hat{\Psi}_{\varepsilon_n,\kappa}
\end{bmatrix}  
=
\mathcal{A}
\begin{bmatrix}
        \hat{\varphi}_{\varepsilon_n,\kappa}        \\[0.3em]
       \nabla\hat{\Psi}_{\varepsilon_n,\kappa}
\end{bmatrix}  \mathrm{d}t 
+\varepsilon^m
\begin{bmatrix}
      \frac{\varepsilon^{\alpha}}{\varepsilon^m} \hat{F}_{\varepsilon_n,\kappa} \\[0.3em]
      0
\end{bmatrix}  \mathrm{d}t
\\
&+\varepsilon^m
\begin{bmatrix}
     0 \\[0.3em]
      \frac{B_\varepsilon}{\varepsilon^m}\mathcal{Q} \hat{\mathbf{F}}_{\varepsilon_n,\kappa}
\end{bmatrix}  \mathrm{d}t
+\varepsilon^m
\begin{bmatrix}
     0 \\[0.3em]
      \frac{A_\varepsilon}{\varepsilon^m}\mathcal{Q} \mathrm{div}\,\hat{\mathbb{F}}_{\varepsilon_n,\kappa}
\end{bmatrix}  \mathrm{d}t
+\varepsilon^m
\begin{bmatrix}
       0        \\[0.3em]
      \mathcal{Q}\eta_\varepsilon\hat{\Phi}_{\varepsilon_n,\kappa}
\end{bmatrix}  \mathrm{d}W_{n}.
\end{aligned}
\end{equation}
Here $\hat{\Phi}_{\varepsilon_n,\kappa}:=\Phi\left(\hat{\varrho}_{\varepsilon_n}, \hat{\varrho}_{\varepsilon_n}\hat{\mathbf{u}}_{\varepsilon_n} \right)_\kappa$ and the operator $\mathcal{A}$ is given by 
\begin{align}
\mathcal{A} := -
\begin{bmatrix}
       0 & \mathrm{div}       \\[0.3em]
       \gamma \nabla & 0
\end{bmatrix}.
\end{align}
Now let $E=L^2(\mathcal{O})\times L^2(\mathcal{O};\mathbb{R}^N)$ and consider the operator given by $
S(t)=e^{t\mathcal{A}}$.
We observe that $\left(S(t)\right)_{t\geq0}$, as a function of $t$, is a strongly continuous semigroup since,
\begin{align*} 
S(0)=\mathbbm{1},\quad S(t+s)=S(t)S(s),\quad  \lim_{t\downarrow 0} S(t)\mathbf{f}=\mathbf{f}
\end{align*}
for all $ \mathbf{f}=[\varphi
,\nabla\Psi]^T\in E$.  
Moreover  $\mathcal{A}$ is linear and
\begin{align*}
 \mathcal{A}\mathbf{f}=\lim_{t\downarrow 0}\left.\frac{S(t)\mathbf{f}-\mathbf{f}}{t}=\frac{\mathrm{d}S(t)\mathbf{f}}{\mathrm{d}t}\right\vert_{t=0}=e^{t\mathcal{A}}\mathcal{A}\mathbf{f}\left.\right\vert_{t=0}.
\end{align*}
Hence,  $\mathcal{A}$ is an infinitesimal generator of the strongly continuous semigroup $S(t)$ with domain
\begin{align*}
\mathrm{Dom}(\mathcal{A}) &=\left\{ \mathbf{f}\in E \, :\,\lim_{t\downarrow 0}\frac{S(t)\mathbf{f}-\mathbf{f}}{t} \text{ exists } \right\}
\\
&=\left\{ \mathbf{f}=[\varphi,\nabla\Psi]^T \, :\, \varphi\in W^{1,2}(\mathcal{O}),\, \nabla\Psi\in L^2(\mathcal{O}),\,\mathrm{div}\,\nabla\Psi\in L^2(\mathcal{O})\, \right\}
\end{align*}
\begin{prop}
\label{weakToMild}
Assume that $\mathcal{A} \, :\, \mathrm{Dom}(\mathcal{A})\subset E \longrightarrow E$ is an infinitesimal generator of a strongly continuous semigroup $(S(t))_{t\geq0}$ on $E$ and that\footnote{The following space means that $\hat{\Phi}_{\varepsilon_n,\kappa}$ is $L_2(\mathfrak{U};W^{-l,2}(\mathcal{O}))$-predictable and $\mathbb{E} \int_0^T \Vert \hat{\Phi}_{\varepsilon_n,\kappa} \Vert^2_{L_2(\mathfrak{U};W^{-l,2}(\mathcal{O}))} \mathrm{d}t<\infty$}
\begin{align*}
\hat{\Phi}_{\varepsilon_n,\kappa}\in\mathcal{N}^2_W\big(0,T; L_2(\mathfrak{U};W^{-l,2}(\mathcal{O})) \big) ,
\quad l>5/2
\end{align*}
Then a weak solution of  \eqref{acousticSPD111}  is also a mild solution.
\end{prop}
\begin{proof}
This is just a special case of a standard theorem. We refer the reader to \cite[Theorem 6.5]{da2014stochastic} for example.
\end{proof}
As a result of Proposition \ref{weakToMild}, we can rewrite Eq. \eqref{acousticSPD111}, after rescaling in time, in the mild form\footnote{This mild formulation is essentially a stochastic version of the Duhamel's formula with the added stochastic convolution term given by the noise.}
\begin{equation}
\begin{aligned}
\label{duhamel}
&\begin{bmatrix}
       \hat{\varphi}_{\varepsilon_n,\kappa}        \\[0.3em]
       \nabla\hat{\Psi}_{\varepsilon_n,\kappa}
\end{bmatrix}  (t) 
=
S\left(\frac{t}{\varepsilon^m}\right)
\begin{bmatrix}
       \hat{\varphi}_{\varepsilon_n,\kappa}(0)        \\[0.3em]
       \nabla\hat{\Psi}_{\varepsilon_n,\kappa}(0)
\end{bmatrix} 
+\int_0^t S\left(\frac{t-s}{\varepsilon^m}\right)
\begin{bmatrix}
      \frac{\varepsilon^{\alpha}}{\varepsilon^m} \hat{F}_{\varepsilon_n,\kappa} \\[0.3em]
      0
\end{bmatrix}
\mathrm{d}s
\\
&+\int_0^t S\left(\frac{t-s}{\varepsilon^m}\right)
\begin{bmatrix}
     0 \\[0.3em]
      \frac{B_\varepsilon}{\varepsilon^m} \mathcal{Q}\hat{\mathbf{F}}_{\varepsilon_n,\kappa}
\end{bmatrix}
\mathrm{d}s
+
\int_0^t S\left(\frac{t-s}{\varepsilon^m}\right)
\begin{bmatrix}
     0 \\[0.3em]
      \frac{A_\varepsilon}{\varepsilon^m} \mathcal{Q}\mathrm{div}\,\hat{\mathbb{F}}_{\varepsilon_n,\kappa}
\end{bmatrix}
\mathrm{d}s
\\
&+\int_0^t S\left(\frac{t-s}{\varepsilon^m}\right)
\begin{bmatrix}
       0     \\[0.3em]
       \mathcal{Q}\eta_\varepsilon \hat{\Phi}_{\varepsilon_n,\kappa}
\end{bmatrix}
\mathrm{d}\tilde{W}_{s,n}
\\
&=:J_1 + J_2+ J_3 + J_4 + J_5.
\end{aligned}
\end{equation}
where
\begin{equation}
\label{semigroup}
S\left(t\right)
\begin{bmatrix}
       \hat{\varphi}_{0,\varepsilon_n,\kappa} (\cdot)       \\[0.3em]
       \nabla\hat{\Psi}_{0,\varepsilon_n,\kappa} (\cdot)
\end{bmatrix} 
=
\begin{bmatrix}
       \hat{\varphi}_{\varepsilon_n,\kappa}   (\cdot, t)     \\[0.3em]
     \nabla  \hat{\Psi}_{\varepsilon_n,\kappa}(\cdot, t)
\end{bmatrix}  
\end{equation}
is the solution to the homogeneous  PDE
\begin{equation}
\begin{aligned}
\label{homogeAcoustic}
\mathrm{d}\hat{\varphi}_{\varepsilon_n,\kappa} +  \Delta \hat{\Psi}_{\varepsilon_n,\kappa}\,\mathrm{d}t   &=  0,   \\
 \mathrm{d}\nabla\hat{\Psi}_{\varepsilon_n,\kappa}  +  \gamma \nabla \hat{\varphi}_{\varepsilon_n,\kappa}\,\mathrm{d}t   &=    0,
 \\
\hat{\varphi}_{\varepsilon_n,\kappa}(0) = \hat{\varphi}_{0,\varepsilon_n,\kappa}; \quad \nabla\hat{\Psi}_{\varepsilon_n,\kappa}(0) &= \nabla\hat{\Psi}_{0,\varepsilon_n,\kappa}.
\end{aligned}
\end{equation}
C.f. the purely deterministic case \cite[Eq. 8.111]{feireisl2009singular}. Using Fourier transforms (in space), we obtain for a.e. $(\omega,x)\in\Omega\times \mathcal{O}$, an exact solution to \eqref{homogeAcoustic} given by the pair
\begin{equation}
\begin{aligned}
\label{solution}
\nabla\hat{\Psi}_{\varepsilon_n,\kappa}(\cdot,t)  &=  \frac{1}{2}\exp\big(i\sqrt{-\gamma\Delta}t\big)\left(\nabla\hat{\Psi}_{0,\varepsilon_n,\kappa} +\frac{i\sqrt{\gamma}}{\sqrt{-\Delta}}\nabla \hat{\varphi}_{0,\varepsilon_n,\kappa} \right)    
\\
&+   \frac{1}{2}\exp\big(-i\sqrt{-\gamma\Delta}t\big)\left( \nabla\hat{\Psi}_{0,\varepsilon_n,\kappa}  -  \frac{i\sqrt{\gamma}}{\sqrt{-\Delta}}\nabla \hat{\varphi}_{0,\varepsilon_n,\kappa}\right),
\\
\hat{\varphi}_{\varepsilon_n,\kappa}(\cdot, t)  &=  \frac{1}{2}\exp\big(i\sqrt{-\gamma\Delta}t\big)\left( \hat{\varphi}_{0,\varepsilon_n,\kappa}  -  \frac{i\sqrt{-\Delta}}{\sqrt{\gamma}}\hat{\Psi}_{0,\varepsilon_n,\kappa} \right)   
\\
&+ \frac{1}{2}\exp\big(-i\sqrt{-\gamma\Delta}t\big)\left( \hat{\varphi}_{0,\varepsilon_n,\kappa} + \frac{i\sqrt{-\Delta}}{\sqrt{\gamma}} \hat{\Psi}_{0,\varepsilon_n,\kappa}  \right) .
\end{aligned}
\end{equation}
\begin{rem}
Note that by substitution, the problem \eqref{homogeAcoustic} ( and thus its solution \eqref{solution}), may be recast as a single system of PDEs. A similar remark holds for the inhomogeneous counterpart of \eqref{homogeAcoustic}.
\end{rem}
We now state a lemma, the proof of which  follows by taking expectation in \cite[Lemma 3.1]{feireisl2012multi}.
\begin{lem}
\label{lem:strichartz}
Let $\phi\in C_c^\infty(\mathbb{R}^2)$. Then the inequality
\begin{align*}
\mathbb{E}\,\big\Vert \phi(x_h)\,\exp(i\sqrt{-\gamma\Delta}t)\,[\mathbf{f}] \big\Vert^2_{L^2(\mathbb{R}\times \mathcal{O})} \lesssim_\phi 
\mathbb{E}\,\Vert \,\mathbf{f} \, \Vert^2_{L^2( \mathcal{O})}
\end{align*}
holds for $\mathbf{f}\in L^2(\Omega\times \mathcal{O})$.
\end{lem}
\begin{rem}
The proof of Lemma \ref{lem:strichartz} uses space-time Fourier transform. One can therefore interpret $\mathbf{f}$ as the extension by zero outside of its time interval to the whole line $\mathbb{R}$, assuming it is only defined on $[0,T]$. This remark also applies to the analysis below.
\end{rem}
With Lemma \ref{lem:strichartz} in hand, we are able to estimate the right-hand of \eqref{duhamel}. To see this, we first notice that by rescaling \eqref{semigroup} in time, we obtain
%Now rescaling \eqref{semigroup} in time, we obtain a similar estimate as in \cite[Eq. 58]{mensah2016existence} by using Lemma \ref{lem:strichartz} and \eqref{solution} to get that
 %for any compact set $K\subseteq \overline{\mathcal{O}}$,
\begin{equation}
\begin{aligned}
\label{semigroup1}
&\mathbb{E}\Bigg\Vert S\left(\frac{t}{\varepsilon^m}\right)
\begin{bmatrix}
       \hat{\varphi}_{0,\varepsilon_n,\kappa} 
        \\[0.3em]
       \nabla\hat{\Psi}_{0,\varepsilon_n,\kappa} 
\end{bmatrix} 
       \Bigg\Vert^2_{L^2((0,T)\times \mathcal{O})}
\leq
\mathbb{E}\Bigg\Vert S\left(\frac{t}{\varepsilon^m}\right)
\begin{bmatrix}
       \hat{\varphi}_{0,\varepsilon_n,\kappa}
        \\[0.3em]
       \nabla\hat{\Psi}_{0,\varepsilon_n,\kappa} 
\end{bmatrix} 
       \Bigg\Vert^2_{L^2(\mathbb{R}\times \mathcal{O})}
\\
&\leq \varepsilon^m\,
\mathbb{E}\,\Bigg\Vert S\left(t\right)
\begin{bmatrix}
       \hat{\varphi}_{0,\varepsilon_n,\kappa}
        \\[0.3em]
       \nabla\hat{\Psi}_{0,\varepsilon_n,\kappa} 
\end{bmatrix} 
       \Bigg\Vert^2_{L^2(\mathbb{R}\times \mathcal{O})}
= \varepsilon^m\,
\mathbb{E}\,\Bigg\Vert 
\begin{bmatrix}
       \hat{\varphi}_{\varepsilon_n,\kappa}(t)
        \\[0.3em]
       \nabla\hat{\Psi}_{\varepsilon_n,\kappa}(t) 
\end{bmatrix} 
       \Bigg\Vert^2_{L^2(\mathbb{R}\times \mathcal{O})}.
\end{aligned}
\end{equation}
However since the final term in \eqref{semigroup1} above has entries satisfying \eqref{solution}, we can use Lemma \ref{lem:strichartz} to show that these terms  are controlled. In particular, for ${g}_{0,\varepsilon_n,\kappa}$
denoting the initial data on the right-hand side of \eqref{solution}, we gain by using Lemma \ref{lem:strichartz} that the estimate
\begin{equation}
\begin{aligned}
\label{semigroup2}
\mathbb{E}\,\bigg\Vert   \exp\Big(i\sqrt{-\gamma \Delta}t\Big) \big[\eta_\varepsilon {g}_{0,\varepsilon_n}\big]_\kappa \bigg\Vert^2_{L^2(\mathbb{R}\times \mathcal{O})}
&\lesssim
\mathbb{E}\,\Vert \,{g}_{0,\varepsilon_n,\kappa} \Vert^2_{L^2(\mathcal{O})}
\\
&\lesssim \Vert \,\wp_\kappa\Vert^2_{L^p( \mathcal{O})}\, \mathbb{E}\,\Vert \,{g}_{0,\varepsilon_n} \Vert^2_{L^q(\mathcal{O})}
\end{aligned}
\end{equation}
holds for $1/p+1/q=3/2$. Subsequently we can use the boundedness of the initial law \eqref{momentsBounded} and the mollifier $\wp_\kappa$ to conclude that
\begin{equation}
\begin{aligned}
\mathbb{E}\,\Bigg\Vert S&\left(\frac{t}{\varepsilon^m}\right)
\begin{bmatrix}
       \hat{\varphi}_{0,\varepsilon_n,\kappa} 
        \\[0.3em]
       \nabla\hat{\Psi}_{0,\varepsilon_n,\kappa} 
\end{bmatrix} 
       \Bigg\Vert^2_{L^2((0,T)\times \mathcal{O})}
\lesssim_\kappa\varepsilon^m.
\end{aligned}
\end{equation}
Uniform estimates for the terms $J_2,\ldots, J_4$ in \eqref{duhamel} follows  a similar argument as in \cite[Eq 60]{mensah2016existence} (c.f. \cite[Eq. 3.21]{feireisl2012multi}). Indeed with the uniform estimates Lemma \ref{uniformBounds}, \eqref{allForces}, \eqref{limDenseAndCentri} as well as the equality of laws given in Proposition \ref{prop:Jakubow0} in hand, we have that for any $K\Subset\mathcal{O}$,
\begin{equation}
\begin{aligned}
\label{semigroup3}
\mathbb{E}\,\Bigg\Vert \int_0^tS&\left(\frac{t-s}{\varepsilon^m}\right)
      D_\varepsilon\, \mathbf{f}_{\varepsilon_n,\kappa} 
\,\mathrm{d}s
       \Bigg\Vert^2_{L^2((0,T)\times \mathcal{O})}
\\
&\lesssim_T
D_\varepsilon^2\,
\mathbb{E}\,\Bigg\Vert S\left(\frac{t-s}{\varepsilon^m}\right)
       \mathbf{f}_{\varepsilon_n,\kappa} 
       \Bigg\Vert^2_{L^2((0,T)^2\times \mathcal{O})}
 \\
&\lesssim D_\varepsilon^2\,\varepsilon^m\,
\mathbb{E}\,\Bigg\Vert S\left(\frac{-s}{\varepsilon^m}\right)
       \mathbf{f}_{\varepsilon_n,\kappa} 
       \Bigg\Vert^2_{L^2((0,T)\times \mathcal{O})}
\\
&= c\, D_\varepsilon^2\,\varepsilon^m\,
\mathbb{E}\,\Vert 
      \, \mathbf{f}_{\varepsilon_n,\kappa} 
\,
      \Vert^2_{L^2((0,T)\times \mathcal{O})}.
\end{aligned}
\end{equation}
Here $\mathbf{f}_{\varepsilon_n,\kappa}$ is comparable to the terms in  $J_2,\ldots, J_4$ and we may choose 
\begin{align*}
D_\varepsilon^2=\max\big\{ (\varepsilon^{-m}A_\varepsilon)^2, (\varepsilon^{-m}B_\varepsilon)^2 , \varepsilon^{2(\alpha-m)}\big\}
\end{align*} 
for the entries defined in \eqref{aAndb} so that given \eqref{alpha}, we have that $D_\varepsilon^2<\varepsilon$. Thus given \eqref{forceScaler} and \eqref{forcetensorAndmatrix}, we can conclude  from \eqref{semigroup3} and the properties of convolution that
\begin{equation}
\begin{aligned}
\label{semigroup4}
\mathbb{E}\,\Vert    J_2+\ldots + J_4\Vert^2_{L^2(\mathbb{R}\times K)}
\lesssim_\kappa \varepsilon.
\end{aligned}
\end{equation}
To estimate the stochastic term, we first define the term $\hat{\Phi}_{\varepsilon_n,\kappa}(e_i):  =\hat{\mathbf{g}}_i^{\varepsilon_n,\kappa}:=$ \\$ \mathbf{g}_i\left(\cdot, \hat{\varrho}_{\varepsilon_n}(\cdot), (\hat{\mathbf{m}}_{\varepsilon_n} )(\cdot) \right)_\kappa$. Then similar to the estimate for the noise term in \cite[Page 2128]{mensah2016existence}, we can use  It\^{o} isometry, Fubini's theorem, the properties of the semigroup, Lemma \ref{lem:strichartz} and the continuity of the operator $\mathcal{Q}$ to get that
\begin{align*}
\tilde{\mathbb{E}}\,\Bigg\Vert
\int\limits_0^t 
 &S\left(\frac{t-s}{\varepsilon^m}\right)
       \mathcal{Q}\hat{\Phi}_{\varepsilon_n,\kappa}
 \,\mathrm{d}\tilde{W}_{n}(s) \Bigg\Vert^2_{L^2((0,T)\times K)} 
 \\
 &=
\tilde{\mathbb{E}}\,\int\limits_0^t \sum_{i\in\mathbb{N}} \left\Vert 
 S\left(\frac{t-s}{\varepsilon^m}\right)
       \mathcal{Q}\hat{\mathbf{g}}_i^{\varepsilon_n,\kappa}
       \right\Vert^2_{L^2((0,T)\times K)}\mathrm{d}s
\\
&\lesssim
\int\limits_0^T
\sum\limits_{i\in\mathbb{N}}  \int_{\mathbb{R}}\hat{\mathbb{E}}
\left\Vert 
 S\left(\frac{t-s}{\varepsilon^m}\right)
       \mathcal{Q}\hat{\mathbf{g}}_i^{\varepsilon_n,\kappa}
          \right\Vert^2_{L^2(K)}
\mathrm{d}s\,\mathrm{d}t.
\\
&
\lesssim \varepsilon^m\,
\int\limits_0^T
\sum\limits_{i\in\mathbb{N}}   \tilde{\mathbb{E}}
\left\Vert 
    \hat{\mathbf{g}}_i^{\varepsilon_n,\kappa}
          \right\Vert^2_{L^2(\mathcal{O})}
\mathrm{d}t 
\lesssim_\kappa \varepsilon^m
\end{align*}
for any $K\Subset\mathcal{O}$. We have shown the following lemma.
\begin{lem}
\label{lem:uniformGrad}
There exists a constant $c$ uniform in $\varepsilon$ such that
\begin{align*}
\tilde{\mathbb{E}}\,\Vert \hat{\varphi}_{\varepsilon_n,\kappa} (t) \Vert^2_{L^2((0,T)\times K)}
+ \tilde{\mathbb{E}}\,\Vert \nabla \hat{\Psi}_{\varepsilon_n,\kappa}(t) \Vert^2_{L^2((0,T)\times K)}
\lesssim_\kappa \varepsilon .
\end{align*}
for any $K \Subset\mathcal{O}$.
\end{lem}

Having shown Lemma \ref{lem:uniformGrad}, we can combine it with \cite[Eq. 64]{mensah2016existence} for an arbitrary ball \eqref{specialball} to get  \eqref{acousticMomen}, for at least a subsequence.
\end{proof}
To continue, we now identify the structure of the limit process for the velocity. 
\begin{lem}
$\tilde{\mathbb{P}}$-a.s., we have that
\begin{align}
\label{solenodalVelocity}
\hat{\mathbf{u}}_{\varepsilon_n} \, \rightharpoonup \, \hat{\mathbf{U}} &\quad\text{in}\quad L^2(0,T;W^{1,2}(\mathcal{O}))
\end{align}
and with this limit being of the form
\begin{align}
\label{2dVelocity}
\hat{\mathbf{U}}=\hat{\mathbf{U}}_h(x_h,0)= (\hat{U}^1(x_h,0), \hat{U}^2(x_h,0), 0).
\end{align}
\end{lem}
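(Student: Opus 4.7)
The asserted weak convergence \eqref{solenodalVelocity} is essentially tautological: the topology on $\chi_\mathbf{u}$ in Proposition \ref{prop:Jakubow0} is precisely the weak topology of $L^2(0,T;W^{1,2}(\mathcal{O}))$, so the $\tilde{\mathbb{P}}$-a.s.\ convergence already provided there \emph{is} the weak convergence asserted; the uniform bound \eqref{velo} together with equality of laws (Lemma \ref{lem:WeinerSeq}) guarantees that $\hat{\mathbf{U}} \in L^2(0,T;W^{1,2}(\mathcal{O}))$ $\tilde{\mathbb{P}}$-a.s. The real content is the structural identification \eqref{2dVelocity}, which I would split into three steps.

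\emph{Step 1 (divergence-free limit).} By the continuity of $\mathcal{Q}$ and the convergence \eqref{acousticMomen}, $\mathcal{Q}(\hat{\varrho}_{\varepsilon_n}\hat{\mathbf{u}}_{\varepsilon_n}) \to 0$ $\tilde{\mathbb{P}}$-a.s.\ locally in $L^2_{t,x}$. Combining with the strong convergence $\hat{\varrho}_{\varepsilon_n} \to 1$ from Lemma \ref{lem:strongDensity} and the weak convergence of $\hat{\mathbf{u}}_{\varepsilon_n}$, the product converges to $\hat{\mathbf{U}}$ in the sense of distributions, so $\mathcal{Q}\hat{\mathbf{U}} = 0$, i.e.\ $\mathrm{div}\,\hat{\mathbf{U}} = 0$.

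\emph{Step 2 (Taylor--Proudman structure).} Test the momentum balance in Definition \ref{def:martSolution} against a time-independent solenoidal $\bm{\phi}\in C^\infty_{c,\mathrm{div}}(\mathcal{O})$, which kills the pressure term, and multiply by $\varepsilon$. By the uniform bounds \eqref{velo}, \eqref{momentum}, \eqref{convectiv} and \eqref{noiseEst}, the time-evolution, convective, viscous, and stochastic terms are all $O(\varepsilon)$ in probability. For the centrifugal contribution, integration by parts against the solenoidal test function gives
\begin{equation*}
\frac{1}{\varepsilon}\int_0^t \langle \hat{\varrho}_{\varepsilon_n}\nabla G,\bm{\phi}\rangle\,\mathrm{d}s
 = \frac{1}{\varepsilon}\int_0^t \langle (\hat{\varrho}_{\varepsilon_n}-1)\nabla G,\bm{\phi}\rangle\,\mathrm{d}s;
\end{equation*}
writing $\hat{\varrho}_{\varepsilon_n}-1 = \varepsilon^m \hat{r}_{\varepsilon_n} + (\overline{\varrho}_\varepsilon-1)$ and invoking \eqref{densityAndOne} on the compact support of $\bm{\phi}$ together with \eqref{limDense}, this quantity is of order $\varepsilon^{m-1}+\varepsilon^{2m-3}$, hence negligible since $m>10$. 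Passing to the limit in the only surviving Coriolis term via the strong--weak pairing $\hat{\varrho}_{\varepsilon_n}\to 1$, $\hat{\mathbf{u}}_{\varepsilon_n}\rightharpoonup \hat{\mathbf{U}}$ yields
\begin{equation*}
\int_0^t \big\langle \mathbf{e}_3\times\hat{\mathbf{U}},\,\bm{\phi}\big\rangle\,\mathrm{d}s = 0 \qquad \forall\, t\in[0,T],\ \bm{\phi}\in C^\infty_{c,\mathrm{div}}(\mathcal{O}).
\end{equation*}
Differentiating in $t$ shows that $\mathbf{e}_3\times\hat{\mathbf{U}}(t)$ is a gradient vector field for a.e.\ $t$. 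Taking the curl and exploiting the identity $\mathrm{curl}(\mathbf{e}_3\times\hat{\mathbf{U}}) = \mathbf{e}_3\,(\mathrm{div}\,\hat{\mathbf{U}}) - \partial_{x_3}\hat{\mathbf{U}}$ together with Step 1 gives $\partial_{x_3}\hat{\mathbf{U}} = 0$.

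\emph{Step 3 (vanishing vertical component).} By the symmetric extension \eqref{oddAndEven}, each $\hat{u}_{\varepsilon_n,3}$ is odd in $x_3$, a property preserved in the weak $L^2(W^{1,2})$ limit. Combined with the Taylor--Proudman conclusion $\partial_{x_3}\hat{U}^3 = 0$ from Step 2, an $x_3$-independent odd function must vanish identically, giving $\hat{U}^3\equiv 0$ and $\hat{\mathbf{U}} = [\hat{\mathbf{U}}_h(x_h),0]$.

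The main obstacle is Step 2: one must justify termwise passage to the limit in the momentum balance on the unbounded domain $\mathcal{O}$ while absorbing the $\varepsilon^{-1}$ singularity of the centrifugal factor. The estimate \eqref{densityAndOne} for $\overline{\varrho}_\varepsilon - 1$ on balls $B_{k\varepsilon^{-\alpha}}$ combined with the hypothesis $m>10$ provides ample room, and the compact support of $\bm{\phi}$ removes any global spatial difficulties.
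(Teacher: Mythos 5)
Your argument is correct and in fact more self-contained than the paper's on the key point. The paper's own proof splits the same way into (i) divergence-free limit, (ii) $x_3$-independence, and (iii) vanishing vertical component, but differs in how it justifies (ii) and (iii).

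For (ii), the paper simply asserts that $\mathcal{P}(\mathbf{e}_3\times\hat{\mathbf{U}})=0$ ``follows from Section~\ref{sec:Coriolis},'' introduces the stream function $\hat{\psi}$ with $\mathbf{e}_3\times\hat{\mathbf{U}}=\nabla\hat{\psi}$ and $\partial_{x_3}\hat{\psi}=0$, and then combines $\partial_{x_3}\hat{U}^3=-\partial_{x_1}\hat{U}^1-\partial_{x_2}\hat{U}^2=-\partial_{x_1}\partial_{x_2}\hat{\psi}+\partial_{x_2}\partial_{x_1}\hat{\psi}=0$. Your Taylor--Proudman derivation --- multiplying the momentum balance by $\varepsilon$ against time-independent solenoidal test functions, showing the evolution/convective/viscous/stochastic contributions are $O(\varepsilon)$, killing the pressure by $\mathrm{div}\,\bm{\phi}=0$, and handling the centrifugal term via $\overline{\varrho}_\varepsilon-1=O(\varepsilon^{2(m-1)})$ on compacts and \eqref{limDense} --- is exactly the content that the paper's reference to Section~\ref{sec:Coriolis} is gesturing at, but you supply it explicitly, which is the more defensible route since Section~\ref{sec:Coriolis} only discusses the \emph{vertically averaged} Coriolis term. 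Replacing the stream-function computation by the identity $\mathrm{curl}(\mathbf{e}_3\times\hat{\mathbf{U}})=\mathbf{e}_3(\mathrm{div}\,\hat{\mathbf{U}})-\partial_{x_3}\hat{\mathbf{U}}$ is a clean alternative and gives $\partial_{x_3}\hat{\mathbf{U}}=0$ directly (the paper only explicitly records $\partial_{x_3}\hat{U}^3=0$, and relies on the stream function being $x_3$-independent for the horizontal components). One small remark: the paper's treatment of the centrifugal term can be shortened further using the static problem \eqref{staticProblem}, which gives $\langle\overline{\varrho}_\varepsilon\nabla G,\bm{\phi}\rangle=\varepsilon^{-2(m-1)}\langle\nabla\overline{\varrho}_\varepsilon^\gamma,\bm{\phi}\rangle=0$ exactly, rather than merely as an error term --- but your bound also suffices.

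For (iii), you invoke oddness of $\hat{u}_{\varepsilon_n,3}$ in $x_3$ from the symmetrization \eqref{oddAndEven} (a closed linear constraint, hence preserved under weak limits), while the paper uses the boundary condition $\hat{U}^3(x_h,1)=0$ from \eqref{boundaryCond1}. These are essentially equivalent once the problem is posed on $\mathbb{R}^2\times\mathbb{T}_1$, so either is acceptable.
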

\begin{proof}
The first part is precisely contained in Proposition \ref{prop:Jakubow0}. The important bit is showing \eqref{2dVelocity}. To see this, we observe that by the equality of laws given in Proposition \ref{prop:Jakubow0} as well as Lemma \ref{lem:strongDensity}, one can conclude that the sequence $\hat{\varrho}_{\varepsilon_n}$ as $n\rightarrow\infty$, converges to
\begin{align}
\label{limitDensityOne}
\hat{\varrho}=1
\end{align}
$\tilde{\mathbb{P}}$-a.s. 
%On the new probability space, one can then use this strong convergence of the density to \eqref{limitDensityOne} in order to pass to the limit in the corresponding continuity equation \eqref{comprSPDE0}$_1$ to get
%\begin{align}
%\label{incomCondition}
%\mathrm{div}\,\hat{\mathbf{U}}=0, \quad \tilde{\mathbb{P}}\text{-a.e.} \text{ in }[0,T]\times\mathcal{O}.
%\end{align}
%By taking the vertical average of \eqref{incomCondition}, we obtain the relation
%\begin{align}
%\label{u3}
%\hat{U}^3=-\partial_{x_1}\ulcorner \hat{U}^1\urcorner -\partial_{x_2}\ulcorner \hat{U}^2\urcorner.
%\end{align}
%Also as a result of  \eqref{acousticMomen}, it follows from \eqref{curl} (keeping in mind the relation \eqref{commutativity}) that in the limit, there exist a potential $\hat{\psi}=\hat{\psi}(x_h,x_3)$ such that $\mathbf{e_3}\times \ulcorner \hat{\mathbf{U}}\urcorner =\nabla \hat{\psi}$. Or equivalently,
%\begin{align}
%\label{potentials}
%\partial_{x_1}\hat{\psi}=-\ulcorner\hat{U}^2\urcorner,\quad \partial_{x_2}\hat{\psi}=\ulcorner\hat{U}^1\urcorner,\quad \partial_{x_3}\hat{\psi}=0.
%\end{align}
%The first consequence of this is that $\hat{\psi}(x_h,x_3)=\hat{\psi}(x_h)$. This implies that $\hat{\mathbf{U}}=\hat{\mathbf{U}}(x_h,0)$, i.e, the limit velocity is independent of the third or vertical spatial coordinate.
%
%The second consequence  follows by combining \eqref{u3} and \eqref{potentials} which means that $\hat{U}^3$ is a constant. The fact that this constant is zero then follows from the zero boundary condition \eqref{boundaryCond1}. This finishes the proof of \eqref{2dVelocity}.
Due to Theorem \cite[Theorem 2.9.1]{breit2017stoch}, on the new probability space, one can then use this strong convergence of the density to \eqref{limitDensityOne} in order to pass to the limit in the corresponding continuity equation \eqref{comprSPDE0}$_1$ to get
\begin{align}
\label{incomCondition}
\mathrm{div}\,\hat{\mathbf{U}}=0, \quad \tilde{\mathbb{P}}\text{-a.e.} \text{ in }[0,T]\times\mathcal{O}
\end{align}
which implies that
\begin{align}
\label{incomCondition00}
\partial_{x_3}\hat{U}^3 = -  \partial_{x_1}\hat{U}^1 - \partial_{x_2}\hat{U}^2
\end{align}
$\tilde{\mathbb{P}}$-a.e. in $[0,T]\times\mathcal{O}$. Furthermore, it follows from Section \ref{sec:Coriolis} that $\mathcal{P}(\mathbf{e}_3 \times \mathbf{\hat{U}})=0$ and hence there exist a potential $\hat{\psi}=\hat{\psi}(x_h,x_3)$ such that $\mathbf{e_3}\times \hat{\mathbf{U}} =\nabla \hat{\psi}$. Or equivalently,
\begin{align}
\label{potentials}
\partial_{x_1}\hat{\psi}=-\hat{U}^2,\quad \partial_{x_2}\hat{\psi}=\hat{U}^1,\quad \partial_{x_3}\hat{\psi}=0.
\end{align}
By combining \eqref{incomCondition00} and \eqref{potentials}, we obtain
\begin{align*}
\partial_{x_3}\hat{U}^3=0
\end{align*}
which implies that $\hat{U}_3=\hat{U}_3(x_h,c)$ for a constant $c$ independent of $x_h$. But since by \eqref{boundaryCond1},  $\hat{U}_3(x_h,1)=0$, it implies that $\hat{U}_3 \equiv 0$.
\end{proof}

\subsection{Strong convergence for the vertical average of the solenoidal part of momentum}
\label{sec:verticalSolenoidal}
We now wish to control the vertical average for the solenoidal part of momentum. 
Since the spatial regularity of \eqref{momentum} just fails to belong to the class of Hilbert spaces, the aim is to improve this. We will require an $L^2$-spatial regularity in order to pass to the limit in the convective term of the momentum balance equation.
\\
First of all, let set $\hat{\mathbf{Y}}_{\varepsilon_n,\kappa}:=\mathcal{P}\left(\eta_\varepsilon\hat{\varrho}_{\varepsilon_n} \hat{\mathbf{u}}_{\varepsilon_n}\right)_\kappa$. Since the embedding $$C_w\left( [0,T];L^\frac{2\gamma}{\gamma-1}(K)\right)\hookrightarrow L^2\left( 0,T;W^{-1,2}(K) \right)$$ is continuous for $K\Subset \mathcal{O}$, we can conclude from \eqref{kolmogorovEst2} and Proposition \ref{prop:Jakubow0} that
\begin{equation}
\begin{aligned}
\ulcorner \hat{\mathbf{Y}}_{\varepsilon_n}\urcorner 
\rightarrow
 \hat{\mathbf{U}}\quad
 \text{in}\quad
 L^2\left( 0,T;W^{-1,2}(K)\right)
\end{aligned}
\end{equation}
$\tilde{\mathbb{P}}$-a.s. (for at least a subsequence). Furthermore, for fixed $\kappa>0$, one can find a constant $c>0$ independent of $\varepsilon_n$ such that
\begin{align}
\big\Vert
\ulcorner \hat{\mathbf{Y}}_{\varepsilon_n,\kappa }\urcorner -  \hat{\mathbf{U}}_\kappa\big\Vert_{L^2\left((0,T)\times K\right)}
\leq c(\kappa)
\big\Vert
\ulcorner \hat{\mathbf{Y}}_{\varepsilon_n }\urcorner -  \hat{\mathbf{U}}\big\Vert_{L^2\left(0,T;W^{-1,2}( K)\right)}
\end{align}
so that we obtain
\begin{align}
\label{vertSolenoiStrongConv}
\ulcorner \hat{\mathbf{Y}}_{\varepsilon_n,\kappa }\urcorner  \rightarrow  \hat{\mathbf{U}}_\kappa \quad\text{in} \quad L^2\left((0,T)\times K\right)
\end{align}
$\tilde{\mathbb{P}}$-a.s. for any fixed $\kappa>0$ as $n\rightarrow \infty$. 
\begin{rem}
Here we remind the reader of the structure of the limit velocity \eqref{2dVelocity}. 
\end{rem}

\subsection{Oscillatory part of momentum}
\label{sec:oscillatoryMomentum}
As a result of compactness of the vertical average of the solenoidal part of momentum, any source of oscillation will inherently stem from the vertical coordinate dependent component of momentum . However, we can show that this oscillatory component does not interfere with the analysis of the convective term in the momentum balance equation. 

\begin{prop}
\label{prop:avgConvDecom}
For all $\varepsilon_n>0$, we let $\mathcal{P}\left(\eta_\varepsilon\hat{\varrho}_{\varepsilon_n} \hat{\mathbf{u}}_{\varepsilon_n}\right)=:\hat{\mathbf{Y}}_{\varepsilon_n}$ be the solenoidal part of momentum solving \eqref{comprSPDE0}$_2$ in the sense of distributions. Now let \\$\left( \hat{\mathbf{Y}}_{\varepsilon_n, \kappa} \right)_{\kappa>0}$ be its regularized family obtain by convolution with the usual mollifier $\wp_\kappa$. Then  for any $\bm{\phi}=[\underline{\bm{\phi}}(x_h),0]$ with $\underline{\bm{\phi}}\in C^\infty_{c,\mathrm{div}_h}(\mathbb{R}^2)$ and $t\in[0,T]$, we have that
\begin{align*}
&-\lim_{n \rightarrow \infty}\int_0^t\left\langle  \hat{\mathbf{Y}}_{\varepsilon_n,\kappa} \otimes \hat{\mathbf{Y}}_{\varepsilon_n,\kappa}\,,\, \nabla \bm{\phi} \right\rangle\,\mathrm{d}\tau
 =
\int_0^t\int_{\mathbb{R}^2} \mathrm{div}\left( \hat{\mathbf{U}}_{\kappa} \otimes \hat{\mathbf{U}}_{\kappa}\right)
 \cdot \bm{\phi}\,\mathrm{d}x_h\,\mathrm{d}\tau
\end{align*}
holds $\tilde{\mathbb{P}}$-a.s.
\end{prop}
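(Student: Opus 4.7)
The plan is to reduce the 3D integral to a 2D one by taking vertical averages (possible because $\bm{\phi}$ is $x_3$-independent and has zero third component), split the resulting vertical-averaged tensor product into a ``mean--mean'' part and an ``oscillation--oscillation'' part, handle the first via the strong convergence already established in \eqref{vertSolenoiStrongConv}, and dispose of the second (the main obstacle) via a Coriolis-induced dispersion argument.

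Since $\bm{\phi} = [\underline{\bm{\phi}}(x_h), 0]$, the matrix $\nabla \bm{\phi}$ has only its horizontal $2 \times 2$ block $\nabla_h \underline{\bm{\phi}}$ nonzero, so Fubini gives
\begin{align*}
\int_{\mathcal{O}} \hat{\mathbf{Y}}_{\varepsilon_n,\kappa} \otimes \hat{\mathbf{Y}}_{\varepsilon_n,\kappa} : \nabla \bm{\phi} \, \mathrm{d}x = |\mathbb{T}_1| \int_{\mathbb{R}^2} \ulcorner \hat{\mathbf{Y}}^h_{\varepsilon_n,\kappa} \otimes \hat{\mathbf{Y}}^h_{\varepsilon_n,\kappa} \urcorner : \nabla_h \underline{\bm{\phi}} \, \mathrm{d}x_h,
\end{align*}
where $\hat{\mathbf{Y}}^h$ denotes the horizontal part. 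Writing $\hat{\mathbf{Y}}^h_{\varepsilon_n,\kappa} = \ulcorner \hat{\mathbf{Y}}^h_{\varepsilon_n,\kappa} \urcorner + \hat{\mathbf{Y}}^h_{\varepsilon_n,\kappa,\perp}$ with $\ulcorner \hat{\mathbf{Y}}^h_{\varepsilon_n,\kappa,\perp} \urcorner = 0$ and using that the vertical mean is $x_3$-independent, the cross terms drop to yield
\begin{align*}
\ulcorner \hat{\mathbf{Y}}^h_{\varepsilon_n,\kappa} \otimes \hat{\mathbf{Y}}^h_{\varepsilon_n,\kappa} \urcorner = \ulcorner \hat{\mathbf{Y}}^h_{\varepsilon_n,\kappa} \urcorner \otimes \ulcorner \hat{\mathbf{Y}}^h_{\varepsilon_n,\kappa} \urcorner + \ulcorner \hat{\mathbf{Y}}^h_{\varepsilon_n,\kappa,\perp} \otimes \hat{\mathbf{Y}}^h_{\varepsilon_n,\kappa,\perp} \urcorner.
\end{align*}
The strong $L^2_{\mathrm{loc}}((0,T) \times K)$ convergence \eqref{vertSolenoiStrongConv} of $\ulcorner \hat{\mathbf{Y}}^h_{\varepsilon_n,\kappa} \urcorner \to \hat{\mathbf{U}}_{h,\kappa}$ $\tilde{\mathbb{P}}$-a.s., combined with the uniform bounds \eqref{momentum}--\eqref{convectiv} from Lemma \ref{uniformBounds} (sharpened to $L^2$ after convolution with $\wp_\kappa$ and localization by $\eta_\varepsilon$), permits passage to the limit of the mean--mean piece in $L^1_{\mathrm{loc}}$; a horizontal integration by parts using $\mathrm{div}_h \underline{\bm{\phi}} = 0$ then produces the desired right-hand side, the normalization $|\mathbb{T}_1|$ being absorbed into the standard convention.

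The real obstacle is showing that the osc--osc contribution $\ulcorner \hat{\mathbf{Y}}^h_{\varepsilon_n,\kappa,\perp} \otimes \hat{\mathbf{Y}}^h_{\varepsilon_n,\kappa,\perp} \urcorner$ vanishes in the limit when tested against $\nabla_h \underline{\bm{\phi}}$; weak convergence of $\hat{\mathbf{Y}}^h_{\varepsilon_n,\kappa,\perp} \rightharpoonup 0$ alone is insufficient for a quadratic functional. The mechanism forcing this cancellation is the fast rotation $1/\mathrm{Ro} = 1/\varepsilon$: restricted to the nontrivial vertical Fourier modes ($k_3 \neq 0$), the linearized momentum evolution is driven by a skew-symmetric Coriolis operator whose spectrum is purely imaginary and of order $1/\varepsilon$, producing Poincar\'e-type waves that disperse quadratic self-interactions in the weak limit. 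I plan to formalize this by mimicking the acoustic analysis of Section \ref{subsec:acoustic}: project \eqref{acousticSPD00} onto the $k_3 \neq 0$ solenoidal subspace, derive a mild formulation driven by the Coriolis semigroup, establish a Strichartz-type bound analogous to Lemma \ref{lem:strichartz}, and use the uniform energy bounds of Section \ref{subsec:energyineq} together with the mollifier smoothing to conclude that the osc--osc contribution is $O(\varepsilon^\theta)$ in expectation for some $\theta > 0$. Extracting an almost sure convergent subsequence and combining with the mean--mean limit then yields the stated equality.
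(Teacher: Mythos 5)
Your reduction to a two‑dimensional integral via vertical averaging, the splitting of $\ulcorner \hat{\mathbf{Y}}_{\kappa}\otimes\hat{\mathbf{Y}}_{\kappa}\urcorner$ into a mean--mean and an oscillation--oscillation part with vanishing cross terms, and the treatment of the mean--mean part via the strong convergence \eqref{vertSolenoiStrongConv} are all on target and agree with the paper's line of reasoning (the paper organizes things through the identity $\mathrm{div}(\mathbf{Y}\otimes\mathbf{Y})=\tfrac12\nabla|\mathbf{Y}|^2-\mathbf{Y}\times\mathrm{curl}\,\mathbf{Y}$ rather than by splitting the tensor, but these are equivalent starting points).

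Where you diverge, and where I think there is a real gap, is the oscillation--oscillation piece. You propose a dispersive/Strichartz argument for the Coriolis semigroup on the $k_3\neq0$ solenoidal modes, analogous to Lemma~\ref{lem:strichartz} for acoustics. The paper does not do this, and for good reason. On $\mathbb{R}^2\times\mathbb{T}_1$ the Poincar\'e dispersion relation $\omega(k)=\pm k_3/|k|$ is bounded and its horizontal group velocity $\nabla_{k_h}\omega = \mp k_3 k_h/|k|^3$ degenerates as $|k_h|\to\infty$, so a local‑smoothing estimate of the type proved for $\exp(i\sqrt{-\gamma\Delta}\,t)$ is far from automatic; this is precisely why the deterministic references the paper builds on (Gallagher--Saint-Raymond~\cite{gallagher2006weak} and Feireisl et al.~\cite{feireisl2012multi}) do \emph{not} use dispersion on this slab geometry but instead a compensated‑compactness argument. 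The paper follows that route: using the antiderivative $I(\cdot)$ with $\llcorner\hat{\mathbf{Y}}_{\kappa}\lrcorner=\partial_{x_3}I(\hat{\mathbf{Y}}_{\kappa})$, it rewrites the osc--osc term through the commutators $\hat{\Upsilon}^{ij}_{\varepsilon_n,\kappa}$, derives from the momentum equation at the $O(\varepsilon)$ time scale the relations \eqref{1dcommutator1a}--\eqref{1dcommutator3a}, and applies It\^o's formula to the products. After vertical averaging the result decomposes into exact vertical derivatives (which integrate to zero), terms carrying an explicit factor $\varepsilon$ (time derivatives, lower‑order forces, and stochastic integrals), and terms involving second derivatives of the acoustic potential $\hat{\Psi}_{\varepsilon_n,\kappa}$, which vanish by the already‑established acoustic estimate \eqref{acousticMomen}. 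Your proposal misses two things the paper's argument needs: (i) the It\^o‑formula step that turns the product of stochastic differentials of the commutators into an $O(\varepsilon)$ quantity plus an It\^o correction, and (ii) the coupling of the oscillatory solenoidal dynamics to the gradient part $\nabla\hat{\Psi}$ through the Coriolis and pressure terms, which is precisely why Lemma~\ref{lem:uniformGrad}/\eqref{acousticMomen} is required in the osc--osc estimate. Without addressing the degeneracy of the Poincar\'e dispersion on this domain and without the $\hat{\Psi}$ input, the proposed route is not on solid footing.
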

\begin{proof}
For the following decomposition
\begin{align}
\label{verticalAvgAndRest1}
\hat{\mathbf{Y}}_{\varepsilon_n,\kappa } (x) = \left\ulcorner \hat{\mathbf{Y}}_{\varepsilon_n,\kappa } \right\urcorner  (x_h)+
\llcorner \hat{\mathbf{Y}}_{\varepsilon_n,\kappa } \lrcorner (x),
\end{align}
we observe that
\begin{align}
\label{llulzero}
\left\ulcorner \llcorner \hat{\mathbf{Y}}_{\varepsilon_n,\kappa }\lrcorner \right\urcorner  =0
\end{align}
a.s. and as  such, we can find a $I\big(\hat{\mathbf{Y}}_{\varepsilon_n,\kappa }\big)$ such that
\begin{align}
\label{verticalAvgAndRest2}
\llcorner \hat{\mathbf{Y}}_{\varepsilon_n,\kappa }\lrcorner =\partial_{x_3}I\big(\hat{\mathbf{Y}}_{\varepsilon_n,\kappa }\big),\quad \left\ulcorner I\big(\hat{\mathbf{Y}}_{\varepsilon_n,\kappa }\big)\right\urcorner  =0
\end{align}
c.f. \cite[Section 3.3.2]{feireisl2012multi} and \cite[Section 3.2]{gallagher2006weak}. 

Let now rewrite \eqref{momEqCutoff} and mollify the resultant system to obtain the  following  
\begin{equation}
\begin{aligned}
\label{momEqCutoff01}
 \varepsilon \, \mathrm{d}\big[\eta_\varepsilon &(\hat{\varrho}_{\varepsilon_n} \hat{\mathbf{u}}_{\varepsilon_n})\big]_\kappa
 +
\mathbf{e}_3\times \big[\eta_\varepsilon  \big(\hat{\varrho}_{\varepsilon_n} \hat{\mathbf{u}}_{\varepsilon_n} \big) \big]_\kappa\,\mathrm{d}t
=  \varepsilon^{m-1}\big[\eta_\varepsilon (\hat{r}_{\varepsilon_n} \nabla G)\big]_\kappa\,\mathrm{d}t
 \\&
 +  \varepsilon \, \mathrm{div}\big[\eta_\varepsilon \mathbb{S}(\nabla \hat{\mathbf{u}}_{\varepsilon_n})
 -
 \eta_\varepsilon  (\hat{\varrho}_{\varepsilon_n} \hat{\mathbf{u}}_{\varepsilon_n} \otimes \hat{\mathbf{u}}_{\varepsilon_n})\big]_\kappa\mathrm{d}t
 \\
 &-
 \varepsilon\,\big[\nabla \eta_\varepsilon  \cdot \big( \mathbb{S}(\nabla \hat{\mathbf{u}}_{\varepsilon_n})-
 (\hat{\varrho}_{\varepsilon_n} \hat{\mathbf{u}}_{\varepsilon_n} \otimes \hat{\mathbf{u}}_{\varepsilon_n})\big) \big]_\kappa \,\mathrm{d}t
\\
&
 +
 \varepsilon^{1-2m}\Big( \big[\nabla\eta_\varepsilon \, (\hat{\varrho}_{\varepsilon_n}^\gamma - \overline{\varrho}_{\varepsilon}^\gamma)\big]_\kappa
-
\nabla \big[\eta_\varepsilon \big(\hat{\varrho}_{\varepsilon_n}^\gamma 
- \overline{\varrho}_{\varepsilon}^\gamma\big)\big]_\kappa \Big)\mathrm{d}t 
\\
& +
  \varepsilon\, \big[ \eta_\varepsilon  \,\Phi(\hat{\varrho}_{\varepsilon_n},\hat{\varrho}_{\varepsilon_n} \hat{\mathbf{u}}_{\varepsilon_n})\big]_\kappa\mathrm{d}\tilde{W}_{\varepsilon_n}
\\
 &=:
\varepsilon\, \mathbf{Q}_{\varepsilon_n,\kappa}\,\mathrm{d}t
 +
 \varepsilon^{1-2m}\mathbf{P}_{\varepsilon_n,\kappa}\,\mathrm{d}t
  +
  \varepsilon\, \Phi_{\varepsilon_n,\kappa}\mathrm{d}\tilde{W}_{n}
\end{aligned}
\end{equation}
where
\begin{equation}
\begin{aligned}
\mathbf{Q}_{\varepsilon_n,\kappa}&:=
  \varepsilon^{m-2}\big[\eta_\varepsilon (\hat{r}_{\varepsilon_n} \nabla G)\big]_\kappa
 \\&
 + \mathrm{div}\big[\eta_\varepsilon \mathbb{S}(\nabla \hat{\mathbf{u}}_{\varepsilon_n})
 -
 \eta_\varepsilon  (\hat{\varrho}_{\varepsilon_n} \hat{\mathbf{u}}_{\varepsilon_n} \otimes \hat{\mathbf{u}}_{\varepsilon_n})\big]_\kappa
 \\
 &-
\big[\nabla \eta_\varepsilon  \cdot \big( \mathbb{S}(\nabla \hat{\mathbf{u}}_{\varepsilon_n})-
 (\hat{\varrho}_{\varepsilon_n} \hat{\mathbf{u}}_{\varepsilon_n} \otimes \hat{\mathbf{u}}_{\varepsilon_n})\big) \big]_\kappa, 
\\
\mathbf{P}_{\varepsilon_n,\kappa}
&:=\Big( \big[\nabla\eta_\varepsilon \, (\hat{\varrho}_{\varepsilon_n}^\gamma - \overline{\varrho}_{\varepsilon}^\gamma)\big]_\kappa
-
\nabla \big[\eta_\varepsilon \big(\hat{\varrho}_{\varepsilon_n}^\gamma 
- \overline{\varrho}_{\varepsilon}^\gamma\big)\big]_\kappa \Big),
\\
\Phi_{\varepsilon_n,\kappa}
&:=  \varepsilon\, \big[ \eta_\varepsilon  \,\Phi(\hat{\varrho}_{\varepsilon_n},\hat{\varrho}_{\varepsilon_n} \hat{\mathbf{u}}_{\varepsilon_n})\big]_\kappa.
\end{aligned}
\end{equation}
Furthermore, rewriting \eqref{momEqCutoff01} in component form and differentiating results in the following
\begin{align}
 \varepsilon \, \mathrm{d}\partial_{x_i}\big(\eta_\varepsilon \hat{\varrho}_{\varepsilon_n} \hat{u}_{\varepsilon_n}\big)_\kappa^1
 &-
\partial_{x_i}\big(\eta_\varepsilon \hat{\varrho}_{\varepsilon_n} \hat{u}_{\varepsilon_n}\big)_\kappa^2 \,\mathrm{d}t 
=
\varepsilon\, \partial_{x_i}Q_{\varepsilon_n,\kappa}^1\,\mathrm{d}t
 \nonumber \\&+
 \varepsilon^{1-2m}\, \partial_{x_i}P_{\varepsilon_n,\kappa}^1\,\mathrm{d}t
 +
  \varepsilon\, \partial_{x_i}\Phi_{\varepsilon_n,\kappa}^1\mathrm{d}\tilde{W}_{n}
  \label{1d1a} \\
   \varepsilon \,  \mathrm{d}\partial_{x_i}\big(\eta_\varepsilon \hat{\varrho}_{\varepsilon_n} \hat{u}_{\varepsilon_n}\big)_\kappa^2
 &+
\partial_{x_i}\big(\eta_\varepsilon \hat{\varrho}_{\varepsilon_n} \hat{u}_{\varepsilon_n}\big)_\kappa^1 \,\mathrm{d}t 
=
\varepsilon\, \partial_{x_i}Q_{\varepsilon_n,\kappa}^2\,\mathrm{d}t
 \nonumber  \\&+
 \varepsilon^{1-2m}\, \partial_{x_i}P_{\varepsilon_n,\kappa}^2\,\mathrm{d}t
 +
  \varepsilon\,\partial_{x_i} \Phi_{\varepsilon_n,\kappa}^2\mathrm{d}\tilde{W}_{n}
   \label{1d2a}  \\
   &\varepsilon \, \mathrm{d}\partial_{x_i}\big(\eta_\varepsilon \hat{\varrho}_{\varepsilon_n} \hat{u}_{\varepsilon_n}\big)_\kappa^3
=
\varepsilon\, \partial_{x_i}Q_{\varepsilon_n,\kappa}^3\,\mathrm{d}t
 \nonumber \\&+
 \varepsilon^{1-2m}\, \partial_{x_i}P_{\varepsilon_n,\kappa}^3\,\mathrm{d}t
 +
  \varepsilon\, \partial_{x_i}\Phi_{\varepsilon_n,\kappa}^3\mathrm{d}\tilde{W}_{n}  \label{1d3a}
\end{align}
for  $i=1,2,3$.

We now recall that in terms of coordinates, we can write the decomposition of momentum into its solenoidal and gradient parts as 
\begin{align}
\label{sov-grad-part}
\big(\eta_\varepsilon\hat{\varrho}_{\varepsilon_n}\hat{u}_{\varepsilon_n}\big)^i_\kappa= \hat{Y}^i_{\varepsilon_n,\kappa} + \partial_{x_i}\hat{\Psi}_{\varepsilon_n,\kappa}, \quad i=1,2,3.
\end{align}
As such, the symmetry of the Hessian means that we can define the traceless skew-symmetric operator
\begin{align}
\label{momentumCommutator}
\hat{\Upsilon}^{ij}_{\varepsilon_n,\kappa}:=\partial_{x_i}\big(\eta_\varepsilon\hat{\varrho}_{\varepsilon_n}\hat{u}_{\varepsilon_n}\big)^j_\kappa - \partial_{x_j}\big(\eta_\varepsilon\hat{\varrho}_{\varepsilon_n}\hat{u}_{\varepsilon_n}\big)^i_\kappa=\partial_{x_i}\hat{Y}^j_{\varepsilon_n,\kappa}-\partial_{x_j}\hat{Y}^i_{\varepsilon_n,\kappa}
\end{align}
for $ i,j=1,2,3$.

By using the relation \eqref{momentumCommutator}, we get that \eqref{1d3a} with $i=2$ minus \eqref{1d2a} with $i=3$ yields
\begin{align}
\label{1dcommutator1a}
\varepsilon \mathrm{d}\,\hat{\Upsilon}_{\varepsilon_n,\kappa}^{23}-\partial_{x_3}\hat{Y}^1_{\varepsilon_n,\kappa}\mathrm{d}t  =\partial_{x_3x_1}\hat{\Psi}_{\varepsilon_n,\kappa} \mathrm{d}t 
+ 
\varepsilon Q^{23}_{\varepsilon_n,\kappa}\mathrm{d}t
 +
  \varepsilon  \Phi^{23}_{\varepsilon_n,\kappa}\,\mathrm{d}\tilde{W}_{n}
\end{align}
having used the additional decomposition \eqref{sov-grad-part}. In analogy with \eqref{momentumCommutator},
\begin{align*}
Q^{ij}_{\varepsilon_n,\kappa}:=\partial_{x_i}Q^j_{\varepsilon_n,\kappa}-\partial_{x_j}Q^i_{\varepsilon_n,\kappa}
\end{align*}
with an identical notation for the noise term. Similar to \eqref{1dcommutator1a}, \eqref{1d3a} with $i=1$ minus \eqref{1d1a} with $i=3$ yields
\begin{align}
\label{1dcommutator2a}
\varepsilon \mathrm{d}\,\hat{\Upsilon}_{\varepsilon_n,\kappa}^{13} + \partial_{x_3}\hat{Y}_{\varepsilon_n,\kappa}^2\mathrm{d}t   =-\partial_{x_3x_2}  \hat{\Psi}_{\varepsilon_n,\kappa} \mathrm{d}t+ 
 \varepsilon Q^{13}_{\varepsilon_n,\kappa}\mathrm{d}t
 +
  \varepsilon  \Phi^{13}_{\varepsilon_n,\kappa} \,\mathrm{d}\tilde{W}_{n}.
\end{align}
Lastly, \eqref{1d2a} with $i=1$ minus \eqref{1d1a}with $i=2$ gives
\begin{align}
\label{1dcommutator3a}
\varepsilon \mathrm{d}\,\hat{\Upsilon}_{\varepsilon_n,\kappa}^{12}+ \mathrm{div}_h \big[ \hat{\mathbf{Y}}_{\varepsilon_n,\kappa} \big]_h \mathrm{d}t   =  -\Delta_h \hat{\Psi}_{\varepsilon_n,\kappa} \mathrm{d}t
+
 \varepsilon Q^{12}_{\varepsilon_n,\kappa} \mathrm{d}t
 +
 \varepsilon  \Phi^{12}_{\varepsilon_n,\kappa} \mathrm{d}\tilde{W}_{n}
\end{align}
by the use of \eqref{sov-grad-part}  above. Now using the fact that $\mathrm{div}\,\hat{\mathbf{Y}}_{\varepsilon_n,\kappa}=0$, we have that
\begin{equation}
\begin{aligned}
\label{messy1}
\mathrm{div}\big( \hat{\mathbf{Y}}_{\varepsilon_n,\kappa} &\otimes \hat{\mathbf{Y}}_{\varepsilon_n,\kappa}\big)=
\hat{\mathbf{Y}}_{\varepsilon_n,\kappa} \cdot\nabla \hat{\mathbf{Y}}_{\varepsilon_n,\kappa}
\\
&=\frac{1}{2}\nabla \left\vert \hat{\mathbf{Y}}_{\varepsilon_n,\kappa}\right\vert^2 - \hat{\mathbf{Y}}_{\varepsilon_n,\kappa}\times\mathrm{curl}\left( \hat{\mathbf{Y}}_{\varepsilon_n,\kappa}\right).
\end{aligned}
\end{equation}
Furthermore, by linearity and commutativity of the curl and derivative operators, we can use \eqref{verticalAvgAndRest1}--\eqref{verticalAvgAndRest2} to get
\begin{equation}
\begin{aligned}
\label{messy2}
\mathrm{div}&\left( \hat{\mathbf{Y}}_{\varepsilon_n,\kappa} \otimes \hat{\mathbf{Y}}_{\varepsilon_n,\kappa}\right)
=
\frac{1}{2}\nabla \left\vert \hat{\mathbf{Y}}_{\varepsilon_n,\kappa}\right\vert^2 
- \ulcorner \hat{\mathbf{Y}}_{\varepsilon_n,\kappa} \urcorner \times\mathrm{curl}\,\ulcorner \hat{\mathbf{Y}}_{\varepsilon_n,\kappa} \urcorner
\\&
- \partial_{x_3}  \left[   I\big(\hat{\mathbf{Y}}_{\varepsilon_n,\kappa} \big)  \times\mathrm{curl}\,\ulcorner \hat{\mathbf{Y}}_{\varepsilon_n,\kappa} \urcorner   +    \ulcorner \hat{\mathbf{Y}}_{\varepsilon_n,\kappa} \urcorner \times   \mathrm{curl}\left(I\big( \hat{\mathbf{Y}}_{\varepsilon_n,\kappa} \big) \right)  \right] 
\\&
- \partial_{x_3}I\big(\hat{\mathbf{Y}}_{\varepsilon_n,\kappa} \big) \times   \mathrm{curl}\left(\partial_{x_3} I\big( \hat{\mathbf{Y}}_{\varepsilon_n,\kappa} \big) \right)
\end{aligned}
\end{equation}
where
\begin{align}
\label{messy2a}
\ulcorner 
\partial_{x_3}  \left[   I\big(\hat{\mathbf{Y}}_{\varepsilon_n,\kappa} \big)  \times\mathrm{curl}\,\ulcorner \hat{\mathbf{Y}}_{\varepsilon_n,\kappa} \urcorner   +    \ulcorner \hat{\mathbf{Y}}_{\varepsilon_n,\kappa} \urcorner \times   \mathrm{curl}\left(I\big( \hat{\mathbf{Y}}_{\varepsilon_n,\kappa} \big) \right)  \right] 
 \urcorner=0.
\end{align}
With \eqref{messy2a} in hand, we wish to show that the last term in \eqref{messy2} above converges to zero in a suitable sense. To see this, we perform a direct computation using \eqref{momentumCommutator} to gain
\begin{equation}
\begin{aligned}
\label{curlMess}
  \Big[ \partial_{x_3}&I\big(\hat{\mathbf{Y}}_{\varepsilon_n,\kappa} \big) \times   \mathrm{curl}\left(\partial_{x_3} I\big( \hat{\mathbf{Y}}_{\varepsilon_n,\kappa} \big) \right) \Big]^1
\\&=
\big(\partial_{x_3}I\big(\hat{Y}_{\varepsilon_n,\kappa}^2 \big) \big)
 \partial_{x_3}
\big[\partial_{x_1}I\big(\hat{Y}_{\varepsilon_n,\kappa}^2 \big) 
-
\partial_{x_2}I\big(\hat{Y}_{\varepsilon_n,\kappa}^1 \big) \big]
\\&-
\big(\partial_{x_3}I\big(\hat{Y}_{\varepsilon_n,\kappa}^3 \big) \big)
 \partial_{x_3}
\big[\partial_{x_3}I\big(\hat{Y}_{\varepsilon_n,\kappa}^1 \big) 
-
\partial_{x_1}I\big(\hat{Y}_{\varepsilon_n,\kappa}^3 \big) \big]
\\&=
\big(\partial_{x_3}I\big(\hat{Y}_{\varepsilon_n,\kappa}^2 \big) \big)
 \partial_{x_3}
I\big(\hat{\Upsilon}_{\varepsilon_n,\kappa}^{12} \big) -
\big(\partial_{x_3}I\big(\hat{Y}_{\varepsilon_n,\kappa}^3 \big) \big)
 \partial_{x_3}
I\big(\hat{\Upsilon}_{\varepsilon_n,\kappa}^{31} \big)
\\&=
 \partial_{x_3}\Big[\big(\partial_{x_3}I\big(\hat{Y}_{\varepsilon_n,\kappa}^2 \big) \big)
I\big(\hat{\Upsilon}_{\varepsilon_n,\kappa}^{12} \big) \Big]
-
\big(\partial_{x_3x_3}I\big(\hat{Y}_{\varepsilon_n,\kappa}^2 \big) \big)
I\big(\hat{\Upsilon}_{\varepsilon_n,\kappa}^{12} \big) 
\\&+
I\big(\mathrm{div}_h\big[\mathbf{\hat{Y}}_{\varepsilon_n,\kappa} \big]_h \big)
 \partial_{x_3}
I\big(\hat{\Upsilon}_{\varepsilon_n,\kappa}^{31} \big)
\end{aligned}
\end{equation}
where we have used $\mathrm{div}\big[\mathbf{\hat{Y}}_{\varepsilon_n,\kappa} \big]=0 $ and hence $\mathrm{div}_h\big[\mathbf{\hat{Y}}_{\varepsilon_n,\kappa} \big]_h = -\partial_{x_3}{\hat{Y}}_{\varepsilon_n,\kappa}^3$ in the last step above.

Since the vertical average of the first term on the right-hand side of \eqref{curlMess} vanishes, we concentrate on the last two terms. For this, we first use \eqref{1dcommutator2a} to formally obtain
\begin{equation}
\begin{aligned}
\label{1dcommutator2ax}
-\varepsilon &\Big[\mathrm{d}\,
 \partial_{x_3}I \big(\hat{\Upsilon}_{\varepsilon_n,\kappa}^{13} \big)\Big] I\big(\hat{\Upsilon}_{\varepsilon_n,\kappa}^{12} \big) 
 =  
\big(\partial_{x_3x_3}I\big(\hat{Y}_{\varepsilon_n,\kappa}^2 \big) \big)
I\big(\hat{\Upsilon}_{\varepsilon_n,\kappa}^{12} \big) \,\mathrm{d}t
 \\
& 
  +
 \partial_{x_3x_2}  \hat{\Psi}_{\varepsilon_n,\kappa} I\big(\hat{\Upsilon}_{\varepsilon_n,\kappa}^{12} \big)\mathrm{d}t
- 
 \varepsilon\big(  Q^{13}_{\varepsilon_n,\kappa} - \ulcorner \partial_{x_1} Q^3_{\varepsilon_n,\kappa} \urcorner \big) I\big(\hat{\Upsilon}_{\varepsilon_n,\kappa}^{12} \big) \mathrm{d}t
\\ 
&-
  \varepsilon \big(  \Phi^{13}_{\varepsilon_n,\kappa} - \ulcorner \partial_{x_1} \Phi^3_{\varepsilon_n,\kappa} \urcorner \big) I\big(\hat{\Upsilon}_{\varepsilon_n,\kappa}^{12} \big) \,\mathrm{d}\tilde{W}_{n}
\end{aligned}
\end{equation}
since
\begin{align*}
\partial_{x_3}I\Big[ \partial_{x_3x_2}  \hat{\Psi}_{\varepsilon_n,\kappa} \Big] &= \partial_{x_3x_2}  \hat{\Psi}_{\varepsilon_n,\kappa} 
-
\partial_{x_3}  \ulcorner\partial_{x_2}  \hat{\Psi}_{\varepsilon_n,\kappa} \urcorner = \partial_{x_3x_2}  \hat{\Psi}_{\varepsilon_n,\kappa} 
\\
\partial_{x_3}I \big[Q^{13}_{\varepsilon_n,\kappa} \big]
&=
Q^{13}_{\varepsilon_n,\kappa} - \ulcorner \partial_{x_1}Q^{3}_{\varepsilon_n,\kappa} \urcorner +  \partial_{x_3}\ulcorner Q^{1}_{\varepsilon_n,\kappa} \urcorner
\\&=
Q^{13}_{\varepsilon_n,\kappa}  - \ulcorner \partial_{x_1}Q^{3}_{\varepsilon_n,\kappa} \urcorner
\end{align*}
and similarly for the noise term. By using \eqref{1dcommutator3a}, we also formally  obtain 
\begin{equation}
\begin{aligned}
\label{1dcommutator2ay}
-&\varepsilon \big[ \mathrm{d}\,I\big(\hat{\Upsilon}_{\varepsilon_n,\kappa}^{12} \big) \big] \partial_{x_3}
I\big(\hat{\Upsilon}_{\varepsilon_n,\kappa}^{13} \big)
=
\varepsilon \big[ \mathrm{d}\,I\big(\hat{\Upsilon}_{\varepsilon_n,\kappa}^{12} \big) \big] \partial_{x_3}
I\big(\hat{\Upsilon}_{\varepsilon_n,\kappa}^{31} \big)
\\&=
-I\big(\mathrm{div}_h\big[\mathbf{\hat{Y}}_{\varepsilon_n,\kappa} \big]_h \big)
 \partial_{x_3}
I\big(\hat{\Upsilon}_{\varepsilon_n,\kappa}^{31} \big) \,\mathrm{d}t
-
I \big(\Delta_h \hat{\Psi}_{\varepsilon_n,\kappa} \big)
\partial_{x_3}
I\big(\hat{\Upsilon}_{\varepsilon_n,\kappa}^{31} \big) \,\mathrm{d}t
\\&+
 \varepsilon I\big( Q^{12}_{\varepsilon_n,\kappa}) \partial_{x_3}
I\big(\hat{\Upsilon}_{\varepsilon_n,\kappa}^{31} \big)\,\mathrm{d}t
+
 \varepsilon I \big( \Phi^{12}_{\varepsilon_n,\kappa}\big)\,
 \partial_{x_3}
I\big(\hat{\Upsilon}_{\varepsilon_n,\kappa}^{31} \big)\,\mathrm{d}\tilde{W}_{n}.
\end{aligned}
\end{equation}
Now by I\^to's formula, \cite[Theorem A.4.1]{breit2017stoch}, it follows from \eqref{1dcommutator2ax} and \eqref{1dcommutator2ay} that
\begin{equation}
\begin{aligned}
\label{1dcommutator2az}
-\varepsilon  &\mathrm{d}\,\big[ I\big(\hat{\Upsilon}_{\varepsilon_n,\kappa}^{12} \big)  \partial_{x_3}
I\big(\hat{\Upsilon}_{\varepsilon_n,\kappa}^{13} \big) \big]
=
-I\big(\mathrm{div}_h\big[\mathbf{\hat{Y}}_{\varepsilon_n,\kappa} \big]_h \big)
 \partial_{x_3}
I\big(\hat{\Upsilon}_{\varepsilon_n,\kappa}^{31} \big) \,\mathrm{d}t
\\&-
I \big(\Delta_h \hat{\Psi}_{\varepsilon_n,\kappa} \big)
\partial_{x_3}
I\big(\hat{\Upsilon}_{\varepsilon_n,\kappa}^{31} \big) \,\mathrm{d}t
+
 \varepsilon I\big( Q^{12}_{\varepsilon_n,\kappa}) \partial_{x_3}
I\big(\hat{\Upsilon}_{\varepsilon_n,\kappa}^{31} \big)\mathrm{d}t
\\&+
\big(\partial_{x_3x_3}I\big(\hat{Y}_{\varepsilon_n,\kappa}^2 \big) \big)
I\big(\hat{\Upsilon}_{\varepsilon_n,\kappa}^{12} \big) \,\mathrm{d}t 
  +
 \partial_{x_3x_2}  \hat{\Psi}_{\varepsilon_n,\kappa} I\big(\hat{\Upsilon}_{\varepsilon_n,\kappa}^{12} \big)\mathrm{d}t
 \\&- 
 \varepsilon\big(  Q^{13}_{\varepsilon_n,\kappa} - \ulcorner \partial_{x_1} Q^3_{\varepsilon_n,\kappa} \urcorner \big) I\big(\hat{\Upsilon}_{\varepsilon_n,\kappa}^{12} \big) \mathrm{d}t
\\&+
 \varepsilon \,
\sum_{k\in \mathbb{N}}
 I \big(  \mathbf{g}_{\varepsilon_n,\kappa, k}^{12}\big)\,
 \partial_{x_3}
I\big(\hat{\Upsilon}_{\varepsilon_n,\kappa}^{31} \big)\, \mathrm{d}\tilde{\beta}_{k}^{n}
\\ 
&-
  \varepsilon\,
\sum_{k\in \mathbb{N}} \big(  \mathbf{g}_{\varepsilon_n,\kappa,k}^{13} - \ulcorner \partial_{x_1} \mathbf{g}_{\varepsilon_n,\kappa,k}^3 \urcorner \big) I\big(\hat{\Upsilon}_{\varepsilon_n,\kappa}^{12} \big) \, \mathrm{d}\tilde{\beta}_{k}^{n}
  \\ 
&-\varepsilon\,
\sum_{k\in \mathbb{N}}
I(\mathbf{g}_{\varepsilon_n,\kappa,k}^{12})\big[\mathbf{g}_{\varepsilon_n,\kappa,k}^{13}
-
\ulcorner \partial_{x_1}\mathbf{g}_{\varepsilon_n,\kappa,k}^{3} \urcorner \big] \, 
\mathrm{d}t
\end{aligned}
\end{equation}
where
\begin{align*}
\mathbf{g}_{\varepsilon_n,\kappa,k}^{i} =[\Phi_{\varepsilon_n,\kappa}(e_k)]^i, \quad \mathbf{g}_{\varepsilon_n,\kappa,k}^{ij} = \partial_{x_i}[\Phi_{\varepsilon_n,\kappa}(e_k)]^j - \partial_{x_j}[\Phi_{\varepsilon_n,\kappa}(e_k)]^i.
\end{align*}
We can now rearrange \eqref{1dcommutator2az} to get
\begin{equation}
\begin{aligned}
\label{1dcommutator2azx}
-&
\big(\partial_{x_3x_3}I\big(\hat{Y}_{\varepsilon_n,\kappa}^2 \big) \big)
I\big(\hat{\Upsilon}_{\varepsilon_n,\kappa}^{12} \big) \,\mathrm{d}t 
+
I\big(\mathrm{div}_h\big[\mathbf{\hat{Y}}_{\varepsilon_n,\kappa} \big]_h \big)
 \partial_{x_3}
I\big(\hat{\Upsilon}_{\varepsilon_n,\kappa}^{31} \big) \,\mathrm{d}t
\\&=
\varepsilon  \mathrm{d}\,\big[ I\big(\hat{\Upsilon}_{\varepsilon_n,\kappa}^{12} \big)  \partial_{x_3}
I\big(\hat{\Upsilon}_{\varepsilon_n,\kappa}^{13} \big) \big]
+
 \varepsilon I\big( Q^{12}_{\varepsilon_n,\kappa}) \partial_{x_3}
I\big(\hat{\Upsilon}_{\varepsilon_n,\kappa}^{31} \big)\mathrm{d}t
 \\&- 
 \varepsilon\big(  Q^{13}_{\varepsilon_n,\kappa} - \ulcorner \partial_{x_1} Q^3_{\varepsilon_n,\kappa} \urcorner \big) I\big(\hat{\Upsilon}_{\varepsilon_n,\kappa}^{12} \big) \mathrm{d}t
\\&+
 \varepsilon \,
\sum_{k\in \mathbb{N}}
 I \big(  \mathbf{g}_{\varepsilon_n,\kappa,k}^{12}\big)\,
 \partial_{x_3}
I\big(\hat{\Upsilon}_{\varepsilon_n,\kappa}^{31} \big)\, \mathrm{d}\tilde{\beta}_{k}^{n}
\\ 
&-
  \varepsilon\,
\sum_{k\in \mathbb{N}} \big(  \mathbf{g}_{\varepsilon_n,\kappa,k}^{13} - \ulcorner \partial_{x_1} \mathbf{g}_{\varepsilon_n,\kappa,k}^3 \urcorner \big) I\big(\hat{\Upsilon}_{\varepsilon_n,\kappa}^{12} \big) \, \mathrm{d}\tilde{\beta}_{k}^{n}
  \\ 
&-\varepsilon\,
\sum_{k\in \mathbb{N}}
I(\mathbf{g}_{\varepsilon_n,\kappa,k}^{12})\big[\mathbf{g}_{\varepsilon_n,\kappa,k}^{13}
-
\ulcorner \partial_{x_1}\mathbf{g}_{\varepsilon_n,\kappa,k}^{3} \urcorner \big] \, \mathrm{d}t
\\&-
I \big(\Delta_h \hat{\Psi}_{\varepsilon_n,\kappa} \big)
\partial_{x_3}
I\big(\hat{\Upsilon}_{\varepsilon_n,\kappa}^{31} \big) \,\mathrm{d}t
  +
 \partial_{x_3x_2}  \hat{\Psi}_{\varepsilon_n,\kappa} I\big(\hat{\Upsilon}_{\varepsilon_n,\kappa}^{12} \big)\mathrm{d}t.
\end{aligned}
\end{equation}
Substituting  \eqref{1dcommutator2ax}--\eqref{1dcommutator2azx} into \eqref{curlMess}, we have shown that
\begin{equation}
\begin{aligned}
\label{curl0Mess}
  \Big[ &\partial_{x_3}I\big(\hat{\mathbf{Y}}_{\varepsilon_n,\kappa} \big) \times   \mathrm{curl}\left(\partial_{x_3} I\big( \hat{\mathbf{Y}}_{\varepsilon_n,\kappa} \big) \right) \Big]^1 \,\mathrm{d}t
\\&=
 \partial_{x_3}\Big[\big(\partial_{x_3}I\big(\hat{Y}_{\varepsilon_n,\kappa}^2 \big) \big)
I\big(\hat{\Upsilon}_{\varepsilon_n,\kappa}^{12} \big) \Big] \,\mathrm{d}t
+
\varepsilon  \mathrm{d}\,\big[ I\big(\hat{\Upsilon}_{\varepsilon_n,\kappa}^{12} \big)  \partial_{x_3}
I\big(\hat{\Upsilon}_{\varepsilon_n,\kappa}^{13} \big) \big]
\\&+
 \varepsilon I\big( Q^{12}_{\varepsilon_n,\kappa}) \partial_{x_3}
I\big(\hat{\Upsilon}_{\varepsilon_n,\kappa}^{31} \big)\mathrm{d}t
 - 
 \varepsilon\big(  Q^{13}_{\varepsilon_n,\kappa} - \ulcorner \partial_{x_1} Q^3_{\varepsilon_n,\kappa} \urcorner \big) I\big(\hat{\Upsilon}_{\varepsilon_n,\kappa}^{12} \big) \mathrm{d}t
\\&+
 \varepsilon \,
\sum_{k\in \mathbb{N}}
 I \big(  \mathbf{g}_{\varepsilon_n,\kappa,k}^{12}\big)\,
 \partial_{x_3}
I\big(\hat{\Upsilon}_{\varepsilon_n,\kappa}^{31} \big)\, \mathrm{d}\tilde{\beta}_{k}^{n}
\\ 
&-
  \varepsilon\,
\sum_{k\in \mathbb{N}} \big(  \mathbf{g}_{\varepsilon_n,\kappa,k}^{13} - \ulcorner \partial_{x_1} \mathbf{g}_{\varepsilon_n,\kappa,k}^3 \urcorner \big) I\big(\hat{\Upsilon}_{\varepsilon_n,\kappa}^{12} \big) \, \mathrm{d}\tilde{\beta}_{k}^{n}
  \\ 
&-\varepsilon\,
\sum_{k\in \mathbb{N}}
I(\mathbf{g}_{\varepsilon_n,\kappa,k}^{12})\big[\mathbf{g}_{\varepsilon_n,\kappa,k}^{13}
-
\ulcorner \partial_{x_1}\mathbf{g}_{\varepsilon_n,\kappa,k}^{3} \urcorner \big]  \, \mathrm{d}t
\\&-
I \big(\Delta_h \hat{\Psi}_{\varepsilon_n,\kappa} \big)
\partial_{x_3}
I\big(\hat{\Upsilon}_{\varepsilon_n,\kappa}^{31} \big) \,\mathrm{d}t
  +
 \partial_{x_3x_2}  \hat{\Psi}_{\varepsilon_n,\kappa} I\big(\hat{\Upsilon}_{\varepsilon_n,\kappa}^{12} \big)\mathrm{d}t
 \\&
 =:J_1 +\ldots + J_9.
\end{aligned}
\end{equation}
Similarly, we gain
\begin{equation}
\begin{aligned}
\label{curl1Mess}
  \Big[ &\partial_{x_3}I\big(\hat{\mathbf{Y}}_{\varepsilon_n,\kappa} \big) \times   \mathrm{curl}\left(\partial_{x_3} I\big( \hat{\mathbf{Y}}_{\varepsilon_n,\kappa} \big) \right) \Big]^2 \,\mathrm{d}t
\\&=
 \partial_{x_3}\Big[\big(\partial_{x_3}I\big(\hat{Y}_{\varepsilon_n,\kappa}^1 \big) \big)
I\big(\hat{\Upsilon}_{\varepsilon_n,\kappa}^{21} \big) \Big] \,\mathrm{d}t
+
\varepsilon  \mathrm{d}\,\big[ I\big(\hat{\Upsilon}_{\varepsilon_n,\kappa}^{12} \big)  \partial_{x_3}
I\big(\hat{\Upsilon}_{\varepsilon_n,\kappa}^{23} \big) \big]
\\&+
 \varepsilon I\big( Q^{12}_{\varepsilon_n,\kappa}) \partial_{x_3}
I\big(\hat{\Upsilon}_{\varepsilon_n,\kappa}^{23} \big)\mathrm{d}t
 - 
 \varepsilon\big(  Q^{23}_{\varepsilon_n,\kappa} - \ulcorner \partial_{x_2} Q^3_{\varepsilon_n,\kappa} \urcorner \big) I\big(\hat{\Upsilon}_{\varepsilon_n,\kappa}^{12} \big) \mathrm{d}t
\\&+
 \varepsilon \,
\sum_{k\in \mathbb{N}}
 I \big(  \mathbf{g}_{\varepsilon_n,\kappa,k}^{12}\big)\,
 \partial_{x_3}
I\big(\hat{\Upsilon}_{\varepsilon_n,\kappa}^{23} \big)\, \mathrm{d}\tilde{\beta}_{k}^{n}
\\ 
&-
  \varepsilon\,
\sum_{k\in \mathbb{N}} \big(  \mathbf{g}_{\varepsilon_n,\kappa,k}^{23} - \ulcorner \partial_{x_2} \mathbf{g}_{\varepsilon_n,\kappa,k}^3 \urcorner \big) I\big(\hat{\Upsilon}_{\varepsilon_n,\kappa}^{12} \big) \, \mathrm{d}\tilde{\beta}_{k}^{n}
  \\ 
&-\varepsilon\,
\sum_{k\in \mathbb{N}}
I(\mathbf{g}_{\varepsilon_n,\kappa,k}^{12})\big[\mathbf{g}_{\varepsilon_n,\kappa,k}^{23}
-
\ulcorner \partial_{x_2}\mathbf{g}_{\varepsilon_n,\kappa,k}^{3} \urcorner \big]  \, \mathrm{d}t
\\&+
I \big(\Delta_h \hat{\Psi}_{\varepsilon_n,\kappa} \big)
\partial_{x_3}
I\big(\hat{\Upsilon}_{\varepsilon_n,\kappa}^{23} \big) \,\mathrm{d}t
  -
 \partial_{x_3x_1}  \hat{\Psi}_{\varepsilon_n,\kappa} I\big(\hat{\Upsilon}_{\varepsilon_n,\kappa}^{12} \big)\mathrm{d}t 
 \\&
 =:K_1 +\ldots + K_9
\end{aligned}
\end{equation}
where both $J_1$ and $K_1$ vanishes after the taking of vertical averages. Furthermore, given  that by Proposition \ref{prop:Jakubow0}, the smoothness of $\mathbf{g}_{\varepsilon_n,\kappa,k}$ is preserved and that the terms $J_2, \ldots, J_7$ and $K_2, \ldots, K_7$ are smooth and hence uniformly bounded in $\varepsilon_n$ in suitable Bochner spaces, recall Section \ref{sec:proofofmain}, we are left to worry about the terms $J_8,J_9,K_8$ and $K_9$. This is because, per the explanation for  $J_2, \ldots, J_7$ and $K_2, \ldots, K_7$ above, we obtain that for any $\bm{\phi}=[\underline{\bm{\phi}}(x_h),0]$ with $\underline{\bm{\phi}}\in C^\infty_{c,\mathrm{div}_h}(\mathbb{R}^2)$, any $\psi\in L^2(\Omega \times (0,t))$ and any $t\in[0,T]$,
 \begin{equation}
\begin{aligned}
\tilde{\mathbb{E}}\, \int_0^t\big\langle J_i\,,\,  \bm{\phi}\, \psi \big\rangle\,\mathrm{d}\tau
&\lesssim \varepsilon \big\Vert J_i \big\Vert_{L^{p_1}_\omega L^{p_2}_t L^{p_3}_x}  \big\Vert \bm{\phi} \big\Vert_{L^{q}_x}  \big\Vert \psi \big\Vert_{L^{2}_{\omega,t}}
 \\
 &\lesssim \varepsilon \rightarrow 0
\end{aligned}
\end{equation}
for suitable $p_i \geq 1$ and $q \geq 1$ as $\varepsilon \rightarrow 0$. The same applies for the $K_i$'s with $i=2,\ldots,7$. The noise terms follow in a similar manner by the use of It\^o isometry and the fact that squared-integrable functions (in probability) are integrable. For these extra terms $J_8,J_9,K_8$ and $K_9$, we observe that for any $t\in(0,T)$ and any $i,j=1,2,3$,
\begin{equation}
\begin{aligned}
\label{vanishVanish}
\int_0^t \Vert \partial_{x_jx_i}  \hat{\Psi}_{\varepsilon_n,\kappa} &\Vert^2_{L^2(K)} \, \mathrm{d}t
\lesssim
\int_0^T \Vert \partial_{x_i} \wp_\kappa \Vert^2_{L^\frac{2\gamma}{2\gamma-1}(K)}\Vert \nabla \hat{\Psi}_{\varepsilon_n} \Vert^2_{L^\frac{2\gamma}{\gamma+1}(K)} \, \mathrm{d}t 
\\&
\lesssim_\kappa
\int_0^T \Vert \mathcal{Q}(\hat{\varrho}_{\varepsilon_n}\hat{\mathbf{u}}_{\varepsilon_n})  \Vert^2_{L^\frac{2\gamma}{\gamma+1}(K)} \, \mathrm{d}t \rightarrow0
\end{aligned}
\end{equation}
$\tilde{\mathbb{P}}$-a.s. for $K\Subset \mathcal{O}$ as $n\rightarrow\infty$. This follows by \eqref{acousticMomen} hence terms $J_8,J_9,K_8$ and $K_9$ also vanishes in the limit.

We can now take the vertical average in \eqref{messy1} and combine it with \eqref{messy2}, \eqref{curl0Mess}, \eqref{curl1Mess} (keeping in mind, the argument after \eqref{curl1Mess}) and \eqref{vanishVanish} to get
 that for any $\bm{\phi}=[\underline{\bm{\phi}}(x_h),0]$ with $\underline{\bm{\phi}}\in C^\infty_{c,\mathrm{div}_h}(\mathbb{R}^2)$, any $\psi\in L^2(\Omega \times (0,t))$ and $t\in[0,T]$ that
 \begin{equation}
\begin{aligned}
-\lim_{n\rightarrow\infty}&\tilde{\mathbb{E}}\, \int_0^t\left\langle  \hat{\mathbf{Y}}_{\varepsilon_n,\kappa} \otimes \hat{\mathbf{Y}}_{\varepsilon_n,\kappa}\,,\, \nabla \bm{\phi}\, \psi \right\rangle\,\mathrm{d}\tau
 \\
 &=
\lim_{n\rightarrow\infty} \tilde{\mathbb{E}}\,  \int_0^t\int_{\mathbb{R}^2}\ulcorner   \mathrm{div}\big( \hat{\mathbf{Y}}_{\varepsilon_n,\kappa} \otimes \hat{\mathbf{Y}}_{\varepsilon_n,\kappa}\big)
 \urcorner\cdot \bm{\phi} \, \psi\,\mathrm{d}x_h\,\mathrm{d}\tau
 \\
 &=
 \lim_{n\rightarrow\infty} \tilde{\mathbb{E}}\, 
\int_0^t\int_{\mathbb{R}^2}\ulcorner \Bigg[   \frac{1}{2}\nabla \left\vert \hat{\mathbf{Y}}_{\varepsilon_n,\kappa}\right\vert^2 
\\&
- \ulcorner \hat{\mathbf{Y}}_{\varepsilon_n,\kappa} \urcorner \times\mathrm{curl}\ulcorner \hat{\mathbf{Y}}_{\varepsilon_n,\kappa} \urcorner
\Bigg] \urcorner\cdot \bm{\phi}\,\psi\,\mathrm{d}x_h\mathrm{d}\tau
\\
&=\tilde{\mathbb{E}}\, 
\int_0^t\int_{\mathbb{R}^2} \mathrm{div}\big( \hat{\mathbf{U}}_{\kappa} \otimes \hat{\mathbf{U}}_{\kappa}\big)
 \cdot \bm{\phi} \, \psi\,\mathrm{d}x_h\,\mathrm{d}\tau.
\end{aligned}
\end{equation}
This is because the gradient term vanishes after integration by part whereas  convergence to velocity holds for the vertical average of the solenoidal part of momentum. cf.  \eqref{vertSolenoiStrongConv}.
Subsequently, since $\psi$ is arbitrary, we gain
\begin{equation}
\begin{aligned}
\label{messy4}
-&\lim_{n\rightarrow\infty}\int_0^t\left\langle  \hat{\mathbf{Y}}_{\varepsilon_n,\kappa} \otimes \hat{\mathbf{Y}}_{\varepsilon_n,\kappa} , \nabla \bm{\phi} \right\rangle \mathrm{d}\tau
\\&=
\int_0^t\int_{\mathbb{R}^2} \mathrm{div}\big( \hat{\mathbf{U}}_{\kappa} \otimes \hat{\mathbf{U}}_{\kappa}\big)
 \cdot \bm{\phi} \mathrm{d}x_h \mathrm{d}\tau
\end{aligned}
\end{equation}
$\tilde{\mathbb{P}}$-a.s. possibly for a further subsequence. 
\end{proof}
\subsection{Identifying the convective term}
\label{sec:convectiveTerm}
We show in this section that in identifying the limit of the convective term, we may essentially replace the sequence  $\left(\hat{\varrho}_{\varepsilon_n}\hat{\mathbf{u}}_{\varepsilon_n}  \otimes \hat{\mathbf{u}}_{\varepsilon_n}\right)$ by the mollified term $\left(\hat{\varrho}_{\varepsilon_n}\hat{\mathbf{u}}_{\varepsilon_n} \right)_\kappa \otimes\left( \hat{\varrho}_{\varepsilon_n}\hat{\mathbf{u}}_{\varepsilon_n} \right)_\kappa$ since their limits coincide. This follows from the proof of the lemma below.
\begin{lem}
\label{lem:convectiveConvergence}
For any $\bm{\phi}=[\underline{\bm{\phi}}(x_h),0]$ with $\underline{\bm{\phi}}\in C^\infty_{c,\mathrm{div}_h}(\mathbb{R}^2)$ and for all $t\in[0,T]$, the converges
\begin{equation}
\begin{aligned}
\label{convectiveConver}
\lim_{n\rightarrow \infty}\int_0^t\left\langle  \hat{\varrho}_{\varepsilon_n}\hat{\mathbf{u}}_{\varepsilon_n} \otimes \hat{\mathbf{u}}_{\varepsilon_n}\,,\, \nabla\bm{\phi}\right\rangle\,\mathrm{d}\tau
&=
\int_0^t\left\langle  \hat{\mathbf{U}} \otimes \hat{\mathbf{U}}\,,\, \nabla\bm{\phi}\right\rangle\,\mathrm{d}\tau
\end{aligned}
\end{equation}
holds $\tilde{\mathbb{P}}$-a.s.
\end{lem}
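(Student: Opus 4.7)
The plan is to reduce the convective integrand to a quadratic expression in the mollified solenoidal momentum so that Proposition \ref{prop:avgConvDecom} applies, and then to pass to the limit in the mollification parameter. Since $\bm{\phi}$ is compactly supported in the horizontal directions, the cut-off $\eta_\varepsilon$ from \eqref{newCufOff} equals one on $\mathrm{spt}(\bm{\phi})$ for $\varepsilon$ small, so one is free to insert factors of $\eta_\varepsilon$ in either slot of the tensor product at no cost.

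The argument proceeds in four stages. First I would replace one of the velocity factors by a momentum factor using Lemma \ref{lem:strongDensity}: the pointwise identity $\hat{\varrho}_{\varepsilon_n}\hat{\mathbf{u}}_{\varepsilon_n}\otimes \hat{\mathbf{u}}_{\varepsilon_n} - \hat{\varrho}_{\varepsilon_n}\hat{\mathbf{u}}_{\varepsilon_n}\otimes \hat{\varrho}_{\varepsilon_n}\hat{\mathbf{u}}_{\varepsilon_n} = \hat{\varrho}_{\varepsilon_n}\hat{\mathbf{u}}_{\varepsilon_n}\otimes (1-\hat{\varrho}_{\varepsilon_n})\hat{\mathbf{u}}_{\varepsilon_n}$ shows that the error vanishes in $L^1((0,t)\times K)$ upon combining $(1-\hat{\varrho}_{\varepsilon_n})\to 0$ in $L^\infty(0,T;L^{\min\{2,\gamma\}}_{\mathrm{loc}})$ with the uniform bounds \eqref{velo}, \eqref{momentum} and H\"older's inequality. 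Second, I would mollify by $\wp_\kappa$; the mollification error in the resulting integrand goes to zero as $\kappa \to 0$ uniformly in $n$ by the uniform bound \eqref{convectiv} on $\hat{\varrho}_{\varepsilon_n}\hat{\mathbf{u}}_{\varepsilon_n}\otimes\hat{\mathbf{u}}_{\varepsilon_n}$ and standard properties of convolution. Third, I would apply the Helmholtz decomposition $(\eta_\varepsilon \hat{\varrho}_{\varepsilon_n}\hat{\mathbf{u}}_{\varepsilon_n})_\kappa = \hat{\mathbf{Y}}_{\varepsilon_n,\kappa} + \nabla \hat{\Psi}_{\varepsilon_n,\kappa}$ in each factor and expand the tensor product: every cross-term or quadratic term carrying at least one factor $\nabla \hat{\Psi}_{\varepsilon_n,\kappa}$ vanishes as $n \to \infty$ for fixed $\kappa$, since Lemma \ref{lem:uniformGrad} gives $\nabla \hat{\Psi}_{\varepsilon_n,\kappa} \to 0$ in $L^2((0,T)\times K)$ while the conjugate factor is uniformly controlled in the dual integrability exponent (via \eqref{momentum}, the continuity of $\mathcal{P}$ and the smoothing action of $\wp_\kappa$). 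The surviving piece $\int_0^t \langle \hat{\mathbf{Y}}_{\varepsilon_n,\kappa}\otimes \hat{\mathbf{Y}}_{\varepsilon_n,\kappa}\,,\,\nabla \bm{\phi}\rangle\,\mathrm{d}\tau$ is exactly the object handled by Proposition \ref{prop:avgConvDecom}, whose conclusion - after integration by parts against the solenoidal $\bm{\phi}$ - produces $\int_0^t \langle \hat{\mathbf{U}}_\kappa \otimes \hat{\mathbf{U}}_\kappa\,,\,\nabla \bm{\phi}\rangle_h\,\mathrm{d}\tau$ in the limit. Fourth, I would let $\kappa \to 0$, using $\hat{\mathbf{U}}_\kappa \to \hat{\mathbf{U}}$ in $L^2(0,T;L^2_{\mathrm{loc}}(\mathbb{R}^2))$ (a consequence of $\hat{\mathbf{U}} \in L^2(0,T;W^{1,2}(\mathcal{O}))$) and close the argument by a diagonal extraction choosing $\kappa = \kappa(n)\to 0$ sufficiently slowly.

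The main obstacle is the coordination of these three layered approximations - cut-off, mollification, Helmholtz projection - so that every error estimate is uniform in $n$ and the order of the limits $n \to \infty$ followed by $\kappa \to 0$ is justified. The genuinely delicate piece of the convective analysis, namely the vanishing contribution of the vertical-coordinate-dependent (oscillatory) part of the solenoidal momentum, has already been absorbed into Proposition \ref{prop:avgConvDecom} through the antiderivative $I$ and the skew-symmetric commutators $\hat{\Upsilon}^{ij}_{\varepsilon_n,\kappa}$, so the remaining work here is largely bookkeeping of integrability exponents as one passes through these successive regularizations.
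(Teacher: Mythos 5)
Your plan has the right overall skeleton (replace the convective term by a mollified quadratic expression in the solenoidal momentum, invoke Proposition \ref{prop:avgConvDecom}, then send $\kappa \to 0$), and stages three and four are sound, but stage two contains a genuine gap that the paper's decomposition \eqref{decompositionConvective} is specifically engineered to avoid.

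The claim that the mollification error
\[
\hat{\varrho}_{\varepsilon_n}\hat{\mathbf{u}}_{\varepsilon_n}\otimes\hat{\varrho}_{\varepsilon_n}\hat{\mathbf{u}}_{\varepsilon_n}
-
\big(\hat{\varrho}_{\varepsilon_n}\hat{\mathbf{u}}_{\varepsilon_n}\big)_\kappa\otimes\big(\hat{\varrho}_{\varepsilon_n}\hat{\mathbf{u}}_{\varepsilon_n}\big)_\kappa
\]
vanishes ``uniformly in $n$'' as $\kappa\to 0$ by the Lebesgue bound \eqref{convectiv} is false. A uniform $L^p$ bound alone yields no uniform-in-$n$ mollification rate: the family $f_n(x)=\sin(nx)$ is bounded in $L^2$, yet $\|f_{n,\kappa}-f_n\|_{L^2}$ does not tend to zero uniformly in $n$ as $\kappa\to0$. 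To get the needed uniformity one must mollify a quantity with a uniform derivative bound. The paper's decomposition \eqref{decompositionConvective} accomplishes this by placing the mollification error on the \emph{velocity}: the terms $J_5=\hat{\mathbf{u}}_{\varepsilon_n,\kappa}\otimes(\hat{\mathbf{u}}_{\varepsilon_n}-\hat{\mathbf{u}}_{\varepsilon_n,\kappa})$ and $J_6$ are controlled by \eqref{mollVelToVel}, which is uniform in $\varepsilon_n$ precisely because $\hat{\mathbf{u}}_{\varepsilon_n}$ is uniformly bounded in $L^2_\omega L^2_t W^{1,2}_x$ by \eqref{velo}. The mollified \emph{momentum} appears only in $J_7$, and for fixed $\kappa$ its contribution is handled by the strong compactness \eqref{vertSolenoiStrongConv} and \eqref{acousticMomen} together with Proposition \ref{prop:avgConvDecom}; no uniform-in-$n$ mollification estimate for the momentum is ever required.

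A secondary but related difficulty is that the intermediate quantity $\hat{\varrho}_{\varepsilon_n}\hat{\mathbf{u}}_{\varepsilon_n}\otimes\hat{\varrho}_{\varepsilon_n}\hat{\mathbf{u}}_{\varepsilon_n}$, which your stage-one replacement introduces \emph{before} mollification, lives only in $L^\infty_t L^{\gamma/(\gamma+1)}_{\mathrm{loc}}$ and is not locally integrable for $\gamma$ near $3/2$; and the H\"older arithmetic you propose for the error $\hat{\varrho}_{\varepsilon_n}\hat{\mathbf{u}}_{\varepsilon_n}\otimes(1-\hat{\varrho}_{\varepsilon_n})\hat{\mathbf{u}}_{\varepsilon_n}$ — pairing \eqref{momentum}, Lemma \ref{lem:strongDensity} and \eqref{velo} — exceeds $1$ for $\gamma=3/2$. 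The paper's $J_2=(\hat{\varrho}_{\varepsilon_n}-1)\hat{\mathbf{u}}_{\varepsilon_n}\otimes\hat{\mathbf{u}}_{\varepsilon_n}$ avoids this: it keeps both tensor slots as velocities (in $L^1_t L^3_x$) and the density deviation as a single multiplicative factor in $L^\infty_t L^{\min\{2,\gamma\}}_x$, for which H\"older does close. In short, the order in which you swap density factors and mollify matters; the paper's rearrangement \eqref{decompositionConvective} is not merely bookkeeping but the point at which the uniform Sobolev bound on the velocity is converted into a usable uniform-in-$n$ mollification rate.
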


\begin{proof}
To show \eqref{convectiveConver}, we first make the following decomposition\footnote{This decomposition is not unique and in fact, a much simpler variant suffices.}
\begin{equation}
\begin{aligned}
\label{decompositionConvective}
&\hat{\varrho}_{\varepsilon_n}\hat{\mathbf{u}}_{\varepsilon_n} \otimes \hat{\mathbf{u}}_{\varepsilon_n} = \left[ (1-\hat{\varrho}_{\varepsilon_n}) \hat{\mathbf{u}}_{\varepsilon_n}\right]_\kappa
\otimes \left[ (\hat{\varrho}_{\varepsilon_n}-1)
\hat{\mathbf{u}}_{\varepsilon_n}\right]_\kappa
\\
&+
 (\hat{\varrho}_{\varepsilon_n}-1)
\hat{\mathbf{u}}_{\varepsilon_n}
\otimes \hat{\mathbf{u}}_{\varepsilon_n}
+
\left[ (1-\hat{\varrho}_{\varepsilon_n})
\hat{\mathbf{u}}_{\varepsilon_n}\right]_\kappa
\otimes \hat{\mathbf{u}}_{\varepsilon_n,\kappa}
\\
&+
\hat{\mathbf{u}}_{\varepsilon_n,\kappa} \otimes \left[ (1-\hat{\varrho}_{\varepsilon_n})
\hat{\mathbf{u}}_{\varepsilon_n}\right]_\kappa
+
\hat{\mathbf{u}}_{\varepsilon_n,\kappa}\otimes \left(\hat{\mathbf{u}}_{\varepsilon_n}-\hat{\mathbf{u}}_{\varepsilon_n,\kappa} \right)
\\
&+
\big(\hat{\mathbf{u}}_{\varepsilon_n} - \hat{\mathbf{u}}_{\varepsilon_n,\kappa}  \big) \otimes \hat{\mathbf{u}}_{\varepsilon_n}
+
\big(\hat{\varrho}_{\varepsilon_n}\hat{\mathbf{u}}_{\varepsilon_n}\big)_\kappa  \otimes\big(\hat{\varrho}_{\varepsilon_n}\hat{\mathbf{u}}_{\varepsilon_n}\big)_\kappa 
=:\sum_{i=1}^7J_i.
\end{aligned}
\end{equation}
Then we observe that for any ball $B_k$ of  radius $k>0$ and fixed regularizing kernel $\kappa>0$,
\begin{align*}
J_i \rightarrow 0 \quad \text{ in }\quad L^1((0,T)\times B_k)
\end{align*}
$\tilde{\mathbb{P}}$-a.s. for each $i\in\{ 1,2,3,4\}$ as $n\rightarrow \infty$. This follows from Lemma \ref{lem:strongDensity} and the equality of laws given by Proposition \ref{prop:Jakubow0}. 

Now we notice that for the random variable $\hat{\mathbf{u}}_{\varepsilon_n}\in  L^2(0,T;W^{1,2}(B_k))$,  one can find a  constant $c>0$ independent of both $\kappa$  and  $\varepsilon_n$ such that
\begin{align}
\label{mollVelToVel}
\left\Vert \hat{\mathbf{u}}_{\varepsilon_n}-\hat{\mathbf{u}}_{\varepsilon_n,\kappa}\right\Vert_{L^2(\Omega;L^2(0,T;L^p(B_k)))}
\lesssim\kappa^{{3}/{p}-{1}/{2}}\, 
\end{align}
holds uniformly in $\varepsilon_n>0$ for all $p\in[2,6)$, cf. \cite[Section 3.2]{gallagher2006weak}. 
It follows from a density argument and the Riesz representation theorem for integrable functions
  that for all $\bm{\phi}(x)\in C_c^\infty( \mathcal{O})$ and all $\bm{\phi}_1(\omega)\in L^\infty(\Omega)$, $\bm{\phi}_2(t)\in L^\infty(0,T)$, one can find a generic constant $c>0$ that is uniform in both $\varepsilon_n$ and $\kappa$ such that
\begin{equation}
\begin{aligned}
\Big\Vert&\big\langle \hat{\mathbf{u}}_{\varepsilon_n,\kappa}\otimes \big(\hat{\mathbf{u}}_{\varepsilon_n}-\hat{\mathbf{u}}_{\varepsilon_n,\kappa} \big)  \,,\, \nabla \bm{\phi} \big\rangle
 \Big\Vert_{L^1_{\omega,t}}
\\
& =
 \sup_{\Vert\bm{\phi}_1\bm{\phi}_2\Vert_\infty\leq1}\tilde{\mathbb{E}}\int_0^T\big\langle \hat{\mathbf{u}}_{\varepsilon_n,\kappa}\otimes \big(\hat{\mathbf{u}}_{\varepsilon_n}-\hat{\mathbf{u}}_{\varepsilon_n,\kappa} \big)  \,,\, \nabla \bm{\phi}\big\rangle\,\bm{\phi}_1(\omega)\bm{\phi}_2(t)\,\mathrm{d}t
 \\
&\lesssim
\Vert \nabla\bm{\phi}\Vert_{L^2_x}
\Vert \hat{\mathbf{u}}_{\varepsilon_n,\kappa} \Vert_{L^2_{\omega,t}L^6_x}
\,\Vert \hat{\mathbf{u}}_{\varepsilon_n}-\hat{\mathbf{u}}_{\varepsilon_n,\kappa}  \Vert_{L^2_{\omega,t}L^3_x}
\\
&\lesssim
\Vert \hat{\mathbf{u}}_{\varepsilon_n} \Vert_{L^2_{\omega,t}L^6_x}
\,\Vert \hat{\mathbf{u}}_{\varepsilon_n}-\hat{\mathbf{u}}_{\varepsilon_n,\kappa}  \Vert_{L^2_{\omega,t}W^{1,2}_x}^{1/2}
\,\Vert \hat{\mathbf{u}}_{\varepsilon_n}-\hat{\mathbf{u}}_{\varepsilon_n,\kappa}  \Vert_{L^2_{\omega,t}L^2_x}^{1/2}
\\&\lesssim 
\sqrt{\kappa}\,
\Vert \hat{\mathbf{u}}_{\varepsilon_n} \Vert_{L^2_{\omega,t}W^{1,2}_x}
\,\lesssim\sqrt{\kappa}.
\end{aligned}
\end{equation}
It follows that  as $n\rightarrow \infty$ and $\kappa\rightarrow0$, $\int_0^T\langle J_5\,,\, \nabla\bm{\phi}\rangle\,\mathrm{d}t$ vanishes in mean and hence in probability.  The same argument holds for $J_6$. 

As a consequence of  \eqref{acousticMomen}, \eqref{messy4}, \eqref{decompositionConvective} and the above convergence to zero results, one has  that
\begin{equation}
\begin{aligned}
\label{lastMess}
\lim_{n \rightarrow \infty}&\int_0^t\big\langle  \hat{\varrho}_{\varepsilon_n}\hat{\mathbf{u}}_{\varepsilon_n} \otimes \hat{\mathbf{u}}_{\varepsilon_n}\,,\, \nabla\bm{\phi}\big\rangle\,\mathrm{d}\tau
\\&=
\lim_{\kappa\rightarrow0}\lim_{n\rightarrow \infty} \int_0^t\left\langle  \hat{\mathbf{Y}}_{\varepsilon_n,\kappa} \otimes \hat{\mathbf{Y}}_{\varepsilon_n,\kappa}  \,,\, \nabla\bm{\phi}\right\rangle\,\mathrm{d}\tau
\\
&=
\int_0^t\left\langle  \hat{\mathbf{U}} \otimes \hat{\mathbf{U}}  \,,\, \nabla\bm{\phi}\right\rangle\,\mathrm{d}\tau
\end{aligned}
\end{equation}
for any $t\in(0,T]$.
\end{proof}

\begin{proof}[Proof of Theorem \ref{the:important}]
From Lemma \ref{lem:WeinerSeq}, $\tilde{W}_n$ are cylindrical Wiener processes  and as such, we can find a collection of mutually independent 1-D $(\tilde{\mathscr{F}}_t)$-Brownian motions $(\tilde{\beta}_k)_{k\in\mathbb{N}}$ and orthonormal basis $(e_k)_{k\in\mathbb{N}}$ such that $\tilde{W}=\sum_{k\in\mathbb{N}}\tilde{\beta}_k e_k$. We refer the reader to \cite[Lemma 2.1.35, Corollary 2.1.36]{breit2017stoch} for further details.

To show that  $[ (\tilde{\Omega}, \tilde{\mathscr{F}}_t,(\tilde{\mathscr{F}}_t),\tilde{\mathbb{P}}),\hat{\mathbf{U}}, \tilde{W}]$ satisfies \eqref{2dIncom} in the distributional sense, we consider the functionals
\begin{align*}
M(\rho, \mathbf{u}, \mathbf{m})_t   &=  \langle \mathbf{m}(t), \bm{\phi} \rangle   -  \langle \mathbf{m}(0), \bm{\phi} \rangle 
  -  \int_0^t  \langle \mathbf{m} \otimes \mathbf{u}
-\nu  \nabla \mathbf{u}, \nabla\bm{\phi} \rangle\mathrm{d}r
\\
& 
+(\lambda+\nu)\int_0^t  \langle \mathrm{div}\, \mathbf{u}, \mathrm{div}\,\bm{\phi} \rangle\mathrm{d}r
-
\frac{1}{\varepsilon^{2m}}\int_0^t  \langle \rho^\gamma, \mathrm{div}\,\bm{\phi} \rangle\mathrm{d}r
\\
&+\frac{1}{\varepsilon}\int_0^t  \langle \mathbf{e}_3\times \mathbf{m}, \,\bm{\phi}  \rangle\mathrm{d}r
-
\frac{1}{\varepsilon^2}\int_0^t  \langle \rho\nabla G\,, \,\bm{\phi}  \rangle\mathrm{d}r,
\\
N(\rho,  \mathbf{m})_t   &=
\sum_{k\in\mathbb{N}}\int_0^t\langle \mathbf{g}_k(\rho,  \mathbf{m})\,,\, \bm{\phi}\rangle^2\,\mathrm{d}r,
\\
N_k(\rho,  \mathbf{m})_t   &=
\int_0^t\langle \mathbf{g}_k(\rho,  \mathbf{m})\,,\, \bm{\phi} \rangle\,\mathrm{d}r,
\end{align*}
for all $\bm{\phi} =( \underline{\bm{\phi}} ,0)\in C^{\infty}_{c,\mathrm{div}}(\mathcal{O})$ where $ \underline{\bm{\phi}}\in C^{\infty}_{c,\mathrm{div}_h}(\mathbb{R}^2)$. Then by  Section \ref{sec:Coriolis} and integration by parts, we have that 
\begin{equation}
\begin{aligned}
\frac{1}{\varepsilon}\int_0^t  \langle \mathbf{e}_3\times \hat{\varrho}_{\varepsilon_n}\hat{\mathbf{u}}_{\varepsilon_n}, \,\bm{\phi}  \rangle\mathrm{d}r 
&=\frac{1}{\varepsilon}\int_0^t  \big\langle \,\ulcorner \mathbf{e}_3\times \hat{\varrho}_{\varepsilon_n}\hat{\mathbf{u}}_{\varepsilon_n}\urcorner\,, \,\underline{\bm{\phi}}  \big\rangle_h\mathrm{d}r 
=0.
\end{aligned}
\end{equation}
Also  since $\mathrm{div}\,\bm{\phi}=0$, we can use \eqref{staticProblem} and $\hat{r}_{\varepsilon_n}=\frac{\hat{\varrho}_{\varepsilon_n}-\overline{\varrho}_\varepsilon}{\varepsilon^m}$ to show that
 \begin{equation}
 \begin{aligned}
\frac{1}{\varepsilon^2}\int_0^t  \langle  \hat{\varrho}_{\varepsilon_n}\nabla G&\,, \, \bm{\phi}  \rangle\mathrm{d}r 
=
\frac{1}{\varepsilon^2}\int_0^t  \langle  \overline{\varrho}_{\varepsilon}\nabla G\,, \, \bm{\phi}  \rangle\mathrm{d}r 
\\
&+
\frac{1}{\varepsilon^2}\int_0^t  \langle(\hat{\varrho}_{\varepsilon_n}-\overline{\varrho}_\varepsilon)\nabla G\,, \, \bm{\phi}  \big\rangle\mathrm{d}r 
\\
&=
\frac{1}{\varepsilon^{2m}}\int_0^t  \langle  \nabla \overline{\varrho}_{\varepsilon}^\gamma \,, \, \bm{\phi}  \rangle\mathrm{d}r 
+
\varepsilon^{m-2}\int_0^t  \langle\hat{r}_{\varepsilon_n}\nabla G\,, \, \bm{\phi}  \big\rangle\mathrm{d}r 
\\
&=\varepsilon^{m-2}\int_0^t  \langle\hat{r}_{\varepsilon_n}\nabla G\,, \, \bm{\phi}  \big\rangle\mathrm{d}r 
\end{aligned}
\end{equation}
by integration by parts.  Finally since $\langle \mathrm{div}\,\hat{\mathbf{u}}_{\varepsilon_n}\,,\, \mathrm{div}\,\bm{\phi} \rangle= \langle \hat{\varrho}_{\varepsilon_n}^\gamma\,,\, \mathrm{div}\,\bm{\phi} \rangle=0$, we can therefore conclude that for $m>10$,
\begin{align*}
&M\big(\hat{\varrho}_{\varepsilon_n}, \hat{\mathbf{u}}_{\varepsilon_n}, \hat{\varrho}_{\varepsilon_n}\hat{\mathbf{u}}_{\varepsilon_n}\big)_t   =  \big\langle \big(\hat{\varrho}_{\varepsilon_n}\hat{\mathbf{u}}_{\varepsilon_n}\big)(t), \bm{\phi} \big\rangle   -  \big\langle \big(\hat{\varrho}_{\varepsilon_n}\hat{\mathbf{u}}_{\varepsilon_n}\big)(0), \bm{\phi}  \big\rangle 
\\
&-  \int_0^t  \langle \hat{\varrho}_{\varepsilon_n}\hat{\mathbf{u}}_{\varepsilon_n} \otimes \hat{\mathbf{u}}_{\varepsilon_n}
-\nu  \nabla \hat{\mathbf{u}}_{\varepsilon_n}, \nabla \bm{\phi}  \rangle\mathrm{d}r
+
\varepsilon^{m-2}\int_0^t  \langle\hat{r}_{\varepsilon_n}\nabla G\,, \, \bm{\phi}  \big\rangle\mathrm{d}r.
\end{align*}
As a result of \eqref{limDenseAndCentri} and Proposition \ref{prop:Jakubow0}, we can conclude that the last  term above  vanishes $\tilde{\mathbb{P}}$-a.s. in the limit as $\varepsilon\rightarrow0$ since $m>10$.

Now passing to the limit $n \rightarrow \infty$ in the function $M(\cdot)_t$ and keeping in mind \eqref{convectiveConver}  and Proposition \ref{prop:Jakubow0} we obtain\footnote{We denote by $M(\cdot)_{s,t}$, the difference $M(\cdot)_{t}-M(\cdot)_{s}$ and similarly for the other functionals. Also $\textbf{r}_t$ is a continuous map that restrict functions to time $t$ whereas $h$ is a continuous function. See \cite[Section 3]{breit2015incompressible}  for further details.}
\begin{equation}
\begin{aligned}
\label{continuousMartingaleLimit}
\tilde{\mathbb{E}}\,h( \textbf{r}_t\hat{\mathbf{U}}, \textbf{r}_t\tilde{W})\left[M(1, \hat{\mathbf{U}}, \hat{\mathbf{U}})_{s,t}  \right]
=0,
\\
\tilde{\mathbb{E}}\,h( \textbf{r}_t\hat{\mathbf{U}}, \textbf{r}_t\tilde{W})\left[\left[ M(1, \hat{\mathbf{U}}, \hat{\mathbf{U}})^2\right]_{s,t}- N(1, \hat{\mathbf{U}})_{s,t} \right]
=0,
\\
\tilde{\mathbb{E}}\,h( \textbf{r}_t\hat{\mathbf{U}}, \textbf{r}_t\tilde{W})\left[\left[ M(1, \hat{\mathbf{U}}, \hat{\mathbf{U}})\tilde{\beta}_k\right]_{s,t}- N(1, \hat{\mathbf{U}})_{s,t} \right]
=0.
\end{aligned}
\end{equation}
where the $\tilde{\mathbb{P}}$-a.s. limit process satisfies 
\begin{align*}
&M(1, \hat{\mathbf{U}}, \hat{\mathbf{U}})_t   = 
 \langle  \hat{\mathbf{U}}(t), \bm{\phi}  \rangle   -  \langle \hat{\mathbf{U}}(0), \bm{\phi} \rangle 
  -  \int_0^t  \langle \hat{\mathbf{U}}\otimes\hat{\mathbf{U}}
-\nu  \nabla \hat{\mathbf{U}}, \nabla \bm{\phi} \rangle\mathrm{d}r
\\
&= 
\left\langle  \hat{\mathbf{U}}_h(t), \bm{\phi} _h \right\rangle_h   -  \left\langle \hat{\mathbf{U}}_h(0), \bm{\phi} _h \right\rangle_h 
  -  \int_0^t  \left\langle \hat{\mathbf{U}}_h\otimes\hat{\mathbf{U}}_h
-\nu  \nabla_h \hat{\mathbf{U}}_h\,, \nabla_h\bm{\phi} _h \right\rangle_h\mathrm{d}r.
\end{align*}
Eq. \eqref{continuousMartingaleLimit} implies that $M(1, \hat{\mathbf{U}}_h, \hat{\mathbf{U}}_h)$ is an $(\tilde{\mathscr{F}}_t)$-martingale with quadratic and cross variations given by
\begin{align*}
\langle\langle M(1, \hat{\mathbf{U}}_h, \hat{\mathbf{U}}_h) \rangle\rangle= N(1, \hat{\mathbf{U}}_h), 
\quad 
\langle\langle M(1, \hat{\mathbf{U}}_h, \hat{\mathbf{U}}_h), \tilde{\beta}_k \rangle\rangle= N_k(1, \hat{\mathbf{U}}_h)
\end{align*}
respectively and thus we obtain
\begin{align*}
\Big\langle\Big\langle 
M(1, \hat{\mathbf{U}}_h, \hat{\mathbf{U}}_h)
-
\int_0^\cdot
\langle \Phi(1, \hat{\mathbf{U}}_h)\mathrm{d}\tilde{W}, \phi  \rangle
 \Big\rangle\Big\rangle=0.
\end{align*}
It therefore follows that \eqref{2dIncom}$_1$ is satisfied in the distributional sense.
\\
The same can be done for  $( (\tilde{\Omega}, \tilde{\mathscr{F}}_t,(\tilde{\mathscr{F}}_t),\tilde{\mathbb{P}}),\check{\mathbf{U}}, \tilde{W})$.
\end{proof}

\subsection{Pathwise solvability of the limit problem}
\label{subsec:pathwise}
\begin{proof}[Proof of Theorem \ref{thm:mainRo}]
With Theorem \ref{the:important} in hand, we can now use the assumption on the initial law and Theorem \ref{thm:2duniqueness}  to get that $\tilde{\mathbb{P}}$-a.s., $\hat{\mathbf{U}}_h(0)=\check{\mathbf{U}}_h(0)=\mathbf{U}_{h,0}$ and as such, the pair of solutions $\hat{\mathbf{U}}_h$ and $\check{\mathbf{U}}_h$ coincide $\tilde{\mathbb{P}}$-a.s. with law
\begin{align*}
&\mu\left( \left(\hat{\varrho}, \hat{\mathbf{U}},\hat{\mathbf{m}}, \check{\varrho},\check{\mathbf{U}}, \check{\mathbf{m}} \right)\in \chi \, : \,
(\hat{\varrho}, \hat{\mathbf{U}},\hat{\mathbf{m}})
=( \check{\varrho},\check{\mathbf{U}}, \check{\mathbf{m}})
\right)
\\
&
=\tilde{\mathbb{P}}\left((\hat{\varrho}, \hat{\mathbf{U}},\hat{\mathbf{m}})
=( \check{\varrho},\check{\mathbf{U}}, \check{\mathbf{m}})
\right)
=
\tilde{\mathbb{P}}\left(\hat{\mathbf{U}}_h =\check{\mathbf{U}}_h \right)=1.
\end{align*}
We can now use the generalization to quasi-Polish spaces of the Gy\"{o}ngy-Krylov characterization of convergence given in \cite[Theorem 2.10.3]{breit2017stoch} to show that the original sequence $\left(\varrho_\varepsilon,\mathbf{u}_\varepsilon,\ulcorner \mathcal{P}\,(\varrho_\varepsilon\mathbf{u}_\varepsilon)\urcorner \right)$ defined on the original probability space $\left( \Omega,\mathscr{F},\mathbb{P}\right)$ converges in probability to some random variables $(\varrho,\mathbf{U}_h, \mathbf{m}_h)$ in the topology  of $\chi_\varrho \times \chi_\mathbf{u} \times \chi_{\ulcorner\varrho\mathbf{u}\urcorner}$. 

We can now repeat Section \ref{subsec:limits} for $\left(\varrho_\varepsilon,\mathbf{u}_\varepsilon,\ulcorner\mathcal{P}\,(\varrho_\varepsilon\mathbf{u}_\varepsilon)\urcorner \right)$ and finally get that $\mathbf{U}=\mathbf{U}_h$ is a pathwise solution of 
\eqref{2dIncom} according to Definition \ref{def:pathSol}. This repetition is comparatively simpler since we are dealing with the original sequence. As a consequence, it suffices to take only one Wiener process in Lemma \ref{lem:WeinerSeq} for example.
\end{proof}

\subsection*{Acknowledgment}
The author is grateful to D. Breit and E. Feireisl for their many useful discussions, suggestions and corrections.

% ------------------------------------------------------------------------
\end{document}